\documentclass[reqno, 10pt]{amsart}
\usepackage{amssymb,amsmath,amsthm,}
\usepackage{a4wide}
\usepackage{amscd}
\usepackage{amsfonts}
\usepackage{amssymb}
\usepackage{latexsym}
\usepackage{color}
\usepackage{esint}
\usepackage{graphicx}
\usepackage{caption}
\usepackage{subcaption}
\usepackage{amsthm}
\usepackage[colorlinks]{hyperref}
\usepackage[makeroom]{cancel}
\usepackage{mathtools}

\setcounter{MaxMatrixCols}{10}

\oddsidemargin  0.5 pt
\evensidemargin 0.5 pt
\textwidth      6.7 in
\headheight   -.2in
\topmargin     -0.1in
\textheight = 9.3 in

\let\hide\iffalse
\let\unhide\fi

\newtheorem{theorem}{Theorem}[section]

\newtheorem{lemma}[theorem]{Lemma}

\let\p=\partial

\newcommand{\R}{\mathbb{R}}

\renewcommand{\S}{\mathbb{S}}
\newcommand{\T}{\mathbb{T}}
\newcommand{\be}{\begin{equation}}
\newcommand{\bm}{\begin{multline}}
\newcommand{\ee}{\end{equation}}

\newcommand{\Bes}{\begin{eqnarray*}}
	\newcommand{\Ees}{\end{eqnarray*}}
\newcommand{\Be}{\begin{equation}}
\newcommand{\Ee}{\end{equation}}

\pagestyle{plain}  

 \numberwithin{equation}{section}

\def\p{\partial}

\def\R{\mathbb{R}}

\def\B{\begin{equation}}
\def\E{\end{equation}}
\def\BN{\begin{eqnarray*}}
\def\EN{\end{eqnarray*}}

\usepackage{color}

\begin{document}
	\date{ \today
	}
	
	\title{Global Stability of the Boltzmann equation for a polyatomic gas with initial data allowing large oscillations}
	
	\author[G. Ko]{Gyounghun Ko}
	\address[GK]{Center for Mathematical Machine Learning and its Applications(CM2LA), Department of Mathematics, Pohang University of Science and Technology, South Korea}
	\email{gyounghun347@postech.ac.kr}
	
	\author[S. Son]{Sung-jun Son}
	\address[SS]{Center for Mathematical Machine Learning and its Applications(CM2LA), Department of Mathematics, Pohang University of Science and Technology, South Korea}
	\email{sungjun129@postech.ac.kr}

			\begin{abstract}
			In this paper, we consider the Boltzmann equation for a polyatomic gas. We establish that the mild solution to the Boltzmann equation on the torus is globally well-posed, provided the initial data that satisfy bounded velocity-weighted $L^{\infty}$ norm and the smallness condition on the initial relative entropy. Furthermore, we also study the asymptotic behavior of solutions, converging to the global Maxwellian with an exponential rate. A key point in the proof is to develop the pointwise estimate on the gain term of non-linear collision operator for Gr\"{o}nwall's argument. 
		\end{abstract}
	\maketitle
	\tableofcontents

	\section{Introduction}

\textit{The Boltzmann equation} is a fundamental kinetic model for describing collisional particles. The equation provides a statistical description of the dynamic behavior of a dilute gas, capturing the time evolution of the molecular distribution function. 
 The Boltzmann equation for monoatomic gases can occasionally be insufficient for dealing with intricate scenarios such as satellite orbits, contamination, or atmospheric re-entry. Hence, various models, notably the Borgnakke-Larsen model proposed in \cite{BL}, have been introduced to cater to polyatomic gases. These models incorporate internal energy to account for rotational and vibrational motion. The Borgnakke-Larsen model has garnered widespread usage in mathematical research, accommodating non-integer degrees of freedom per particle.
	Although alternative models, see \cite{Be,Bi,MBKK}, have been proposed, the computational expense associated with complex degrees of freedom remains prohibitive. Simplified models with one degree of freedom that have emerged to alleviate computational costs can be found in \cite{LP}. Based on these, Pullin and Kuscer have explored phenomena such as internal energy changes in polyatomic gases. In \cite{BDLP}, Bourgat et al. generalized the model and demonstrated an H-theorem related to the second law of thermodynamics. Additionally, Monte Carlo simulations were conducted to validate the results. Furthermore, for inelastic and elastic collisions, simplified models can be found in \cite{Br,EP}.\\
	\indent In the paper, we concerned with the Boltzmann equation for polyatomic gases: 
		\begin{align} \label{poly be}
		\begin{split}
			\p_{t}F + v\cdot\nabla F =Q(F,F),& \cr
			\hspace{1cm}F(0,x,v,I)=F_0(x,v,I),&
		\end{split}
	\end{align}
	where $F=F(t,x,v,I)$ is the probability density function of polyatomic gases with position $x\in\mathbb{T}^3$, velocity $v\in\R^3$, and internal energy $I \in \R_+$ at time $t\in[0,\infty)$. Here, the internal energy $I$ is related to rotational and vibrational motion . The collision operator $Q(F,G)$ in \eqref{poly be} is defined by
	\begin{align}\label{BLCO}
		\begin{split}
			Q(F,G)&:=\int_{\mathbb{S}^2\times[0,1]^2\times\R^3 \times \R_{+}} B\left(\frac{F'G'_{*}}{(I'I'_{*})^{\delta/2-1}}-\frac{FG_{*}}{(II_{*})^{\delta/2-1}}\right)\cr
			&\quad \times\left(r(1-r)\right)^{\delta/2-1}(1-R)^{\delta-1}R^{1/2}(II_*)^{\delta/2-1}dwdRdrdv_*dI_*,
		\end{split}
	\end{align}
	where we denote the following abbreviations
	\begin{align*}
		F:=F(t,x,v,I), \; G_{*}:=G(t,x,v_{*},I_{*}), \; F':=F(t,x,v',I'), \; \text{and } G'_{*}:=G(t,x,v'_{*},I'_{*}).
	\end{align*}	
	In \eqref{BLCO}, $B=B(v,v_*,I,I_*)$  is  the transition function, throughout this paper, we consider that as the total energy in the relative velocity-center of mass velocity framework with constant angular function , see \cite{GP}
	\begin{align*}
		B:=C\left(\frac{|v-v_*|^2}{4}+I+I_*\right)^\frac{2-\alpha}{2},
	\end{align*}
	where $\alpha\in [0,2)$. In this model, the number of degrees of freedom $\delta\geq 2$ is introduced, representing the relation of specific heat considered at the hydrodynamic Navier-Stokes level, resulting in the following relationship:
\begin{align*}
			\gamma=\frac{\delta+5}{\delta+3}.
	\end{align*} 
\noindent Under an elastic collision, two particles with velocities and internal energies $(v, I)$ and $(v_*, I_*)$ in phase space $\R^3 \times \R_+$ change to $(v', I')$ and $(v'_*, I'_*)$ after the collision. Post-collision velocities and internal energies satisfy the conservation laws of momentum and total energy as follows:
	\begin{align}\label{CL}
		\begin{split}
			v+v_*&=v'+v'_*,\cr
			\frac{|v|^2}{2}+\frac{|v_*|^2}{2}+I+I_*&=\frac{|v'|^2}{2}+\frac{|v'_*|^2}{2}+I'+I'_*.
		\end{split}
	\end{align}
	For convenience, we use notation as below:
	\begin{align}\label{NOTA}
		V:= \frac{|v+v_*|}{2}, \quad V' := \frac{|v' +v'_*|}{2}, \quad u:= v-v_*, \quad \text{and,} \quad u' :=v'-v'_*. 
	\end{align}
	Thus, from the momentum and energy conservation laws in \eqref{CL}, one obtains
	\begin{align*}
		V=V', \quad		\frac{|u|^2}{4}+I+I_*&=\frac{|u'|^2}{4}+I'+I'_* . 
	\end{align*} 
	The collision operator is based on the Borgnakke-Larsen procedure, which states that at post-collision total energy $E$ is distributed into pure kinetic energy and pure internal energy (Figure \ref{fig1}). This is represented by introducing the parameter $R\in[0,1]$ such that 
	\begin{align*}
		\frac{|u'|^2}{4}=RE, \quad I'+I'_*=(1-R)E.  
	\end{align*}
	Additionally, a parameter $r\in[0,1]$ is introduced to distribute the the proportion of total internal energy
		 \begin{align*}
		I'=r(1-R)E , \quad I'_*=(1-r)(1-R)E.
	\end{align*}
	
	\begin{figure}[h]
	\centering
	\includegraphics[scale=0.5]{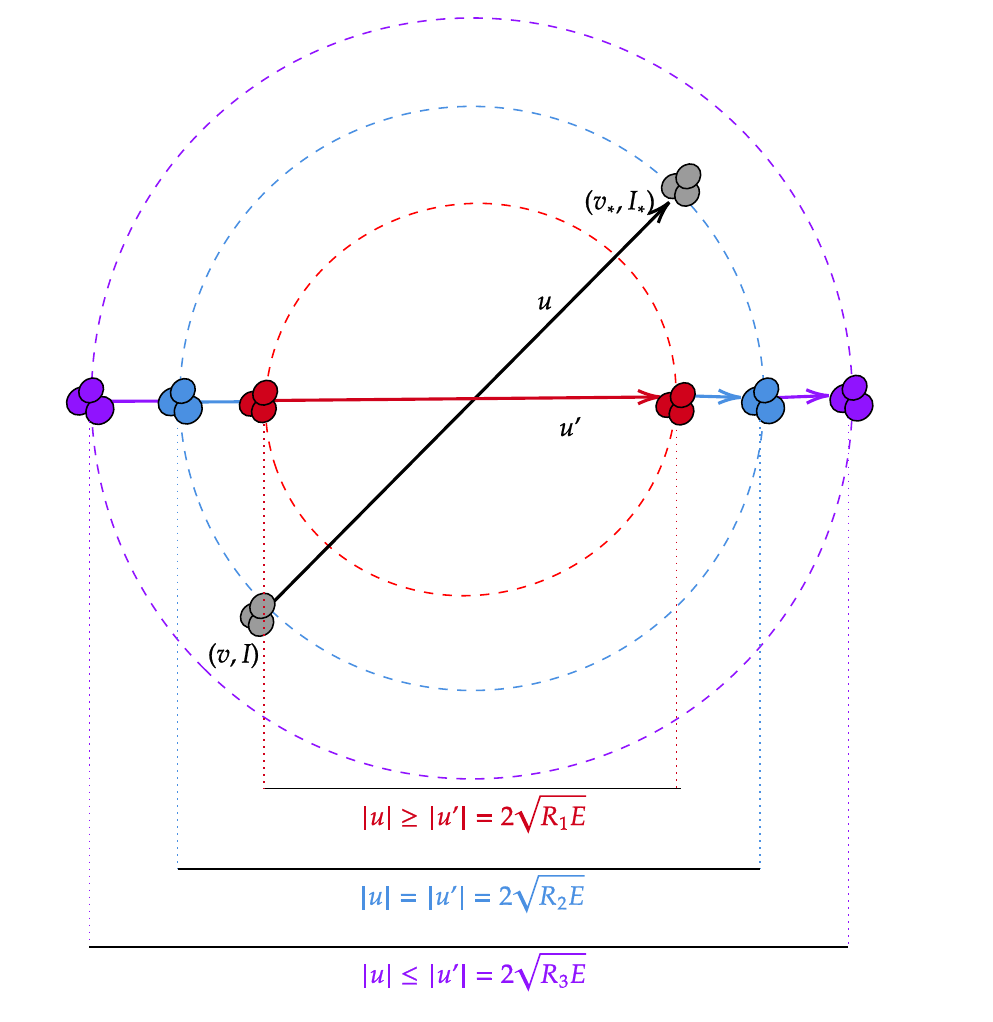}  
	\caption{This figure describes the changing ratio of kinetic energy to internal energy for two polyatomic molecules after a collision. To aid readers' understanding, the phenomenon of increasing internal energy and decreasing kinetic energy ($R_1 < R_2 < R_3$) is illustrated purely from the perspective of kinetic energy.}
	 \label{fig1}
\end{figure}
	\noindent In this paper, we also consider an equivalent form of the collision operator given by \eqref{BLCO}, which was first proposed in \cite{BN2}:
		\begin{align}\label{CO}
			\begin{split}
				Q(F,G)&:=\int_{(\R^3 \times \R_{+})^3} W(v,v_{*},I,I_{*} \vert v',v'_{*},I',I'_{*})\left(\frac{F'G'_{*}}{(I'I'_{*})^{\delta/2-1}}-\frac{FG_{*}}{(II_{*})^{\delta/2-1}}\right) dv_{*}dv'dv'_{*}dI_{*}dI'dI'_{*}\\
				&=Q_+(F,G) -Q_-(F,G).
			\end{split}
		\end{align}
	Here, $W(v,v_{*},I,I_{*} \vert v',v'_{*},I',I'_{*})$ represents the transition probabilities for polyatomic gases: 
		\begin{align*}
			W(v,v_{*},I,I_{*} \vert v',v'_{*},I',I'_{*}) & :=4(II_*)^{\delta/2}\sigma\left(|v-v_*|,\left|\frac{v-v_*}{|v-v_*|}\cdot\frac{v'-v'_*}{|v'-v'_*|}\right|,I,I_*,I',I'_*\right)\frac{|v-v_*|}{|v'-v'_*|}\cr
			&\quad\quad\times \delta_{3} (v+v_{*}-v'-v'_{*}) \delta_1 \left(\frac{\vert v \vert^2}{2} + \frac{ \vert v_{*}\vert^2}{2} +I+I_{*} -\frac{\vert v' \vert^2}{2}-\frac{\vert v'_{*}\vert^2}{2} -I'-I'_{*}\right),
		\end{align*}
		where the scattering cross sections $\sigma$ is given by
		\begin{align*}
			\sigma:=C\frac{\sqrt{\vert u \vert^2+4I+4I_* -4I' -4I'_{*}}}{\vert u \vert\Phi^{\delta +(\alpha-1)/2}}(I'I'_*)^{\delta/2-1}.
		\end{align*}
		Through cancellation by \eqref{CL}, the form of $W(v,v_{*},I,I_{*} \vert v',v'_{*},I',I'_{*})$ is reduced as follows:
		\begin{align} \label{W}
			\begin{split}
				&W(v,v_{*},I,I_{*} \vert v',v'_{*},I',I'_{*})\cr
				&= \frac{4C(II_{*}I'I'_{*})^{\delta/2-1}}{\Phi^{\delta +(\alpha-1)/2}}\delta_{3} (v+v_{*}-v'-v'_{*}) \delta_1 \left(\frac{\vert v \vert^2}{2} + \frac{ \vert v_{*}\vert^2}{2} +I+I_{*} -\frac{\vert v' \vert^2}{2}-\frac{\vert v'_{*}\vert^2}{2} -I'-I'_{*}\right),
			\end{split}
		\end{align}
		where $\Phi$ and $\Phi'$ are defined by 
		\begin{align*}
			\begin{split}
				\Phi=\frac{\vert u \vert^2}{4}+I+I_{*}\quad \text{and}\quad \Phi' =\frac{\vert u' \vert^2}{4} +I'+I'_{*}.
			\end{split}
		\end{align*} 
		The equivalent form can be derived through the following change of variables
		\begin{align*}
			R=\frac{ |v'-v'_*|}{4\Phi},\quad r=\frac{I'}{(1-R)\Phi}, \quad \text{and} \quad  \Phi'=\frac{|u'|^2}{4}+I'+I'_*. 
		\end{align*}
		For more details on this, we refer to \cite{BN2}.\\ 
\indent In the Boltzmann equation for polyatomic gases, the global Maxwellian $M$ satisfying $Q(M,M)=0$ is given by 
\begin{align} \label{mu}
	M = M(v,I) := \frac{I^{\delta/2-1}}{(2\pi)^{3/2} \Gamma(\delta/2)} e^{-\vert v \vert^2/2-I},
\end{align}
where $ \Gamma(s)$ is the  gamma function defined by $ \Gamma(s) := \int_0^{\infty} x^{s-1} e^{-x} dx$. Due to the following collision invariant property
\begin{align} \label{collisional inv}
	\int_{\R^3 \times \R_+} Q(F,F)	\left(\begin{array}{c}1\cr v\cr \frac{|v|^2}{2}+I\end{array}\right) dv dI =0, 
\end{align}
the solution $F(t,x,v,I)$ to the Boltzmann equation for polyatomic gases satisfies the conservation laws of mass, momentum, and total energy, see \cite{BN2}: 
\begin{align} \label{laws}
	\begin{split}
		\left(\begin{array}{c}
			M_I \cr J_I \cr E_I 
		\end{array}\right)
		&:=\int_{\T^3 \times \R^3 \times \R_+}  (F(t,x,v,I)-M(v,I)) 	\left(\begin{array}{c}1\cr v\cr \frac{|v|^2}{2}+I\end{array}\right) dxdvdI \\
		&= 	\int_{\T^3 \times \R^3 \times \R_+}  (F_0(x,v,I)-M(v,I))	\left(\begin{array}{c}1\cr v\cr \frac{|v|^2}{2}+I\end{array}\right) dxdvdI.
	\end{split}
\end{align}
Moreover, the entropy dissipation for \eqref{poly be} can be obtained by \cite{BBBD,BDLP,GP}, known as the $H$-theorem, 
\begin{align*}
	\frac{d}{dt}\int_{\mathbb{T}^3\times\R^3\times\R_+}F(t,x,v,I)\ln F(t,x,v,I)dxdvdI\leq0.
\end{align*}
In the Boltzmann equation for monoatomic gases, there have been studies on its well-posedness theories. DiPerna and Lions \cite{Diperna-Lion} proved the global existence of renormalized solutions for general initial data with finite mass, energy, and entropy. However, the uniqueness of these solutions remains undetermined.  On the other hand, asymptotic behavior of solutions is one of the main subjects for the Boltzmann equation due to the $H$-theorem. We may expect the convergence to the global Maxwellian since the entropy dissipation vanishes at the global Maxwellian. For this issue, Desvillettes and Villani \cite{DV} obtained convergence to the global Maxwellian with an almost exponential rate under global-in-time a priori regularity conditions and a Gaussian lower bound for the solutions.    \\
\indent At the same time, well-posedness theories and asymptotic behavior have been studied based on perturbation framework. When the initial data are sufficiently close to the global Maxwellian, Ukai \cite{Ukai} first constructed the global-in-time solutions to the Boltzmann equation in a periodic box $\T^3$. Moreover, under the perturbation setup, Guo \cite{GuoVPB,GuoVMB} used the energy method in high Sobolev spaces to construct classical solutions without a physical boundary. However, within bounded domains, high-order regularity of solutions may not be expected; see \cite{GKTT2017,Kim2011,KLKRM}. Thus, Guo \cite{Guo10} developed a novel $L^2$-$L^\infty$ bootstrap argument to construct low regularity solutions with certain boundary conditions. Subsequently, under the specular reflection boundary condition, Kim-Lee \cite{KimLee} replaced the analytic condition in \cite{Guo10} with general $C^3$ uniform convex bounded domains. Additionally, for more recent works concerning the boundary value problems, see references \cite{CKLVPB,KimLeeNonconvex,KL_holder,KKL}.   \\ 
\indent Recent mathematical research on the Boltzmann equation for polyatomic gases has been actively ongoing. For instance, in a space homogeneous setting, Gamba and Pavi$\acute{c}$- $\check{C}$oli$\acute{c}$ established the existence and uniqueness for the non-linear case, considering hard potentials for relative speed and internal energy under the extended Grad assumption, see \cite{GP}.  On the other side, in the space inhomogeneous case, the study on the linearized collision operator in the perturbation framework near the global Maxwellian can be found in \cite{BN2}. The coercivity estimate of the collision frequency and compactness of the integral operator for the monatomic case were derived in \cite{DH,glassey}. On the other hand, for polyatomic gases, the properties of the linearized operator corresponding to the collision operator \eqref{BLCO} were discussed in \cite{BST1, BST2} under additional assumptions on the transition function. In \cite{BN2}, they assumed the form \eqref{CO} of the collision operator and dealt with the coercivity and compactness of the linearized operator. Furthermore, in \cite{BN1}, we can find results for the discrete internal energy model. Interested readers may refer to \cite{CSC,EG} for related models. For the replaced model with a relaxational operator like the BGK model, interested readers refer to \cite{BCRY,BS,HRY,PY2,PY1,SY,Yun}. \\ 
\indent In those previous works related to the perturbation theory, the difference between initial data and the global Maxwellian must be sufficiently small. To the best of our knowledge, well-posedness theory of the Boltzmann equation with large-data class of initial data is still unsolved. Recently, \cite{DHWY2017} obtained the large-amplitude solution to the Boltzmann equation without smallness of $L^\infty$ norm. However, they need an additional smallness assumption on the relative entropy and $L^1_x L^\infty_v$ for initial data. For the boundary value problems and  types of models, there are results that have constructed large-amplitude solutions in a similar sense; see \cite{BKLY_BGK,CaoJFA,DKL2023,DW2019,KLP2022}. \\ 
\indent In this paper, in a periodic box $\T^3$, we construct large-amplitude solutions to the Boltzmann equation modeling polyatomic gases where only the initial relative entropy is small, relaxing the condition of smallness of the $L^\infty$ norm of the initial data assumed in \cite{DLpoly}. Moreover, we obtain the convergence to the global Maxwellian with an exponential rate.\\
\indent Let $F(t,x,v,I)= M(v,I) + \sqrt{M(v,I)} f(t,x,v,I)$. Then, the Boltzmann equation \eqref{poly be} is reformulated as 
\begin{align} \label{re BE}
	\p_t f + v\cdot \nabla_x f + Lf = \Gamma(f,f).
\end{align}
The linear operator $L$ is defined by 
\begin{align*}
	Lf &:= -\frac{1}{\sqrt{M}}\left(Q(M,\sqrt{M}f) +Q(\sqrt{M}f,M)\right)=\nu(v,I) f - Kf,
\end{align*} 
where the collision frequency $\nu$ and the integral operator $K$ are given by 
\begin{align}
	\nu(v,I)&:= \frac{1}{\sqrt{M}}Q_{-}(\sqrt{M},M)=\int_{(\R^3 \times \R_{+})^3} W(v,v_{*},I,I_{*} \vert v',v'_{*},I',I'_{*})\frac{M_*}{(II_*)^{\delta/2-1}} dv_* dv' dv'_* dI_* dI' dI'_*,  \label{collision frequency}\\
	Kf&:= \frac{1}{\sqrt{M}}\left(Q(M,\sqrt{M}f)  + Q_+ (\sqrt{M}f,M)  \right) \nonumber \\
	&= \frac{1}{\sqrt{M}}\int_{(\R^3 \times \R_{+})^3} W(v,v_{*},I,I_{*} \vert v',v'_{*},I',I'_{*})\left (\frac{M'\sqrt{M'_*}}{(I'I'_*)^{\delta/2-1}}f'_* +(\frac{M'_*\sqrt{M'}}{(I'I'_*)^{\delta/2-1}}f' \right)dv_* dv' dv'_* dI_* dI' dI'_* \nonumber \\
	& \quad -\int_{(\R^3 \times \R_{+})^3} W(v,v_{*},I,I_{*} \vert v',v'_{*},I',I'_{*})\frac{\sqrt{MM_*}}{(II_*)^{\delta/2-1}}f_* dv_* dv' dv'_* dI_* dI' dI'_* \nonumber \\
	&=K_2 f - K_1 f.  \label{Kf}
\end{align}
Note that the collision frequency $\nu(v,I)$ is equivalent to $(1+|v| + \sqrt{I})^{2-\alpha}$; see \cite{DLpoly}. In other words, there exist positive constants $C_0,C_1,$ and $\nu_0$ such that 
\begin{align*}
	\nu_0 \leq C_0(1+|v| + \sqrt{I})^{2-\alpha} \leq \nu(v,I) \leq C_1 (1+|v| + \sqrt{I})^{2-\alpha}, \quad \alpha \in [0,2). 
\end{align*}
The non-linear term $\Gamma(f,f) = \Gamma_+ (f,f) -\Gamma_- (f,f)$ is given by 
\begin{align*}
	\Gamma_+ (f,f) = \frac{1} {\sqrt{M}} Q_+(\sqrt{M}f, \sqrt{M}f) , \quad \Gamma_- (f,f) = \frac{1}{\sqrt{M}} Q_- (\sqrt{M}f , \sqrt{M}f).
\end{align*}
\subsection{Main results and scheme of the proof}
\begin{theorem} \label{main thm}
	Define the velocity weight function
	\begin{align} \label{weight}
		w(v,I): = (1+ \vert v \vert + \sqrt{I})^{\beta}, \quad \beta > 7. 
	\end{align}
	Then, for any $M_0>0$, there exists $\varepsilon = \varepsilon(M_0)>0$ such that if initial data satisfy that $F_0 (x,v,I) = M(v,I)+ \sqrt{M(v,I)} f_0 (x,v,I) \geq 0$, the conservation laws \eqref{laws} with $(M_I,J_I,E_I)=(0,0,0)$, and 
	\begin{align*}
		\| wf_0 \|_{L^{\infty}} \leq M_0, \quad \mathcal{E}(F_0) \leq \varepsilon, 
	\end{align*}
	then the initial value problem \eqref{poly be} on the polyatmoic Boltzmann equation admits a unique global-in-time mild solution $F(t,x,v,I)= M(v,I) + \sqrt{M(v,I)} f(t,x,v,I) \geq 0 $ satisfying 
	\begin{align*}
		\| wf(t) \|_{L^{\infty}} \leq C_{M_0} e^{-\vartheta t}, \quad \forall t \in [0,\infty),
	\end{align*}
	where $\vartheta>0$ is a positive constant. 
\end{theorem}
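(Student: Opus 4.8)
The plan is to run the $L^{2}$--$L^{\infty}$ bootstrap of Guo, in the large-amplitude form of Duan--Huang--Wang--Yang \cite{DHWY2017}, adapted to the Borgnakke--Larsen operator \eqref{CO}. We work with the perturbation $f$ of \eqref{re BE}, write $h:=wf$, and let $\mathbf{P}$ denote the $L^{2}_{v,I}$--projection onto the collision invariants $\sqrt{M},\,\sqrt{M}v,\,\sqrt{M}(\tfrac{|v|^{2}}{2}+I)$. A first point is that the two smallness hypotheses are not independent: from $\|wf_{0}\|_{L^{\infty}}\le M_{0}$, the elementary convexity inequality $a\ln(a/b)-a+b\gtrsim (a-b)^{2}/(a+b)$, and the fact that $\{F_{0}>2M\}$ is confined to a bounded set by the $L^{\infty}$ bound, one obtains $\|f_{0}\|_{L^{2}_{x,v,I}}^{2}\lesssim_{M_{0}}\mathcal{E}(F_{0})\le\varepsilon$, so the initial $L^{2}$ norm is already $\mathcal{O}_{M_{0}}(\sqrt{\varepsilon})$; this is the quantity that will propagate. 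It then suffices to prove the a priori bound: any nonnegative mild solution $F=M+\sqrt{M}f$ of \eqref{poly be}--\eqref{re BE} on $[0,T]$ with the conservation laws \eqref{laws}, $\|wf_{0}\|_{L^{\infty}}\le M_{0}$ and $\mathcal{E}(F_{0})\le\varepsilon$ satisfies $\sup_{0\le t\le T}e^{\vartheta t}\|wf(t)\|_{L^{\infty}}\le C_{M_{0}}$ with $C_{M_{0}},\vartheta$ independent of $T$; combined with a short-time existence theorem, continuation yields the global solution. Local existence is the Duhamel iteration for the transport equation linearized at $F^{n}$, with loss frequency $\mathcal{L}_{-}[F^{n}](v,I)$ (obtained by replacing $M_{*}$ by $F^{n}_{*}$ in \eqref{collision frequency}), which is $\ge 0$, and nonnegative source $Q_{+}(F^{n},F^{n})$; hence every iterate is $\ge 0$ (the transport semigroup preserves positivity), while the pointwise gain bound below keeps $\|wf^{n}(t)\|_{L^{\infty}}\le 2M_{0}$ and gives contraction on a short interval. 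Uniqueness is a Gr\"onwall estimate on $\|w(f_{1}-f_{2})(t)\|_{L^{\infty}}$.

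I would first prove exponential $L^{2}$ decay. The $H$-theorem gives that $\mathcal{E}(F(t))$ is nonincreasing, so $\mathcal{E}(F(t))\le\varepsilon$ for all $t$ and $\int_{0}^{\infty}D(F(s))\,ds\le\varepsilon$, where $D(F)\ge 0$ is the entropy production; under the running $L^{\infty}$ bound the latter controls the microscopic part, $D(F)\gtrsim_{M_{0}}\|(\mathbf{I}-\mathbf{P})f\|_{\nu}^{2}$. Combining the weighted $L^{2}$ energy inequality for \eqref{re BE}, the macroscopic coercivity on $\mathbb{T}^{3}$ (a Kawashima-type interaction functional, bounded by $\|f\|_{L^{2}}^{2}$) coming from $(M_{I},J_{I},E_{I})=(0,0,0)$, and the cancellation $\langle\Gamma(g,g),\mathbf{P}h\rangle=0$ valid for all $g,h$ — a direct consequence of the collision invariance \eqref{collisional inv}, which removes the only nonperturbative piece of the nonlinear term — one builds an energy $\mathcal{G}(t)\simeq\|f(t)\|_{L^{2}}^{2}$ for which, adding the entropy as an extra Lyapunov term (coercive, $\mathcal{E}(F)\simeq_{M_{0}}\|f\|_{L^{2}}^{2}$, under the $L^{\infty}$ bound) and choosing constants with $\varepsilon=\varepsilon(M_{0})$ small, $\frac{d}{dt}\big(\mathcal{G}+K\mathcal{E}(F)\big)+\lambda\big(\mathcal{G}+K\mathcal{E}(F)\big)\le 0$. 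Hence $\|f(t)\|_{L^{2}}\le C_{M_{0}}\sqrt{\varepsilon}\,e^{-\lambda t}$.

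The $L^{\infty}$ estimate is then closed along backward characteristics $s\mapsto Z(s;t,z)$. From the mild form of \eqref{re BE},
\begin{equation*}
h(t,z)=e^{-\int_{0}^{t}\mathcal{L}_{-}[F]\,d\tau}\,h_{0}+\int_{0}^{t}e^{-\int_{s}^{t}\mathcal{L}_{-}[F]\,d\tau}\,\big(wKf+w\Gamma_{+}(f,f)\big)(s)\,ds ,
\end{equation*}
the key structural fact is $\mathcal{L}_{-}[F]=\nu+\mathcal{L}_{-}[\sqrt{M}f]=\mathcal{L}_{-}[M+\sqrt{M}f]\ge 0$ since $F\ge 0$, so the exponential factors are $\le 1$: positivity of $F$ is the device that replaces the smallness of $\|wf_{0}\|_{L^{\infty}}$ assumed in \cite{DLpoly}. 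The $K$-term is handled by Guo's double-Duhamel trick — substituting the formula once more and using the Grad-type kernel bounds for $K$ in \eqref{Kf} — after which it is dominated by $\int_{0}^{t}e^{-c(t-s)}\|f(s)\|_{L^{2}}\,ds$ up to a genuinely higher-order remainder, and this integral is small by the $L^{2}$ decay. The gain term is controlled by the pointwise estimate
\begin{equation*}
\big|w\,\Gamma_{+}(f,g)(t,x,v,I)\big|\le C\,\|wf(t)\|_{L^{\infty}}\,\|g(t,x)\|_{L^{2}_{v,I}},\qquad \beta>7 ,
\end{equation*}
after which one further Duhamel iteration converts the pointwise $L^{2}_{v,I}$ factor into the exponentially decaying $L^{2}_{x,v,I}$ norm, so that $\int_{0}^{t}\|w\Gamma_{+}(f,f)(s)\|_{L^{\infty}}\,ds\le \big(\sup_{s}\|wf(s)\|_{L^{\infty}}\big)\cdot\mathcal{O}_{M_{0}}(\sqrt{\varepsilon})$. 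Feeding these into the mild formula under the a priori hypothesis $\sup_{[0,T]}\|wf\|_{L^{\infty}}\le\bar{M}$ gives $\sup_{[0,T]}\|wf\|_{L^{\infty}}\le M_{0}+C_{M_{0}}\sqrt{\varepsilon}\,(\bar{M}+\bar{M}^{2})$; taking $\bar{M}=4M_{0}$ and $\varepsilon=\varepsilon(M_{0})$ small closes the continuity argument with a $T$-independent bound $\|wf(t)\|_{L^{\infty}}\le 2M_{0}$. Finally, the exponential rate is obtained by re-entering the mild formula with the uniform $L^{\infty}$ bound and the $L^{2}$ decay in hand: one may use $\nu$ in the exponent and iterate Duhamel so that the $K$- and nonlinear contributions are bounded by the exponentially small $L^{2}$ norm; most cleanly, the spreading-of-positivity property of $Q_{+}$ furnishes a Gaussian lower bound $F(t)\gtrsim_{M_{0}}M^{\kappa}$ for $t\ge t_{\ast}$, hence $\mathcal{L}_{-}[F]\gtrsim (1+|v|+\sqrt{I})^{2-\alpha}$, which turns the Duhamel semigroup into an exponential contraction and yields $\|wf(t)\|_{L^{\infty}}\le C_{M_{0}}e^{-\vartheta t}$.

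The main obstacle — and, as the abstract states, the technical heart of the paper — is the pointwise gain estimate for the polyatomic operator. Unlike the monatomic case, $Q_{+}$ in \eqref{CO} carries the distributions $\delta_{3},\delta_{1}$, the singular factor $\Phi^{-\delta-(\alpha-1)/2}$, and the internal-energy weights $(II_{*}I'I'_{*})^{\delta/2-1}$. To reach a bound by $\|wf\|_{L^{\infty}}\|g\|_{L^{2}}$ one must pass to the $(\omega,R,r)$ representation \eqref{BLCO} (or a Carleman-type slicing of the momentum hyperplane $\{v+v_{*}=v'+v'_{*}\}$ intersected with the energy sphere), carry out the changes of variables $R=|v'-v'_{*}|^{2}/(4\Phi)$ and $r=I'/((1-R)\Phi)$, integrate out the Dirac masses, and show that the resulting kernel $k\big((v,I),(v',I')\big)$, after multiplication by the weight ratio $w(v,I)/w(v',I')$, has Gaussian-type decay in $(v',I')$ and is integrable uniformly in $(v,I)$; the threshold $\beta>7$ in \eqref{weight} is exactly what survives this loss of one power of the weight together with the extra internal-energy variable. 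Establishing this estimate, along with its companions $|\mathcal{L}_{-}[\sqrt{M}f]|\le C\nu\,\|wf\|_{L^{\infty}}$ and the Grad-type kernel bounds for $K$, is where the bulk of the work lies; the remaining steps are a careful adaptation of the large-amplitude framework of \cite{DHWY2017} to the present model.
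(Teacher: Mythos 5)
Your proposal shares two of the paper's key structural ideas — the operator $\mathcal{R}(f)=\nu+\Gamma_-(f,f)$, whose nonnegativity follows from $F\geq 0$, and the pointwise gain estimate for $w\Gamma_+$ (Lemma \ref{GE}) as the main new tool — but the route from there to the theorem is genuinely different and, in two places, not yet sound.

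First, the paper does not prove exponential $L^2$ decay at all; it uses only the pointwise-in-time relative entropy bound $\mathcal{E}(F(t))\leq\mathcal{E}(F_0)$ (Lemma \ref{relative entropy lemma}), plugged in locally on bounded velocity/energy regions after a change of variables, while the high-velocity tails are handled by the $L^\infty$ bound and the kernel estimates in Lemma \ref{K esti}. Your proposed $L^2$ decay via a Kawashima-type functional has a gap: the cubic term $\langle\Gamma(f,f),(\mathbf{I}-\mathbf{P})f\rangle$ produces a contribution of size $\bar{M}\,\|f\|_{L^2}\|(\mathbf{I}-\mathbf{P})f\|_\nu$ that cannot be absorbed by the microscopic dissipation when $\bar{M}$ is large. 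Invoking the entropy as an extra Lyapunov functional does not repair this cleanly because $\mathcal{E}(F)\simeq_{M_0}\|f\|_{L^2}^2$ is not true pointwise in phase space for large data: Lemma \ref{relative entropy lemma} gives only $|f|^2$-control on $\{|F-M|\leq M\}$ and $\sqrt{M}|f|$-control on $\{|F-M|>M\}$, and closing this into a global two-sided equivalence would bring a velocity-dependent factor like $e^{2N}/\lambda_2^{(\delta-2)/4}$ (cf. \eqref{II2}), not a constant $C_{M_0}$. The paper avoids the whole issue by never needing a global $L^2$ decay rate.

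Second, your proposal uses only $\mathcal{R}(f)\geq 0$, which makes the Duhamel weight bounded but not decaying. To recover exponential decay you invoke either the $L^2$ decay (problematic, as above) or a spreading-of-positivity Gaussian lower bound on $F$. For the polyatomic Borgnakke--Larsen operator no such lower bound is established in the cited literature, so this step is an unsupported black box. The paper instead proves in Lemma \ref{Rf est} the quantitative lower bound $\mathcal{R}(f)\geq\frac{1}{2}\nu$ on $[\tilde{t},T_0]$ via its own Duhamel/relative-entropy argument, which then makes the semigroup factor decay like $e^{-\frac{\nu_0}{2}(t-\tilde{t})}$; it runs the Gr\"onwall for $\|wf(t)\|_{L^\infty}$ on the finite interval $[0,T_0]$, shows $\|wf(T_0)\|_{L^\infty}<\kappa$, and then hands off to the small-data global theorem (Theorem \ref{small global}) for $t\geq T_0$. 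This two-stage structure — large-data a priori estimate on a finite window, then small-data theory — is where the true exponential rate ultimately comes from.

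Finally, a technical point: your claimed gain estimate $|w\Gamma_+(f,g)|\leq C\|wf\|_{L^\infty}\|g\|_{L^2_{v,I}}$ with an unweighted $L^2$ norm is stronger than what the paper establishes, since Lemma \ref{GE} produces the weighted integral $\int(1+|v_*|+\sqrt{I_*})^{-2\beta+8}|wf|^2=\int(1+|v_*|+\sqrt{I_*})^{8}|f|^2$, whose extra polynomial weight is precisely why the paper splits into a large-velocity region (killed by $N^{-1/2}$ and $L^\infty$) and a bounded one (killed by the change of variables and relative entropy). This split is what makes the rest of the estimate work, and skipping it by asserting an unweighted bound is not justified.
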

Now, let us introduce the scheme of the proof of the above main theorem. For the large-amplitude problem, it is hard to construct the global $L^{\infty}$ bound for solutions due to the non-linearity. In particular, although the nonlinear term $\Gamma$ could be treated as (see \cite{DLpoly})
\begin{align*}
	| \Gamma (f,f)(t,x,v,I) | \lesssim \nu(v,I) || f (t) ||_{\infty}^2, 
\end{align*}
we may fail to obtain the $L^{\infty}$ boundedness for solutions without the smallness assumption on the $L^{\infty}$ norm. To overcome this difficulty, we develop the pointwise estimate on the nonlinear term $\Gamma_+$ (see Lemma \ref{GE}) inspired by the nonlinear estimate for monatomic gases; see references \cite{DHWY2017,DKL2023,DW2019,KLP2022}. Using the change of variables and careful analysis for the internal energy used in \cite{BN2,DLpoly}, we obtain the nonlinear estimate 
\begin{align*}
	|w(v) \Gamma_+ (f,f)(t,x,v,I)| \lesssim ||wf(t)||_{L^{\infty}} \left( \int_{\R^3 \times \R_+} (1+|v_*| + \sqrt{I_*})^{-2\beta+8} |wf(t,x,v_*,I_*)|^2 dv_* dI_*\right)^{1/2}.
\end{align*} 
To apply the above estimate, we derived a new operator $\mathcal{R}(f)$ combining with $\nu(v,I)(f)$ and $\Gamma_-(f,f)$, leaving only the nonlinear term $\Gamma_+(f,f)$ in the source term. In other words, the operator $\mathcal{R}(f)$ is defined by 
\begin{align*}
	\mathcal{R}(f)  :=  \int_{(\R^3 \times \R_+)^3} W(v,v_*,I,I_*| v',v'_*,I',I'_*) \left( M_* + \sqrt{M_*} f _*\right)  dv_*dv'dv'_* dI_* dI'dI'_* . 
\end{align*}
 Whenever $\mathcal{R}(f)$ also has a uniform positive lower bound like the collision frequency (see Lemma \ref{Rf est}), we can derive the Gr\"{o}nwall inequality through double Duhamel iteration as follows: 
 \begin{align*}
 	|w(v,I)f(t,x,v,I)| &\lesssim C_{f_0} \left(1+ \int_0^t || wf(s) ||_{L^{\infty}} ds \right)e^{-t}\\
 	&\quad +C(\bar{M}) \int_0^t \int_0^s e^{-(t-s)}e^{-(s-\tau)}\iint_{|\eta| \leq N, |\eta_*|\leq N, L \leq N, L_* \leq N} | wf(\tau,X(\tau), \eta_*, L_*)|,
 \end{align*} 
 where $X(\tau):= x- v(t-s)- \eta (s-\tau)$ and $\bar{M}$ is a priori bound $\sup_{0\leq t \leq T_0} ||wf(t)||_{L^{\infty}} \leq \bar{M}$. 
 Using the change of variables and controlling $L^p$ by initial relative entropy (Lemma \ref{relative entropy lemma}), we close a priori estimate under the smallness condition on the initial relative entropy. \\
 
\noindent \textbf{Notations.} Throughout this paper, $\Vert \cdot \Vert_{L^{\infty}}$ denotes the $L^{\infty}(\T^3_{x} \times \R^3_{v})$-norm. For convenience of notation, we have used the abbreviation $f=f(t,x,v,I)$, $f_*= f(t,x,v_*,I_*)$, $f' = f(t,x,v',I')$ and $f'_* = f(t,x,v'_*, I'_*)$. Also, for the global Maxwellian $M$, we have abbreviated $M =M(v,I)$, $M_* = M(v_*, I_*)$, $M' = (v', I')$ and $M'_* = M(v'_*, I'_*)$. 
\section{Preliminaries}
\subsection{The operator $K$} 
\begin{lemma} \cite{DLpoly} \label{K esti}
	Let $K=K_2-K_1$ be defined in \eqref{Kf}. There exist kernels $k_i = k_i(v,v_*,I,I_*)$ such that 
	\begin{align*}
		K_i f(t,x,v,I) = \int_{\R^3 \times \R_+} k_i (v,v_*,I,I_*) f(t,x,v_*,I_*) dv_* dI_*, \quad i =1,2.  
	\end{align*}
	If we define the kernel $k= k_2 - k_1$, the velocity weight function  $w=w(v,I) = (1+|v| + \sqrt{I})^{\beta}$ in \eqref{weight}, and 
	\begin{align*}
		k_w = k_w (v,v_*,I,I_*) =  k(v,v_*,I,I_*) \frac{w(v,I)}{w(v_*,I_*)},
	\end{align*}
	then we have the following estimate
	\begin{align} \label{kw esti}
		\int_{\R^3 \times \R_+} k_w (v,v_*,I,I_*) e^{\varepsilon \vert v-v_* \vert^2} (1+I_*)^m dv_* dI_* \leq \frac{C_{\varepsilon,m}}{1+ \vert v \vert +I^{1/4}},
	\end{align}
	for $0\leq \varepsilon \leq 1/64$ and $ 0 \leq m \leq 1/8$. Moreover, we have 
	\begin{align} \label{kw_l2}
		\sup_{(v,I) \in \R^3 \times \R_+} \int_{\R^3 \times \R_+} |k_w (v,v_*,I,I_*)|^2 dv_* dI_* < \infty. 
	\end{align}
	\end{lemma}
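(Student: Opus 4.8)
This lemma is the polyatomic analogue of Grad's classical decomposition and estimates for the linearized Boltzmann kernel, and it is established in \cite{DLpoly}; here is the scheme I would follow to prove it.

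\textbf{Step 1 (explicit kernel representations).} Starting from the formulas \eqref{Kf} for $K_1$ and $K_2$ together with the reduced transition probability \eqref{W}, I would use the two Dirac masses $\delta_3(v+v_*-v'-v'_*)$ and $\delta_1(\cdots)$ to integrate out the primed variables. For $K_1$ this is immediate: integrating $W$ against $\delta_3$ eliminates $v'_*$, and the energy $\delta_1$ then fixes one remaining scalar, producing
\[
k_1(v,v_*,I,I_*) = \frac{\sqrt{M M_*}}{(I I_*)^{\delta/2-1}} \cdot \frac{1}{\sqrt M}\int W\, dv' dv'_* dI' dI'_*,
\]
which, using the equivalence $\nu \sim (1+|v|+\sqrt I)^{2-\alpha}$ noted after \eqref{Kf}, is bounded by a polynomial times $e^{-(|v|^2+|v_*|^2)/4 - (I+I_*)/2}$ and hence is harmless. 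For $K_2$ one must instead perform the genuine change of variables — the polyatomic Carleman-type parametrization built from the Borgnakke--Larsen variables $R,r,\Phi'$ used in \cite{BN2,DLpoly} — to rewrite the gain integral over $(v',v'_*,I',I'_*)$ as an integral against $f_*$ over $(v_*,I_*)$; after the internal-energy integrations over the simplices $\{I'+I'_*=(1-R)\Phi\}$ one is left with $k_2$ expressed as an integral over a hyperplane perpendicular to $v-v_*$.

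\textbf{Step 2 (pointwise bound on $k_2$).} The output of Step 1 should be the polyatomic version of Grad's estimate:
\[
|k_2(v,v_*,I,I_*)| \lesssim \frac{(1+|v|+|v_*|)^{-1}}{|v-v_*|}\, e^{-c|v-v_*|^2 - c\frac{(|v|^2-|v_*|^2 + 2(I-I_*))^2}{|v-v_*|^2}}\times \big(\text{internal-energy weight}\big),
\]
where the internal-energy weight carries Gaussian-type decay in $I,I_*$ coming from the Maxwellians. The essential features are the $L^1_{\mathrm{loc}}(\R^3)$-integrable singularity $|v-v_*|^{-1}$ and the exponential concentration near the collision manifold. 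This is where the bulk of the work lies: tracking the powers $(II_*I'I'_*)^{\delta/2-1}\Phi^{-\delta-(\alpha-1)/2}$ through the change of variables and bounding them uniformly, and verifying the quadratic-in-$(|v|^2-|v_*|^2)$ term in the exponent (which in the polyatomic setting is shifted by the internal-energy gap).

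\textbf{Step 3 (absorbing the weight and integrating).} Multiplying by $w(v,I)/w(v_*,I_*) = (1+|v|+\sqrt I)^\beta (1+|v_*|+\sqrt{I_*})^{-\beta}$, I would use that on the effective support $|v|\sim|v_*|$ unless $|v-v_*|$ is large, so the weight ratio is dominated by $e^{c'|v-v_*|^2}$ for an arbitrarily small $c'>0$; choosing $\varepsilon\le 1/64$ and $m\le 1/8$ small enough, the extra factors $e^{\varepsilon|v-v_*|^2}(1+I_*)^m$ together with the weight ratio are all absorbed by the Gaussian in the kernel, leaving a kernel still of Grad type with a slightly smaller constant. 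Then I would split the $v_*$-integral into $\{|v_*|\le |v|/2\}$, where the kernel is exponentially small in $|v|$, and $\{|v_*|>|v|/2\}$, where one integrates $|v-v_*|^{-1} e^{-c(|v|^2-|v_*|^2)^2/|v-v_*|^2}$ over the relevant slab — a standard computation giving the gain $(1+|v|+I^{1/4})^{-1}$ — while the $I_*$-integration is absorbed by the Gaussian decay of the internal-energy weight; this yields \eqref{kw esti}. The $L^2$ bound \eqref{kw_l2} follows from the same pointwise estimate: $|k_w|^2$ has only an $|v-v_*|^{-2}$ singularity, still integrable in $\R^3$, and the Gaussian decay gives a finite supremum.

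\textbf{Main obstacle.} The principal difficulty is Steps 1–2 for the gain kernel $k_2$: carrying out the Borgnakke--Larsen/Carleman change of variables while keeping exact track of the Jacobian and of the internal-energy factors, and then reorganizing the resulting expression into a form with manifest Gaussian concentration and an integrable velocity singularity uniformly in $I, I_*$. Once the Grad-type pointwise bound is in hand, Step 3 is routine and proceeds exactly as in the monatomic theory \cite{Guo10}.
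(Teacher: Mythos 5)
Your overall scheme is the right one and matches the approach of \cite{DLpoly} and of the paper, but be aware that the paper itself does not reprove \eqref{kw esti} — it cites \cite{DLpoly} for that estimate and only establishes the $L^2$ bound \eqref{kw_l2}. For \eqref{kw_l2} the paper uses exactly your strategy (pointwise kernel bound, then square and integrate, noting $|v-v_*|^{-2}$ is locally integrable in $\R^3$), so at that level you are aligned.

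There are two places where your sketch is thinner than what is actually needed, and they are precisely the genuinely polyatomic content. First, your formula for $k_1$ has a spurious extra factor $\tfrac1{\sqrt M}$; from \eqref{Kf} one reads off directly $k_1=\int W\,\tfrac{\sqrt{MM_*}}{(II_*)^{\delta/2-1}}\,dv'\,dv'_*\,dI'\,dI'_*$, and the point is not simply that this is "polynomial times Gaussian" but that the prefactor $(II_*I'I'_*)^{\delta/2-1}\Phi^{-\delta-(\alpha-1)/2}$ must be absorbed: the paper does this by extracting a lower bound $\Phi^\delta\ge (I_*)^{\delta/4-1/4}(I'I'_*)^{3\delta/8+1/8}$ (eq.\ \eqref{Phi esti}), which renders the $I',I'_*$ integration convergent because $\delta/8-9/8>-1$ when $\delta\ge 2$ and leaves behind the $(I_*)^{-1/4}$ factor that is then square-integrable against the Gaussian. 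Second, for $k_2$ the paper does not rederive the Grad-type bound with a $\bigl(|v|^2-|v_*|^2+2(I-I_*)\bigr)^2/|v-v_*|^2$ term in the exponent; it quotes from \cite{DLpoly} a pointwise bound of the form $\tfrac{C}{|u|}e^{-|u|^2/16}$ times an explicit internal-energy integral weighted by a four-case function $\phi(I,I_*,I',I'_*)$ depending on whether $I\lessgtr1$ and $I_*\lessgtr1$, and then computes that integral casewise to obtain $C\bigl(\chi_{\{I_*\le1\}}+I_*^{-5/4}\chi_{\{I_*\ge1\}}\bigr)$ before squaring. Your "tracking the powers ... and bounding them uniformly" is an honest flag, but this casewise decomposition — and in particular why the exponents work out to be square-integrable in $I_*$ uniformly in $(v,I)$ — is the step one cannot gloss: it is where $\delta\ge2$ enters and where the $I^{1/4}$ in the denominator of \eqref{kw esti} ultimately originates, and nothing in your sketch pins it down.
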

	\begin{proof}
	Since the detailed proof of the kernel $k_w$ estimate \eqref{kw esti} is provided in \cite{DLpoly}, we now prove \eqref{kw_l2} which means $L^2$ boundedness of the kernel $k_w$.  From the definition of the operator $K$ in \eqref{Kf}, the kernel $k_1$ is expressed by
	\begin{align} \label{k1 esti}
		k_1(v,v_*,I,I_*) = \int_{(\R^3)^2 \times (\R_+)^2} W(v,v_*,I,I_* | v',v'_* , I' ,I'_*) \frac{\sqrt{MM_*}}{(II_*)^{\delta/2-1}} dv'dv'_* dI' dI'_* .
	\end{align}
	Using \eqref{W}, we directly deduce that 
	\begin{align}\label{k1 esti2}
	\begin{split}
		k_1(v,v_*,I,I_*)  &\leq C \int_{(\R^3)^2 \times (\R_+)^2} e^{-\frac{|v|^2}{4}-\frac{|v_*|^2}{4}-\frac{I}{2}-\frac{I_*}{2}} \frac{1}{\Phi^{\delta +(\alpha-1)/2}}(II_{*})^{\delta/4-1/2}(I'I'_{*})^{\delta/2-1} \\ 
		&\quad\times \delta_{3} (v+v_{*}-v'-v'_{*}) \delta_1 \left(\frac{\vert v \vert^2}{2} + \frac{ \vert v_{*}\vert^2}{2} +I+I_{*} -\frac{\vert v' \vert^2}{2}-\frac{\vert v'_{*}\vert^2}{2} -I'-I'_{*}\right)dv'dv'_*dI'dI'_*,
	\end{split}
\end{align}
	where we used $\sqrt{\vert u \vert^2+4I+4I_* -4I' -4I'_{*}}= \vert u' \vert $ under \eqref{CL}.
	Moreover, we notice that $\Phi = \frac{|u|^2}{4}+I+I_* = \frac{|u'|^2}{4}+I'+I_*' = \Phi'$. Thus, we have the lower bound for the term $\Phi^\delta$ in the right-hand side of \eqref{k1 esti2}: 
	\begin{align} \label{Phi esti} 
		\Phi^{\delta} = \Phi^{\frac{\delta}{4}-\frac{1}{4}} (\Phi')^{\frac{3}{4}\delta +\frac{1}{4}}=\left(\frac{|u|^2}{4}+I+I_*\right)^{\frac{\delta}{4}-\frac{1}{4}} \left(\frac{|u'|^2}{4}+I'+I'_*\right)^{\frac{3}{4}\delta + \frac{1}{4} } \geq (I_*)^{\frac{\delta}{4}-\frac{1}{4}} (I'I'_*)^{\frac{3}{8}\delta+\frac{1}{8}}.
	\end{align}
	If we use the notation $V= v+v_*$ and $V' = v'+v'_*$, then it follows from \eqref{k1 esti} and \eqref{Phi esti} that 
	\begin{align*}
		k_1 (v,v_*,I,I_*) &\leq C \Phi^{(2-\alpha)/2} I^{\delta/4-1/2}(I_*)^{-1/4}e^{-\frac{|v|^2}{8}-\frac{|v_*|^2}{8}-\frac{I}{4}-\frac{I_*}{4}} \int_{(\R^3)^2\times (\R_+)^2} e^{-\frac{|v'|^2}{8}-\frac{|v_*'|^2}{8}-\frac{I'}{4}-\frac{I_*'}{4}} (I'I'_*)^{\delta/8-9/8}\\
		&\quad \times \delta_{3} (V-V') \delta_1 \left(\sqrt{\vert u \vert^2+4I+4I_* -4I' -4I'_{*}}- \vert u' \vert \right) dv'dv'_*dI'dI'_*\\
		&\leq C (I_*)^{-1/4} e^{-\frac{|v|^2}{16}-\frac{|v_*|^2}{16}-\frac{I}{8}-\frac{I_*}{8}},
	\end{align*}
	where we used $\frac{\delta}{8} -\frac{9}{8}>-1$ for $\delta \in [2,\infty)$. Hence, we obtain 
	\begin{align} \label{k1}
		\int_{\R^3 \times \R_+} \left | k_1(v,v_*,I,I_*)\frac{w(v,I)}{w(v_*,I_*)}\right |^2 dv_* dI_* &\leq C \int_{\R^3 \times \R_+} (I_*)^{-1/2} e^{-\frac{|v|^2}{8}-\frac{|v_*|^2}{8}-\frac{I}{4}-\frac{I_*}{4}} \frac{w^2(v,I)}{w^2(v_*,I_*)} dv_* dI_* \leq C.	
	\end{align}
	For $k_2$, we refer to \cite{DLpoly} for the following estimate: 
	\begin{align*}
		k_2(v,v_*,I,I_*) \frac{w(v,I)}{w(v_*,I_*)} \leq \frac{C}{|u|} e^{-\frac{|u|^2}{16}}\int_{(\R_+)^2} e^{-\frac{I'}{4}-\frac{I'_*}{4}}(II_*)^{\delta/4-1/2}(I'I'_*)^{\delta/2-1} \phi(I,I_*,I',I'_*) dI'dI'_*,
	\end{align*}
	where 
	\begin{align*}
		\phi(I,I_*,I',I'_*) &:= \frac{1}{(II_*)^{\delta/4-1/2}(I'I'_*)^{\delta/4+1/4}} \chi_{\{I\leq 1, I_*\leq 1\}}+\frac{1}{I^{\delta/4-1/2}I_*^{3\delta/4}} \chi_{\{I\leq 1, I_*\geq 1\}}\\
		&\quad+\frac{1}{I^{3\delta/4}I_*^{\delta/4-1/2}} \chi_{\{I\geq 1, I_*\leq 1\}}+\frac{1}{I^{3\delta/4-5/4}I_*^{\delta/4+3/4}} \chi_{\{I\geq 1, I_*\geq 1\}}.
		\end{align*}
		Therefore, one obtains that 
		\begin{align} \label{k2 esti} 
			\begin{split}
			&\int_{\R^3 \times \R_+} \left | k_2(v,v_*,I,I_*) \frac{w(v,I)}{w(v_*,I_*)}\right |^2 dv_* dI_* \\
			&\leq C \int_{\R^3 \times \R_+} \frac{e^{\frac{-|v-v_*|^2}{8}}}{|v-v_*|^2}  \left(\int_{(\R_+)^2} e^{-\frac{I'}{4}-\frac{I'_*}{4}}(II_*)^{\delta/4-1/2}(I'I'_*)^{\delta/2-1} \phi(I,I_*,I',I'_*) dI'dI'_*\right)^2dv_* dI_* . 
			\end{split}
		\end{align}
			To further estimate $k_2$, we compute the following integral in \eqref{k2 esti} using $\phi$: 
		\begin{align}\label{k2 integral}
			\begin{split}
				&\int_{(\R_+)^2} e^{-\frac{I'}{4}-\frac{I'_*}{4}}(II_*)^{\delta/4-1/2}(I'I'_*)^{\delta/2-1} \phi(I,I_*,I',I'_*) dI'dI'_*\\
				&\leq C  \int_{(\R_+)^2} e^{-\frac{I'}{4}-\frac{I'_*}{4}} \left( (I'I'_*)^{\delta/4-5/4} \chi_{\{I\leq 1, I_*\leq 1\}} + I_*^{-\delta/2-1/2}(I'I'_*)^{\delta/2-1} \chi_{\{I\leq 1, I_*\geq 1\}} \right. \\
				& \quad \left. + I^{-\delta/2-1/2}(I'I'_*)^{\delta/2-1} \chi_{\{I\geq 1, I_*\leq 1\}}+I^{-\delta/2+3/4}I_*^{-5/4}(I'I'_*)^{\delta/2-1} \chi_{\{I\geq 1, I_*\geq 1\}} \right) dI'dI'_*  \\
				&\leq C \left(1\chi_{\{I_* \leq 1\}} + I_*^{-5/4} \chi_{\{I_* \geq 1\}}\right).
			\end{split}
		\end{align}
		where the last inequality comes from $-\delta/2-1/2<-5/4$. From \eqref{k2 esti} and \eqref{k2 integral}, we have
		\begin{align} \label{k2} 
			\begin{split}
				\int_{\R^3 \times \R_+} \left | k_2(v,v_*,I,I_*) \frac{w(v,I)}{w(v_*,I_*)}\right |^2 dv_* dI_* &\leq C \int_{\R^3 \times \R_+} \frac{e^{\frac{-|v-v_*|^2}{8}}}{|v-v_*|^2} \left[1\chi_{\{I_* \leq 1\}} + I_*^{-5/4} \chi_{\{I_* \geq 1\}}\right]^{2} dv_* dI_*\\
				& \leq C \int _{\R^3}   \frac{e^{\frac{-|v-v_*|^2}{8}}}{|v-v_*|^2} dv_* \\
				&\leq C.
			\end{split} 
		\end{align}
		Combining \eqref{k1} and \eqref{k2}, we get the desired result
		\begin{align*}
			\int_{\R^3 \times \R_+} |k_w (v,v_*,I,I_*)|^2 dv_* dI_* \leq C, 
		\end{align*} 
		where the constant $C>0$ does not depend on $v$ and $I$. This completes the proof. 
	\end{proof}
	
\subsection{Pointwise estimate on non-linear terms}

\begin{lemma}\label{GE}
There exists a constant $C>0$ such that 
\begin{align} \label{gain esti}
	\vert w(v,I) \Gamma_+ (f,f)(t,x,v,I) \vert \leq \frac{C \Vert wf(t) \Vert_{L^{\infty} }}{1+ \vert v \vert +I^{1/4}} \left(\int_{\R^3 \times \R_+} (1+\vert v_* \vert + \sqrt{I_*})^{-2\beta +8} \vert wf(t,x,v_*,I_*) \vert^2 dv_* dI_* \right)^{1/2}, 
\end{align}
where $w(v,I) = (1+ \vert v \vert + \sqrt{I})^{\beta}$ for $\beta > 7$. 
\end{lemma}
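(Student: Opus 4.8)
The plan is to estimate $\Gamma_+(f,f)$ by writing it out as the gain part of the collision operator acting on $\sqrt{M}f$, and then bounding the resulting integral pointwise. Recall
\[
\Gamma_+(f,f)(t,x,v,I) = \frac{1}{\sqrt{M}}\int_{(\R^3 \times \R_+)^3} W(v,v_*,I,I_*\vert v',v'_*,I',I'_*)\frac{\sqrt{M'}f'\sqrt{M'_*}f'_*}{(I'I'_*)^{\delta/2-1}}\,dv_*dv'dv'_*dI_*dI'dI'_*.
\]
First I would extract the weights: since $w(v,I) = (1+|v|+\sqrt I)^\beta$ and the total energy is conserved, one has the elementary inequality $w(v,I) \lesssim w(v',I')w(v'_*,I'_*)$ (because $|v|^2/2 + I \le |v'|^2/2 + |v'_*|^2/2 + I' + I'_*$ forces $1+|v|+\sqrt I \lesssim (1+|v'|+\sqrt{I'})(1+|v'_*|+\sqrt{I'_*})$). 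Hence $w(v,I)\le C\, w(v',I')w(v'_*,I'_*)$ inside the integral, so I can replace $f'$ by $\frac{1}{w'}|wf|(t,x,v',I')$ and similarly for $f'_*$. Pulling $\|wf(t)\|_{L^\infty}$ out of one of the two factors, the claim reduces to bounding
\[
\frac{C\|wf(t)\|_{L^\infty}}{\sqrt M}\int W \,\frac{\sqrt{M'}\sqrt{M'_*}}{(I'I'_*)^{\delta/2-1}}\,\frac{w(v,I)}{w(v',I')w(v'_*,I'_*)}\cdot w(v',I')\,\frac{|wf|(t,x,v'_*,I'_*)}{w(v'_*,I'_*)^2}\,w(v'_*,I'_*)^2\cdots
\]
— more cleanly, after absorbing the $w$'s the remaining kernel is $\frac{w(v,I)}{\sqrt M}W\frac{\sqrt{M'M'_*}}{(I'I'_*)^{\delta/2-1}}$ and I need to control $\int (\text{kernel})\,|wf(t,x,v'_*,I'_*)|\,dv_*dv'dv'_*dI_*dI'dI'_*$ after changing variables so that $(v'_*,I'_*)$ becomes the active integration variable.

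The second step is the Cauchy--Schwarz split. After the change of variables identifying $(v'_*, I'_*)$ with a new velocity/energy variable $(v_*, I_*)$ — this is precisely the type of representation/change-of-variables manipulation used in \cite{BN2,DLpoly} to turn the $W$-integral against a $\delta$-function into an integral over $(v_*,I_*)$ with an explicit kernel $\ell(v,v_*,I,I_*)$ — the quantity to estimate is
\[
\int_{\R^3\times\R_+} \ell(v,v_*,I,I_*)\,|wf(t,x,v_*,I_*)|\,dv_*dI_*,
\]
where $\ell$ carries the weight ratio and the Gaussian factors. I then write $\ell\,|wf| = \big(\ell^{1/2}(1+|v_*|+\sqrt{I_*})^{\beta-4}\big)\cdot\big(\ell^{1/2}(1+|v_*|+\sqrt{I_*})^{-(\beta-4)}|wf|\big)$ and apply Cauchy--Schwarz in $(v_*,I_*)$. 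The second factor squared gives exactly the integral $\int (1+|v_*|+\sqrt{I_*})^{-2\beta+8}|wf|^2$ appearing on the right-hand side of \eqref{gain esti}. The first factor squared must be shown to be bounded by $\frac{C}{(1+|v|+I^{1/4})^2}$ — i.e. I need
\[
\int_{\R^3\times\R_+} \ell(v,v_*,I,I_*)\,(1+|v_*|+\sqrt{I_*})^{2\beta-8}\,dv_*dI_* \le \frac{C}{1+|v|+I^{1/4}}.
\]
This is the analogue of the kernel estimate \eqref{kw esti} from Lemma \ref{K esti}: $\ell$ has the same structure as $k_w$ (weight ratio times $W$ against the collision $\delta$-functions, with Gaussians of the pre/post variables), and the polynomial factor $(1+|v_*|+\sqrt{I_*})^{2\beta-8}$ plays the role of the $e^{\varepsilon|v-v_*|^2}(1+I_*)^m$ weight there; it is absorbed by the Gaussian decay. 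So this step is essentially a repetition of the $k_w$-type computation — split into the $K_1$-like and $K_2$-like contributions, use the conservation law $\Phi=\Phi'$ and the lower bounds on $\Phi$ exactly as in \eqref{Phi esti}, and integrate out the internal energy variables as in \eqref{k2 integral}.

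The main obstacle I anticipate is bookkeeping the internal-energy weights through the change of variables $R = \frac{|v'-v'_*|}{4\Phi},\ r = \frac{I'}{(1-R)\Phi}$ (and its inverse), since $\Gamma_+$ involves the factor $(I'I'_*)^{1-\delta/2}$ against $(II_*I'I'_*)^{\delta/2-1}$ from $W$, leaving genuine powers of $I,I_*,I',I'_*$ that must be controlled near both $I_*\to 0$ and $I_*\to\infty$; this is where the restriction $\beta>7$ (equivalently $2\beta-8 > 6$, keeping enough Gaussian room) and the splitting into the four regions $\{I\lessgtr 1, I_*\lessgtr 1\}$ as in the $\phi$ decomposition of Lemma \ref{K esti} become essential. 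A secondary point requiring care is the factor $1/\sqrt M = (2\pi)^{3/4}\Gamma(\delta/2)^{1/2} I^{1-\delta/4} e^{|v|^2/4 + I/2}$, whose exponential growth in $(v,I)$ must be dominated by the product $\sqrt{M'}\sqrt{M'_*}$ after using total energy conservation $|v|^2/2+I \le |v'|^2/2+|v'_*|^2/2+I'+I'_*$; this is standard but must be done with enough margin to leave the $e^{-\varepsilon|v-v_*|^2}$-type decay needed in the kernel estimate above. Once these weight computations are in place, assembling \eqref{gain esti} is immediate from Cauchy--Schwarz.
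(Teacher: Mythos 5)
Your high-level strategy matches the paper's: bound $w(v,I)$ by post-collisional weights to extract $\|wf(t)\|_{L^\infty}$, relabel variables so the surviving $f$-factor depends on $(v_*,I_*)$, and close with Cauchy--Schwarz against the weight $(1+|v_*|+\sqrt{I_*})^{\pm(\beta-4)}$. But there are two concrete problems with the sketch.

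\textbf{The Cauchy--Schwarz split is incorrect as written.} You factor $\ell|wf| = \bigl(\ell^{1/2}(1+|v_*|+\sqrt{I_*})^{\beta-4}\bigr)\cdot\bigl(\ell^{1/2}(1+|v_*|+\sqrt{I_*})^{-(\beta-4)}|wf|\bigr)$ and claim the second factor squared gives ``exactly'' $\int(1+|v_*|+\sqrt{I_*})^{-2\beta+8}|wf|^2$. It does not: it gives $\int\ell\,(1+|v_*|+\sqrt{I_*})^{-2\beta+8}|wf|^2$, and the extra $\ell$ cannot simply be dropped since $\ell$ retains the $|v-v_*|^{-1}$ singularity that comes from collapsing $\delta_1(\zeta-\Delta J/|u|)$ (and there is no uniform bound $\ell\le C$). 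To make the right-hand side of \eqref{gain esti} appear you must put \emph{all} of $\ell$ into the first factor, which is what the paper does: it identifies a factor $(1+|v_*|+\sqrt{I_*})^{-\beta+4}$ in the integrand and bounds $\int\frac{1}{|v-v_*|^2}\frac{1}{(1+|v_*|)^4}\frac{1}{(1+I_*)^2}\,dv_*dI_* \lesssim (1+|v|)^{-2}$ separately. With your split, the estimate you would actually need is $\int\ell^2(1+|v_*|+\sqrt{I_*})^{2\beta-8}\,dv_*dI_*\lesssim (1+|v|+I^{1/4})^{-2}$, a different statement than the one you wrote (and also with exponent $2$, not $1$, on the right).

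\textbf{The weight inequality you chose changes the kernel structure, and the resulting kernel estimate is not ``the same as'' \eqref{kw esti}.} The paper uses the dichotomy $w(v,I)\le 4^\beta w(v',I')$ \emph{or} $w(v,I)\le 4^\beta w(v'_*,I'_*)$, splits $\Gamma_+$ into $I_1+I_2$, and in each piece pulls $\|wf\|_\infty$ from the weighted factor, leaving the other $f$-factor \emph{unweighted}; the identity $\sqrt{M_*}=\frac{(II_*)^{\delta/4-1/2}}{(I'I'_*)^{\delta/4-1/2}}\frac{\sqrt{M'M'_*}}{\sqrt{M}}$ then puts the Gaussian on the variable that, after relabelling, becomes a dummy — so the kernel has \emph{no} Gaussian decay in $(v_*,I_*)$, and the polynomial decay $(1+|v_*|+\sqrt{I_*})^{-\beta}$ that makes the $v_*$-integral converge comes entirely from writing $|f_*|=w_*^{-1}|wf_*|$. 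Your multiplicative bound $w\lesssim w'w'_*$ instead leaves $|wf'_*|$, so that factor of $(1+|v_*|+\sqrt{I_*})^{-\beta}$ is absent; what you gain is that $\sqrt{M'_*}$, after the relabelling $(v'_*,I'_*)\to(v_*,I_*)$, becomes a Gaussian $\sqrt{M_*}$ in the active variable. This is not implausible, but it is a genuinely different kernel and the estimate you need — with a polynomially \emph{growing} weight $(1+|v_*|+\sqrt{I_*})^{2\beta-8}$ and decay $(1+|v|+I^{1/4})^{-2}$ — is not the same as \eqref{kw esti}, whose allowed weight is $(1+I_*)^m$, $m\le 1/8$, and whose decay is only $(1+|v|+I^{1/4})^{-1}$. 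Calling this ``essentially a repetition of the $k_w$-type computation'' skips the actual work: you would have to redo the $\zeta,\eta$ change of variables, the $\Phi=\Phi'$ bound, and a new $\phi$-decomposition for this modified kernel, verifying both the $(v,I)$-decay and that $1/\sqrt{M}$ is controlled along the way. The paper's dichotomy route sidesteps this by keeping the needed $w_*^{-\beta}$ factor explicit.
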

\begin{proof}
By the energy conservation law 
\begin{align*}
	\frac{\vert v \vert^2}{2} + \frac{ \vert v_{*}\vert^2}{2} +I+I_{*} = \frac{\vert v' \vert^2}{2} + \frac{ \vert v'_{*}\vert^2}{2} +I'+I'_{*},
\end{align*}
we have 
\begin{align*}
	\frac{\vert v \vert^2}{2} + I \leq \frac{\vert v' \vert^2}{2} + \frac{ \vert v'_{*}\vert^2}{2} +I'+I'_{*},
\end{align*}
which implies 
\begin{align*}
	w(v,I) \leq 4^{\beta}w(v',I') \quad \text{or} \quad w(v,I) \leq 4^{\beta}w(v'_*, I'_*).
\end{align*}
Then, it follows that
\begin{align} \label{esti 1}
	\begin{split}
		&\vert w(v,I) \Gamma_+ (f,f)(v,I) \vert \cr
		&= \frac{w(v,I)}{\sqrt{M(v,I)}} \vert Q_+ (\sqrt{M}f ,\sqrt{M}f) \vert \\
		&\leq \frac{4^{\beta}}{\sqrt{M(v,I)}} \int_{(\R^3 \times \R_+)^3} W(v,v_{*},I,I_{*} \vert v',v'_{*},I',I'_{*})\frac{\sqrt{M'}\sqrt{M'_*}}{(I'I'_*)^{\delta/2-1}} \vert w' f' \cdot f'_*\vert dv_* dv' dv'_* dI_* dI' dI'_* \\
		&\quad + \frac{4^{\beta}}{\sqrt{M(v,I)}} \int_{(\R^3 \times \R_+)^3} W(v,v_{*},I,I_{*} \vert v',v'_{*},I',I'_{*})\frac{\sqrt{M'}\sqrt{M'_*}}{(I'I'_*)^{\delta/2-1}} \vert w'_* f'_* \cdot f'\vert dv_* dv' dv'_* dI_* dI' dI'_* \\
		&\leq 4^{\beta} \Vert wf \Vert_{L^{\infty} } \int_{(\R^3 \times \R_+)^3} W(v,v_{*},I,I_{*} \vert v',v'_{*},I',I'_{*})\frac{\sqrt{M_*}}{(II_*I'I'_*)^{\delta/4-1/2}} \vert f'_* \vert dv_* dv' dv'_* dI_* dI' dI'_* \\ 
		&\quad + 4^{\beta} \Vert wf \Vert_{L^{\infty} } \int_{(\R^3 \times \R_+)^3} W(v,v_{*},I,I_{*} \vert v',v'_{*},I',I'_{*})\frac{\sqrt{M_*}}{(II_*I'I'_*)^{\delta/4-1/2}}  \vert f'\vert dv_* dv' dv'_* dI_* dI' dI'_*\\ 
		&:= I_1 + I_2, 
	\end{split} 
\end{align}
where we used 
\begin{align*}
	\sqrt{M_*} = \frac{ (II_*)^{\delta/4-1/2}}{ (I'I'_*)^{\delta/4-1/2}} \frac{\sqrt{M'}\sqrt{M'_*}}{\sqrt{M}}.
\end{align*}
For $I_2$ in \eqref{esti 1}, rename $(v',I')$ to $(v_*, I_*)$ to get 
\begin{align} \label{I2 est}
	I_2 = 4^{\beta} \Vert wf \Vert_{L^{\infty} }\int_{(\R^3 \times \R_+)^3} W(v,v',I,I' \vert v_*,v'_{*},I_*,I'_{*})\frac{\sqrt{M'}}{(II_*I'I'_*)^{\delta/4-1/2}}  \vert f_* \vert dv_{*}dv'dv'_{*}dI_{*}dI'dI'_{*}.
\end{align}
To estimate $I_2$, we denote 
\begin{align}
	\xi := v- v' , \quad \xi_* := v_* - v'_*, \quad \Psi := \frac{ \vert \xi \vert^2}{4} + I+ I', \quad \zeta := (v_* -v') \cdot \frac{u}{\vert u \vert}, \quad \text{and} \quad \Delta J := I_* +I'_* -I-I'.
\end{align}
\begin{figure}[h]
	\centering
	\includegraphics[scale=0.65]{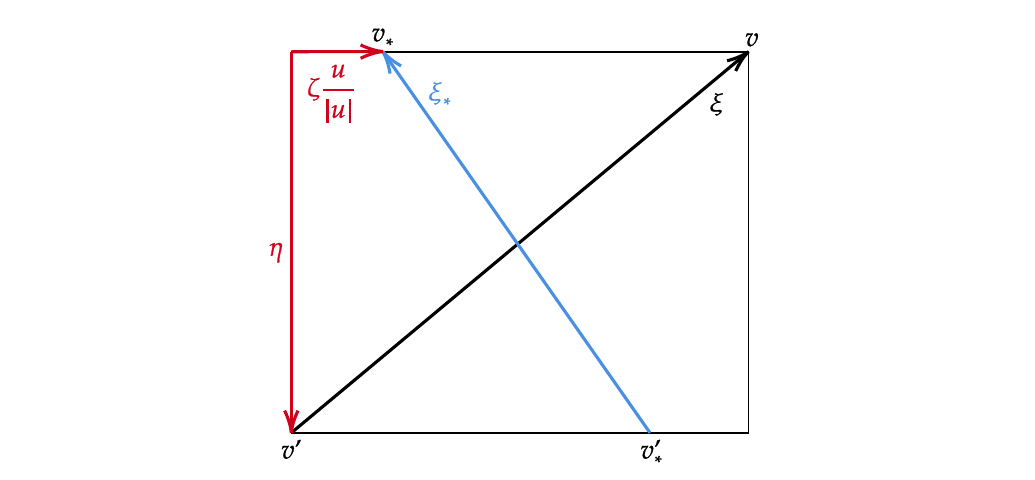}  
	\caption{Representation of variables}
	\label{fig2}
\end{figure}

\noindent By the definition \eqref{W} of $W$, we obtain 
\begin{align} \label{W1}
		\begin{split}
	W(v,v',I,I' | v_* , v'_*, I_*, I'_*) &= C (II_*I'I'_*)^{\delta/2-1}\frac{ \chi_{\{\vert \xi \vert^2+4I+4I'>4I_* +4I'_*\}}}{\Psi^{\delta+ (\alpha-1)/2}} \cr
		&\quad\times \delta_1 \left(\frac{\vert v \vert^2}{2} + \frac{ \vert v'\vert^2}{2} +I+I' -\frac{\vert v_* \vert^2}{2}-\frac{\vert v'_{*}\vert^2}{2} -I_*-I'_{*}\right) \delta_3(v+v'-v_*-v'_*).
		\end{split}
\end{align}
Here, using properties of the Dirac delta, we get (Figure \ref{fig2})
	\begin{align*}
		v'_*=v+v'-v_*,
	\end{align*} 
	so that
	\begin{align}\label{v'*}
		|v'_*|^2=|v|^2+|v'|^2+|v_*|^2+2v\cdot v'-2v'\cdot v_*-2v_*\cdot v.
	\end{align}
	Inserting \eqref{v'*} into $\delta_1$, we have
	\begin{align*}
		&\delta_1 \left(\frac{\vert v \vert^2}{2} + \frac{ \vert v'\vert^2}{2} +I+I' -\frac{\vert v_* \vert^2}{2}-\frac{\vert v'_{*}\vert^2}{2} -I_*-I'_{*}\right)\cr
		&=\delta_1 \left(v'\cdot v_*+v_*\cdot v-v\cdot v'-|v_*|^2 +I+I' -I_*-I'_{*}\right)\cr
		&=\delta_1 \left((v_*-v')\cdot(v-v_*)-\Delta J \right)\cr
		&=\frac{1}{|u|}\delta_1 (\zeta -\frac{\Delta J}{ \vert u \vert}).
	\end{align*}
Thus, we rewrite $W(v,v',I,I' | v_* , v'_*, I_*, I'_*)$ as follows:
	\begin{align}\label{W2}
		W(v,v',I,I' | v_* , v'_*, I_*, I'_*)=C \frac{(II_*I'I'_*)^{\delta/2-1}}{ \vert u \vert} \frac{ \chi_{\{\vert \xi \vert^2+4I+4I'>4I_* +4I'_*\}}}{\Psi^{\delta+ (\alpha-1)/2}} \delta_1 (\zeta -\frac{\Delta J}{ \vert u \vert}) \delta_3( u+ u').
	\end{align}
We resolve 
\begin{align*}
	v' - v_* = \eta - \zeta n, \quad n:= \frac{u}{ \vert u \vert},
\end{align*}
It holds that $dv' dv'_* = du' d \zeta d \eta$. It follows from \eqref{I2 est} and \eqref{W2} that 
\begin{align} \label{I2 est 2}
	\begin{split}
	I_2 &= C_{\beta} \Vert wf \Vert _{L^{\infty} } \int_{\R^3 \times \R_+} \int_{\R^3 \times \R_+ \times (\R^3)^{\perp n} \times (\R_+)^2 } e^{-\frac{\vert v' \vert^2}{4} - \frac{I'}{2}} \frac{(II_* I_*')^{\delta/4-1/2} (I')^{\delta/2-1}}{\vert u \vert} \\
	&\quad \times \frac{\chi_{\{\vert \xi \vert^2+4I+4I'>4I_* +4I'_*\}}}{\Psi^{\delta+ (\alpha-1)/2}} \delta_1 (\zeta -\frac{\Delta J}{ \vert u \vert}) \delta_3( u+ u') du' d\zeta d \eta dI' dI'_* \vert f(v_*,I_*) \vert dv_* dI_*.
	\end{split}
\end{align}
Note that 
\begin{align} \label{cov}
	\begin{split}
	\vert v' \vert ^2 &= \vert v_*- \zeta n + \eta\vert^2 \\
	&= \vert (v_*)_{||} - \zeta n +(v_*)_{\perp} + \eta \vert^2 \\
	&= \vert  v_*\cdot n - \zeta \vert^2 + \vert (v_*)_{\perp} + \eta \vert^2,
	\end{split}
\end{align}	
where 
\begin{align*}
	(v_*)_{||} := (v_* \cdot n ) n, \quad (v_*)_{\perp} := v_* - (v_*)_{||}.
\end{align*}
Since $\Psi = \frac{\vert \xi \vert^2}{4} + I  + I' = \frac{\vert \xi' \vert^2}{4} + I_* + I'_*$ by $\delta_1 ( \zeta -\frac{\Delta J}{\vert u \vert}) \delta_3 ( u+u')$, one obtains that 
\begin{align} \label{esti 2} 
	\Psi = \frac{\vert \xi \vert^2}{4} + I  + I'  \geq C \vert \xi \vert \sqrt{ \frac{\vert \xi \vert^2}{4} + I  + I'-I_* - I'_*} = C \vert \xi \vert \vert \xi _* \vert \geq C \vert \eta \vert^2, 
\end{align}
and 
\begin{align*}
	\Psi ^{\delta-1/2} \geq \phi_i (I,I_*, I' , I'_*), \quad i =1,2,3,4, 
\end{align*}
where 
\begin{align*}
	\phi_1 (I,I_*,I',I'_*) &:= (II_*)^{ \delta/4-1/2} (I'I'_*)^{ \delta/4+1/4}, \\ 
	\phi_2 (I,I_*,I',I'_*) &:= I_*^{\delta/4-1/2} (I'_*)^{3\delta/4},\\
	\phi_3 (I,I_*,I',I'_*) &:= I^{\delta/2+1/2}(I_*I'_*)^{\delta/4-1/2} ,\\
	\phi_4 (I,I_*,I',I'_*) &:= I^{\delta/2-3/4} I_*^{\delta/4-1/2}( I'_*)^{\delta/4+3/4}.
\end{align*}
We set 
\begin{align*}
	\phi(I,I_*,I',I'_*) &:= \frac{1}{\phi_1 (I,I_*,I',I'_*)} \chi_{\{ I \leq 1, I'_* \leq 1\}} + \frac{1}{\phi_2 (I,I_*,I',I'_*)} \chi_{\{I\leq 1 I'_* \geq 1\}}\\
	& \quad +\frac{1}{\phi_3 (I,I_*,I',I'_*)} \chi_{\{I \geq 1, I'_* \leq 1\}}+ \frac{1}{\phi_4 (I,I_*,I',I'_*)} \chi_{\{I \geq 1, I'_* \geq 1\}},
\end{align*}
so that 
\begin{align} \label{psi esti}
	\frac{1}{\Psi^{\delta -1/2}} \leq \phi(I,I_*, I' ,I'_*).
\end{align}
Thus, from \eqref{I2 est 2}, \eqref{cov}, \eqref{esti 2}, and \eqref{psi esti}, we obtain 
\begin{align*}
	 &\int_{\R^3 \times \R_+ \times (\R^3)^{\perp n} \times (\R_+)^2 } e^{-\frac{\vert v' \vert^2}{4} - \frac{I'}{2}} \frac{(II_* I_*')^{\delta/4-1/2} (I')^{\delta/2-1}}{\vert u \vert} \frac{\chi_{\{\vert \xi \vert^2+4I+4I'>4I_* +4I'_*\}}}{\Psi^{\delta+ (\alpha-1)/2}}\\
	 &\quad \times \delta_1 (\zeta -\frac{\Delta J}{ \vert u \vert}) \delta_3( u+ u') du' d\zeta d \eta dI' dI'_*\\
	 &\leq \frac{C}{\vert u \vert} \int_{\R^3 \times \R_+ \times (\R^3)^{\perp n} \times (\R_+)^2 } e^{-\vert v_* \cdot n - \zeta  \vert^2}e^{-\frac{I'}{2}} (II_* I'_*)^{\delta/4 -1/2} (I')^{\delta/2 -1} \\
	 &\quad \times \phi (I,I_*,I',I'_*) \frac{e^{-\vert (v_*)_{\perp} +\eta\vert^2}}{\vert \eta \vert^{\alpha}}\delta_1 (\zeta - \frac{\Delta J}{\vert u \vert}) \delta_3 (u+u') du' d\zeta d\eta dI' dI'_*\\ 
	 &\leq \frac{C}{\vert u \vert} \int_{(\R_+)^2}e^{-\frac{I'}{2}} (II_* I'_*)^{\delta/4 -1/2} (I')^{\delta/2 -1} \phi(I,I_*,I',I'_*) dI' dI'_*.
\end{align*}
Hence, we can further bound $I_2$ as 
\begin{align*}
	I_2 &\leq C_{\beta} \Vert wf \Vert_{L^{\infty}\  } \int_{\R^3 \times (\R_+)^3} \frac{1}{\vert v-v_* \vert} e^{-\frac{I'}{2}} (II_*I'_*)^{\delta/4-1/2} (I')^{\delta/2-1} \phi(I,I_*,I',I'_*) \vert f(v_*, I_*)\vert dI_* dI' dI'_* dv_*\\
	&\leq C_{\beta} \Vert wf \Vert_{L^{\infty}\  } \int_{\R^3} \frac{1}{\vert v- v_* \vert} \\
	&\quad \times \int_{(\R_+)^3} (I')^{\delta/2 -1} e^{-\frac{I'}{2}} (II_*I'_*)^{\delta/4 -1/2} \phi(I,I_*,I',I'_*) (1+ \vert v_* \vert + \sqrt{I_*})^{-\beta} \vert wf(v_*,I_*) \vert dI' dI'_* dI_* dv_* \\
	& \leq C_{\beta}  \Vert wf \Vert_{L^{\infty}\  } \int_{\R^3} \frac{1}{\vert v-v_* \vert} \int_{(\R_+)^2} \left( (I'_*)^{-3/4} \chi_{\{I \leq 1, I'_* \leq 1\}} + (I'_*)^{-\delta/2-1/2}  \chi_{\{I\leq 1, I'_* \geq 1\}}\right.\\
	&\left. \quad + I^{-\delta/4-1}  \chi_{\{I \geq 1, I'_*\leq 1\}} + I^{-\delta/4+1/4}  (I'_*)^{-5/4} \chi_{\{I \geq 1, I'_* \geq 1\}}\right) (1+ \vert v_* \vert +\sqrt{I_*})^{-\beta} \vert wf(v_*,I_*)\vert dI_* dI'_* dv_*\\
	&\leq  C_{\beta} \frac{\Vert wf \Vert_{L^{\infty}\  } }{1+I^{1/4}}\int_{\R^3 \times \R_+} \frac{1}{ \vert v -v_* \vert} \frac{1}{(1+ \vert v_* \vert)^2} \frac{1}{(1+ I_*)} (1+ \vert v_* \vert + \sqrt{I_*})^{-\beta +  4}  \vert wf(v_*,I_*) \vert dI_* dv_* \\
	& \leq  C_{\beta}  \frac{\Vert wf \Vert_{L^{\infty}\  }}{1+I^{1/4}} \left(\int_{\R^3 \times \R_+} \frac{1}{\vert v-v_* \vert^2}\frac{1}{(1+\vert v_* \vert)^4}\frac{1}{(1+I_*)^2}dI_* dv_* \right)^{1/2}\\
	&\quad \times \left(\int_{\R^3 \times \R_+} (1+ \vert v_* \vert + \sqrt{I_*})^{-2\beta +   8} \vert wf(v_*,I_*) \vert^2 dI_* dv_* \right)^{1/2}\\
	&\leq  C_{\beta}  \frac{\Vert wf \Vert_{L^{\infty}\  }}{1+\vert v \vert + I^{1/4}} \left(\int_{\R^3 \times \R_+} (1+ \vert v_* \vert + \sqrt{I_*})^{-2\beta +   8}\vert wf(v_*,I_*) \vert^2 dI_* dv_* \right)^{1/2},
\end{align*}
where we have used the fact
\begin{align*}
	\int_{\R^3} \frac{1}{\vert v-v_* \vert^2} \frac{1}{(1+\vert v_* \vert)^4} dv_* \leq \frac{1}{(1+ \vert v \vert)^2}.
\end{align*}
For $I_1$, by renaming $(v_*', I_*')$ to $(v_*,I_*)$, $I_1$ can be rewritten in a similar form as \eqref{I2 est} for $I_2$. Consequently, we can obtain the same estimate for $I_1$. Thus, we derive the estimate \eqref{gain esti}, which completes the proof. 
\end{proof}
\begin{lemma} \label{nonlinear esti} \cite{DLpoly}
There exists a constant $C>0$ such that 
\begin{align*}
	| w(v,I) \Gamma(f,f) (t,x,v,I) | \leq C \nu(v,I) ||wf(t)||_{L^{\infty}}^2, 
\end{align*}
where $w(v,I) = (1+ |v| + \sqrt{I})^{\beta}$ for $\beta>7$. 
\end{lemma}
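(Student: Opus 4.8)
The plan is to split $\Gamma(f,f)=\Gamma_+(f,f)-\Gamma_-(f,f)$ and bound the two pieces separately by $C\nu(v,I)\|wf(t)\|_{L^{\infty}}^2$, the gain part being an immediate consequence of Lemma~\ref{GE} and the loss part following from the known asymptotics of $\nu$.

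For the gain term all the work has already been done in Lemma~\ref{GE}. I would apply \eqref{gain esti} and estimate the $L^2$ factor trivially, pulling out another copy of $\|wf(t)\|_{L^{\infty}}$, which leaves the constant $\big(\int_{\R^3\times\R_+}(1+|v_*|+\sqrt{I_*})^{-2\beta+8}\,dv_*dI_*\big)^{1/2}$; since $\beta>7$ this weight is integrable on $\R^3\times\R_+$ (after substituting $I_*=s^2$ one is left with $\int\int (1+\rho+s)^{-2\beta+8}\rho^2 s\,d\rho\,ds<\infty$ because $2\beta-8>5$), so the factor is finite. Using only the lower bound $\nu(v,I)\ge\nu_0>0$ one then gets
\begin{align*}
|w(v,I)\Gamma_+(f,f)(t,x,v,I)|\le \frac{C}{1+|v|+I^{1/4}}\|wf(t)\|_{L^{\infty}}^2\le C\|wf(t)\|_{L^{\infty}}^2\le \tfrac{C}{\nu_0}\,\nu(v,I)\|wf(t)\|_{L^{\infty}}^2.
\end{align*}

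For the loss term I would use that $Q_-$ is diagonal in its first slot, cf.~\eqref{CO}, to write $\Gamma_-(f,f)(v,I)=f(v,I)\,\bar\nu(f)(v,I)$ with $\bar\nu(f)(v,I):=\int_{(\R^3\times\R_+)^3}W\,\sqrt{M_*}\,f_*/(II_*)^{\delta/2-1}$. Then, since $w\ge 1$,
\begin{align*}
|w(v,I)\Gamma_-(f,f)(t,x,v,I)|=|w(v,I)f(v,I)|\,|\bar\nu(f)(v,I)|\le \|wf(t)\|_{L^{\infty}}^2\int_{(\R^3\times\R_+)^3}W\,\frac{\sqrt{M_*}}{(II_*)^{\delta/2-1}},
\end{align*}
so it remains to prove $\int W\,\sqrt{M_*}/(II_*)^{\delta/2-1}\le C\,\nu(v,I)$. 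By \eqref{W} the factor $(II_*)^{\delta/2-1}$ cancels the matching power inside $W$, leaving the weight $\sqrt{M_*}=c\,I_*^{\delta/4-1/2}e^{-|v_*|^2/4-I_*/2}$, whose $I_*$-exponent $\delta/4-1/2$ is $\ge0$ for $\delta\ge2$, so there is no singularity and $\sqrt{M_*}\le C e^{-|v_*|^2/4-I_*/4}$. Hence the integral is exactly of the type computed for $\nu(v,I)=\int W\,M_*/(II_*)^{\delta/2-1}$ in \cite{DLpoly}: the same change of variables — integrating out the primed variables against $\delta_3$ and $\delta_1$, and controlling $1/\Phi^{\delta+(\alpha-1)/2}$ via $\Phi\ge|u|^2/4$ together with $B\sim\Phi^{(2-\alpha)/2}$ — uses only the rapid Gaussian decay in $v_*$, not its precise rate, and so again returns a bound $\lesssim(1+|v|+\sqrt I)^{2-\alpha}\sim\nu(v,I)$. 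Adding the two contributions gives $|w\Gamma(f,f)|\le |w\Gamma_+(f,f)|+|w\Gamma_-(f,f)|\le C\nu(v,I)\|wf(t)\|_{L^{\infty}}^2$.

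The only substantive input is Lemma~\ref{GE}, which is already available, so there is no serious obstacle; the one place requiring (minor) care is the $\Gamma_-$ estimate, namely verifying that the collision-frequency computation of \cite{DLpoly} is insensitive to replacing the Maxwellian $M_*$ by $\sqrt{M_*}$. This is clear once one observes, as above, that $\sqrt{M_*}$ is dominated by a Gaussian in $v_*$ and is non-singular in $I_*$ for $\delta\ge2$, so all the steps of that computation go through verbatim.
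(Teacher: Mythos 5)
Your proof is correct. The paper itself omits the proof of this lemma, deferring entirely to Lemma~2.3 of \cite{DLpoly}, so there is no in-paper argument to compare against; but both halves of your argument check out. For $\Gamma_+$ you invoke Lemma~\ref{GE}, pull a second factor of $\|wf\|_{L^\infty}$ out of the $L^2$ term, and observe that $\int_{\R^3\times\R_+}(1+|v_*|+\sqrt{I_*})^{-2\beta+8}\,dv_*\,dI_*<\infty$ for $\beta>7$ (indeed the substitution $I_*=s^2$ turns this into a two-dimensional integral with polynomial weight of total degree $5$, convergent once $2\beta-8>5$), and then bound the remaining factor $\le 1$ by $\nu(v,I)/\nu_0$. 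For $\Gamma_-$ you factor the operator, use $w\ge 1$, and reduce to showing $\int W\sqrt{M_*}/(II_*)^{\delta/2-1}\lesssim\nu(v,I)$; this is right, and can be seen directly from the form \eqref{BLCO}: the $(\omega,R,r)$ integral is a $\delta$-dependent constant, $B\lesssim(1+|v|+\sqrt{I})^{2-\alpha}(1+|v_*|+\sqrt{I_*})^{2-\alpha}$, and $\sqrt{M_*}\sim I_*^{\delta/4-1/2}e^{-|v_*|^2/4-I_*/2}$ is still Gaussian in $v_*$ and non-singular in $I_*$ for $\delta\ge 2$, so the $(v_*,I_*)$-integral converges — exactly the argument the paper itself repeats when it bounds $w\Gamma_-$ in the derivation of \eqref{RG-}.

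Worth noting on method: routing the gain term through Lemma~\ref{GE} is almost certainly not how \cite{DLpoly} proved this estimate, since Lemma~\ref{GE} is the new pointwise gain estimate introduced in the present paper precisely to go beyond the crude bound being proved here; the original argument in \cite{DLpoly} is presumably a direct one (using the energy-conservation bound $w(v,I)\le 4^\beta\max\{w(v',I'),w(v'_*,I'_*)\}$, then bounding both $f'$ and $f'_*$ by $\|wf\|_\infty/w$ and reducing to a $\nu$-type integral), and that direct route is what gives a self-contained development in which Lemma~\ref{nonlinear esti} does not depend on the more delicate Lemma~\ref{GE}. Your shortcut is logically valid here since Lemma~\ref{GE} is proved independently earlier in this paper, but it inverts the natural hierarchy of the estimates.
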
 
\begin{proof}
	Since it is the same as Lemma 2.3 in \cite{DLpoly}, we omit the detailed proof. 
\end{proof}
\subsection{Relative entropy} 
\begin{lemma} \label{relative entropy lemma}
	Assume that $F(t,x,v,I)$ is a solution to the Boltzmann equation \eqref{poly be}. For any $t \geq 0$, we have 
	\begin{align*}
		\int_{\mathbb{T}^3 \times \R^3 \times \R_+} \frac{1}{4M} \vert F-M  \vert^2 \chi_{\{\vert F-M \vert \leq M\}} dx dv dI + \int_{\T^3 \times \R^3 \times \R_+} \frac{1}{4} \vert F-M \vert \chi_{\{\vert F-M \vert > M\}} dxdvdI \leq \mathcal{E}(F_0), 
	\end{align*}
	where the relative entropy $\mathcal{E}(F)$ is defined by 
	\begin{align} \label{relative entropy}
		\mathcal{E}(F) : = \int_{\T^3 \times \R^3 \times \R_+} \left( \frac{F}{M} \ln \frac{F}{M} - \frac{F}{M} +1 \right) M dxdvdI. 
	\end{align}
\end{lemma}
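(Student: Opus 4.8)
The plan is to derive the claimed bound from the $H$-theorem together with a pointwise inequality for the convex function $\Phi(s) := s\ln s - s + 1$ on $[0,\infty)$. First I recall that, by the $H$-theorem and the conservation laws \eqref{laws} (which are used to ensure that $M$ is the correct reference Maxwellian so that the relative entropy $\mathcal E(F(t))$ is non-increasing), one has $\mathcal E(F(t)) \leq \mathcal E(F_0)$ for all $t \geq 0$. Indeed, writing $\mathcal E(F) = \int_{\T^3\times\R^3\times\R_+} \Phi(F/M)\, M \, dxdvdI$, the time derivative $\frac{d}{dt}\mathcal E(F(t))$ equals $\int (1 + \ln(F/M))(\p_t F)\,dxdvdI = \int (1+\ln(F/M))(-v\cdot\nabla_x F + Q(F,F))\,dxdvdI$; the transport term integrates to zero on the torus after integration by parts, the $\ln M$ and constant contributions to the $Q(F,F)$ term vanish by the collision invariance \eqref{collisional inv}, and what remains is exactly minus the entropy dissipation functional, which is non-negative by the $H$-theorem cited from \cite{BBBD,BDLP,GP}. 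Hence $\mathcal E(F(t)) \leq \mathcal E(F_0) = \mathcal E(F_0)$.

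It then suffices to prove the pointwise inequality
\begin{align*}
	\frac{1}{4}\,\frac{|F-M|^2}{M}\,\chi_{\{|F-M|\le M\}} + \frac14\,|F-M|\,\chi_{\{|F-M|>M\}} \;\le\; \Phi\!\left(\frac{F}{M}\right)M
\end{align*}
for all $F \ge 0$, and then integrate it over $\T^3\times\R^3\times\R_+$. Setting $s := F/M \ge 0$ and dividing by $M$, this reduces to the scalar claim: for $|s-1|\le 1$ (i.e. $0\le s\le 2$) one has $\frac14(s-1)^2 \le \Phi(s)$, and for $s > 2$ one has $\frac14|s-1| \le \Phi(s)$. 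The first is the standard quadratic lower bound for $\Phi$ near $1$: the function $g(s) := \Phi(s) - \frac14(s-1)^2$ satisfies $g(1)=g'(1)=0$ and $g''(s) = \frac1s - \frac12 \ge 0$ on $(0,2]$, so $g \ge 0$ there. For the second regime, when $s>2$ I use that $\Phi(s) = s\ln s - s + 1$ grows superlinearly: a direct check shows $h(s) := \Phi(s) - \frac14(s-1)$ has $h(2) = 2\ln 2 - 1 - \frac14 = 2\ln 2 - \tfrac54 > 0$ and $h'(s) = \ln s - \frac14 > 0$ for $s\ge 2$, so $h\ge 0$ on $[2,\infty)$; note also $|s-1| = s-1$ there.

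Combining the two scalar bounds, multiplying back by $M$ and integrating in $(x,v,I)$ gives
\begin{align*}
	\int_{\T^3\times\R^3\times\R_+} \frac{1}{4M}|F-M|^2 \chi_{\{|F-M|\le M\}}\,dxdvdI + \int_{\T^3\times\R^3\times\R_+}\frac14 |F-M|\chi_{\{|F-M|>M\}}\,dxdvdI \le \mathcal E(F(t)) \le \mathcal E(F_0),
\end{align*}
which is the assertion. The only genuinely delicate point is the justification that $\mathcal E(F(t))$ is well-defined and monotone for the \emph{mild} solutions under consideration — i.e. that the formal differentiation of the entropy and the integration by parts in $x$ are legitimate — which one handles either by working with the approximating sequence from the existence scheme and passing to the limit with Fatou's lemma, or by invoking the already-established $H$-theorem in the cited references; the scalar convexity estimates themselves are elementary one-variable calculus.
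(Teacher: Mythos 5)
Your proposal is correct, and the structure agrees with the paper's: both arguments reduce the claim to (i) monotonicity of $\mathcal E(F(t))$ from the $H$-theorem and (ii) a pointwise lower bound on $\psi(F/M)M$ with $\psi(s)=s\ln s-s+1$. The execution of (ii) differs. The paper expands $F\ln(I^{1-\delta/2}F)-M\ln(I^{1-\delta/2}M)$ by Taylor's theorem with Lagrange remainder, obtaining the exact identity $\tfrac{1}{2\tilde F}\,|F-M|^2=\psi(F/M)\,M$ for some $\tilde F$ between $F$ and $M$, and then bounds $1/\tilde F$ from below by $1/(2M)$ when $F\le 2M$ and by $1/(2|F-M|)$ when $F>2M$. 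You instead prove the two scalar inequalities $\psi(s)\ge\tfrac14(s-1)^2$ on $[0,2]$ (via $g(1)=g'(1)=0$ and $g''=1/s-1/2\ge 0$ there) and $\psi(s)\ge\tfrac14(s-1)$ on $(2,\infty)$ (via $h(2)>0$, $h'>0$), and then multiply by $M$. Both routes are elementary and give the same bound; yours avoids the mean-value theorem at the cost of two explicit one-variable checks, while the paper's version extracts both cases from a single identity. Either is perfectly fine.

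One imprecision in your sketch of step (i) deserves flagging: you write that ``the $\ln M$ and constant contributions to the $Q(F,F)$ term vanish by the collision invariance,'' but in the polyatomic model $\ln M$ is \emph{not} a collision invariant, since $\ln M$ contains the term $(\delta/2-1)\ln I$ and $I$ alone is not conserved (only $|v|^2/2+I$ is). The quantity that is a collision invariant is $\ln(I^{1-\delta/2}M)=-|v|^2/2-I-\ln\bigl((2\pi)^{3/2}\Gamma(\delta/2)\bigr)$, and the polyatomic $H$-theorem is stated as $\int Q(F,F)\ln(I^{1-\delta/2}F)\,dvdI\le 0$, not for $\ln F$ itself. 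The paper is careful to insert the $I^{1-\delta/2}$ weight precisely for this reason. Your final conclusion $\tfrac{d}{dt}\mathcal E(F(t))\le 0$ is still right, because the offending $(\delta/2-1)\ln I$ piece cancels between the $\ln F$ and $\ln M$ contributions; but the individual claims that $\int Q(F,F)\ln M\,dvdI=0$ and that $-\int Q(F,F)\ln F\,dvdI$ is the entropy dissipation are false as stated for polyatomic gases. Since you ultimately defer to the cited $H$-theorem this does not break the proof, but the intermediate bookkeeping should be corrected to use $\ln(I^{1-\delta/2}F)$ and $\ln(I^{1-\delta/2}M)$.
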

\begin{proof}
	It follows from Taylor's expansion that 
	\begin{align*}
		F \ln \left(I^{1-\delta/2} F\right) -M \ln \left(I^{1-\delta/2}M\right) = \left(1+ \ln \left(I^{1-\delta/2} M\right)\right)(F-M) + \frac{1}{2\tilde{F}} \vert F-M \vert^2,
	\end{align*} 
	where $\tilde{F}$ is between $F$ and $M$. Then, we compute 
	\begin{align*}
		\frac{1}{2\tilde{F}} \vert F-M \vert^2 = F \ln \left(I^{1-\delta/2} F\right) - M \ln \left(I^{1-\delta/2}M\right) - \left(1+ \ln \left(I^{1-\delta/2} M\right)\right)(F-M)=\psi \left( \frac{F}{M} \right) M, 
	\end{align*}
	where $\psi(x) = x \ln x - x +1$. Thus, 
	\begin{align} \label{relative esti} 
		\int_{\T^3 \times \R^3 \times \R_+} 	\frac{1}{2\tilde{F}} \vert F-M \vert^2 dxdvdI  = \int_{\T^3 \times \R^3 \times \R_+} \psi  \left( \frac{F}{M} \right) Mdx dv dI . 
	\end{align}
	For the left-hand side in \eqref{relative esti}, we divide 
	\begin{align*}
		1 = 1 \chi_{\{\vert F- M \vert \leq M\}} + 1 \chi_{\{\vert F- M \vert > M\}} .
	\end{align*}
	On $\{ \vert F - M \vert > M \}$, we have 
	\begin{align*}
		\frac{ \vert F -M  \vert}{\tilde{F}}  = \frac{F -M} {\tilde{F}} > \frac{F - \frac{1}{2} F}{F} = \frac{1}{2},
	\end{align*} 
	where we used $F> 2M$. On the other hand, over $\{ \vert F- M \vert \leq M \}$, we have $0\leq F \leq 2M$, which implies that 
	\begin{align*}
		\frac{1}{ \tilde {F}} \geq \frac{1}{2M}. 
	\end{align*}
	From \eqref{relative esti}, one obtains that 
	\begin{align*}
		&\int_{\mathbb{T}^3 \times \R^3 \times \R_+} \frac{1}{4M} \vert F-M  \vert^2 \chi_{\{\vert F-M \vert \leq M\}} dx dv dI + \int_{\T^3 \times \R^3 \times \R_+} \frac{1}{4} \vert F- M \vert \chi_{\{ \vert F-M \vert > M\}} dxdvdI \\
		&\leq  \int_{\T^3 \times \R^3 \times \R_+} \psi  \left( \frac{F}{M} \right) M dx dv dI .
	\end{align*}
	Due to $\psi' (x) = \ln x $, we can deduce from \eqref{poly be} that 
	\begin{align*}
		\p_t \left[ \psi \left( \frac{F}{M}\right) M \right] + \nabla_x \cdot  \left[\psi \left( \frac{F}{M} \right)M v\right] = Q(F,F) \ln \frac{F}{M} = Q(F,F) \ln \frac{I^{1-\delta/2} F} {I^{1-\delta/2} M}.
	\end{align*}
	Taking integration over $(x,v,I) \in \T^3 \times \R^3 \times \R_+ $, it follows from the collision invariant property \eqref{collisional inv} that
	\begin{align*}
		\frac{d}{dt} \int_{\mathbb{T}^3 \times \R^3 \times \R_+} \psi \left( \frac{F}{M} \right) M dxdvdI = \int_{\mathbb{T}^3 \times \R^3 \times \R_+} Q(F,F)\ln (I^{1-\delta/2} F) dxdvdI. 
	\end{align*}
	Because of the following inequality
	\begin{align*}
		\int_{\mathbb{T}^3 \times \R^3 \times \R_+} Q(F,F)\ln (I^{1-\delta/2} F) dxdvdI\leq 0, 
	\end{align*}
	we get 
	\begin{align} \label{relative esti 2}
		\int_{\mathbb{T}^3 \times \R^3 \times \R_+} \psi \left( \frac{F}{M} \right) M dxdvdI \leq \int_{\mathbb{T}^3 \times \R^3 \times \R_+} \psi \left( \frac{F_0}{M} \right) M dxdvdI =\mathcal{E}(F_0). 
	\end{align}
	Combining \eqref{relative esti} and \eqref{relative esti 2} yields that 
	\begin{align*}
		\int_{\mathbb{T}^3 \times \R^3 \times \R_+} \frac{1}{4M} \vert F-M  \vert^2 \chi_{\{\vert F-M \vert \leq M\}} dxdvdI + \int_{\T^3 \times \R^3 \times \R_+} \frac{1}{4} \vert F- M \vert \chi_{ \{\vert F-M \vert > M\}} dxdvdI \leq \mathcal{E}(F_0).
	\end{align*} 
\end{proof}
\section{A priori estimate}

In this section, we obtain the lower bound for a new operator $\mathcal{R}(f)$ and $L^\infty$ estimate for solutions $h(t,x,v,I)=w(v,I)f(t,x,v,I)$ under a priori assumption \eqref{HC}. To verify this, we rewrite the Boltzmann equation in \eqref{re BE} by introducing the new operator $\mathcal{R}(f)$, which is derived from combining $\Gamma_-(f,f)$ and $\nu(v,I)f$ in the following manner:
\begin{align*}
	\mathcal{R}(f)f:= \frac{1}{\sqrt{M}} \left [ Q_- (\sqrt{M}f, \sqrt{M}f) +Q_- (\sqrt{M}f, M)\right] = \Gamma_-(f,f) +\nu(v,I)f.
	\end{align*}
 The equation can be express by
\begin{align*}
	\p_t f + v\cdot \nabla_x f + \mathcal{R}(f) f =Kf+ \Gamma_+ (f,f).
\end{align*} 
To obtain $L^{\infty}$ estimate from the reformulated Boltzmann equation above, we check that the new operator $\mathcal{R}(f)$ is bounded below by collision frequency $\nu(v,I)$ for $t\in[\tilde{t},T_0]$, where $\tilde{t}$ will be defined in Lemma \ref{Rf est}.
\subsection{$\mathcal{R}(f)$ estimate}
In the following lemma, we use the equivalent form \eqref{BLCO} of the collision operator to obtain the lower bound for $\mathcal{R}(f)$ instead of \eqref{CO}. 
\begin{lemma} \label{Rf est}
	Assume that  
	\begin{align}\label{HC}
		\sup_{0\leq s\leq t}\|h(t)\|_{L^\infty} \leq \bar{M}.
	\end{align}
	Then, there exists a positive constant $C_{\delta,2}>0$ such that for any $T_0>\tilde{t}$ where
	\begin{align*}
		\tilde{t}:=\frac{1}{\nu_0}\ln\left(4C_{\beta,\nu_0} C_{\delta,2} M_0\right)>0,
	\end{align*}
	there also exists a small positive constant $\varepsilon=\varepsilon(\bar{M},T_0)>0$. Whenever $\mathcal{E}(F_0) \leq \varepsilon$, it holds that  
	\begin{align}
		\mathcal{R}(f)(t,x,v,I)\geq \frac{1}{2}\nu(v,I), \quad \forall (t,x,v,I) \in [\tilde{t},T_0]\times \T^3 \times \R^3 \times \R_+, 
	\end{align}
	where $\nu(v,I)$ is the collision frequency defined in \eqref{collision frequency}.
\end{lemma}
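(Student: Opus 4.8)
The plan is to write $\mathcal{R}(f) = \nu(v,I) + \Gamma_-(f,f)/f$-type expression more carefully: actually $\mathcal{R}(f)f = \Gamma_-(f,f) + \nu(v,I)f$, so it is cleaner to think of $\mathcal{R}(f)$ as an operator whose value at $(t,x,v,I)$ is
\begin{align*}
	\mathcal{R}(f)(t,x,v,I) = \int_{(\R^3\times\R_+)^3} W(v,v_*,I,I_*\vert v',v'_*,I',I'_*)\frac{M_* + \sqrt{M_*}f_*}{(II_*)^{\delta/2-1}}\, dv_*dv'dv'_*dI_*dI'dI'_* ,
\end{align*}
i.e. the loss-type integral against $M_* + \sqrt{M_*}f_* = F_*/\sqrt{M_*}\cdot\sqrt{M_*}$; note $M_*+\sqrt{M_*}f_* = F_*$ essentially (up to the $\sqrt{M_*}$ bookkeeping in the $K$/$\Gamma$ splitting). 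The key point is that when $f_*=0$ this integral is exactly $\nu(v,I)$, so $\mathcal{R}(f) - \nu(v,I)$ is the same integral with $M_*$ replaced by $\sqrt{M_*}f_*$. Hence it suffices to show that this "error" term is small in a pointwise sense, uniformly in $(t,x,v,I)\in[\tilde t,T_0]\times\T^3\times\R^3\times\R_+$, and more precisely that its absolute value is at most $\tfrac12\nu(v,I)$.

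First I would convert to the $\mathbb{S}^2\times[0,1]^2$ form \eqref{BLCO} as the lemma statement instructs, because there the loss kernel is a genuine function of $(v,v_*,I,I_*)$ (no primed variables after integrating the gain-representation), so that
\begin{align*}
	\mathcal{R}(f)(t,x,v,I) - \nu(v,I) = \int_{\R^3\times\R_+} \tilde k(v,v_*,I,I_*)\, \sqrt{M_*}\, f(t,x,v_*,I_*)\, dv_* dI_*
\end{align*}
for an explicit kernel $\tilde k$ controlled by $C\,(1+|v|+|v_*|+\sqrt I+\sqrt{I_*})^{2-\alpha}$ times the internal-energy weights appearing in \eqref{BLCO}. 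I would then split the $v_*$-integration into the "good" set $\{|f_*|\le$ suitable\} where we use the $L^\infty$ bound on $h=wf$, and the "bad" set where we use the relative entropy bound of Lemma \ref{relative entropy lemma}. On the good set, bound $|\sqrt{M_*}f_*| \le \sqrt{M_*}\,w(v_*,I_*)^{-1}\bar M$ and integrate: the Gaussian decay of $\sqrt{M_*}$ against the polynomial kernel gives a contribution $\le C\nu(v,I)\cdot(\text{tail of the }v_*\text{ integral})$, which is not yet small — so the good-set estimate alone is insufficient, and this is where the time cutoff $\tilde t$ enters.

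The mechanism for smallness is the exponential decay $\|h(t)\|_{L^\infty}\le C_{M_0}e^{-\vartheta t}$ — but at this stage of the argument that is only an a priori bound \eqref{HC} giving $\|h(s)\|_\infty\le\bar M$, not yet decay. Looking at the definition $\tilde t = \nu_0^{-1}\ln(4C_{\beta,\nu_0}C_{\delta,2}M_0)$, the intended route must be: from the \emph{linearized} decay estimate that will be proved (a Duhamel/bootstrap bound of the form $\|h(t)\|_\infty \lesssim e^{-\nu_0 t}\|h_0\|_\infty + (\text{nonlinear \& }K\text{ terms})$), for $t\ge\tilde t$ the linear part $e^{-\nu_0 t}M_0$ has already decayed below $1/(4C_{\delta,2})$; combined with the smallness coming from $\mathcal{E}(F_0)\le\varepsilon$ controlling the low-velocity bulk via Lemma \ref{relative entropy lemma}, one gets $|\mathcal{R}(f)-\nu|\le\tfrac12\nu$. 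Concretely I would: (i) use Hölder on the bad set, $\int_{\text{bad}}\sqrt{M_*}|f_*| = \int \sqrt{M_*}\,|f_*|\,\chi_{\{|F-M|>M\}}$-type quantity, and bound it by $\big(\int M_*\big)^{1/2}\big(\int |F_*-M_*|\chi_{\{|F-M|>M\}}\big)^{1/2}\lesssim \mathcal{E}(F_0)^{1/2}\le\varepsilon^{1/2}$ times the polynomial kernel, which is integrable against the Gaussian; (ii) on the $\{|F-M|\le M\}$ piece use the $L^2$ relative-entropy bound similarly, $\int \tilde k\sqrt{M_*}|f_*|\chi \le \big(\int \tilde k^2\big)^{1/2}\big(\int |f_*|^2 M_*\chi\big)^{1/2}\lesssim \mathcal{E}(F_0)^{1/2}$; (iii) on the remaining bulk where neither smallness applies — i.e. $v_*$ in a fixed large ball — use $|f_*|\le \bar M/w(v_*,I_*)$ together with $\sqrt{M_*}$ decay and the fact that for $t\ge\tilde t$ the a priori bound has been upgraded (in the surrounding argument) so that the relevant contribution is $\le \nu(v,I)/4$. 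Choosing $\varepsilon=\varepsilon(\bar M, T_0)$ small enough to absorb (i)–(ii) into the remaining $\nu(v,I)/4$ completes the bound $\mathcal{R}(f)\ge \nu - \tfrac12\nu = \tfrac12\nu$.

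The main obstacle is step (iii): making precise why the time threshold $\tilde t$ is \emph{exactly} $\nu_0^{-1}\ln(4C_{\beta,\nu_0}C_{\delta,2}M_0)$ and not something depending on $\bar M$ — this forces a careful separation between the part of $f$ that is genuinely controlled only by the crude a priori bound $\bar M$ (and must be beaten down by $e^{-\nu_0 t}$, hence the $\log M_0$) and the part controlled by the entropy (beaten down by $\varepsilon$). I expect one has to first establish a preliminary Duhamel inequality $\|h(t)\|_\infty \le C_{\beta,\nu_0}M_0 e^{-\nu_0 t} + (\text{terms small with }\varepsilon\text{ or with }\|h\|_\infty)$ valid on $[0,T_0]$, plug it into the kernel estimate, and only then read off that for $t\ge\tilde t$ the $M_0 e^{-\nu_0 t}$ term is $\le 1/(4C_{\delta,2})$ so that, after multiplying by the kernel mass $\le C_{\delta,2}\nu(v,I)$, it contributes at most $\nu(v,I)/4$. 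Getting the constants to line up — in particular checking that the kernel mass really is bounded by $C_{\delta,2}\nu(v,I)$ uniformly, using the explicit $\Phi$-power bookkeeping and internal-energy integrals exactly as in the proof of Lemma \ref{K esti} — is the routine-but-delicate computational core.
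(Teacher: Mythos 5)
Your reduction of the lemma to showing
\begin{align*}
\int_{\R^3\times\R_+}e^{-\frac{|v_*|^2}{4}-\frac{I_*}{4}}|h(t,x,v_*,I_*)|\,dv_*dI_*\le \frac{1}{2C_{\delta,2}}
\end{align*}
(after using $B\lesssim\nu(v,I)\nu(v_*,I_*)$ in the $\mathbb S^2\times[0,1]^2$ form), and your reading of $\tilde t$ as the time at which a $C M_0 e^{-\nu_0 t}$ Duhamel contribution drops below $1/(4C_{\delta,2})$, both match the paper. But there are two genuine gaps in the execution.

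First, in your steps (i)--(ii) you try to bound $\int \tilde k\sqrt{M_*}|f_*|\,dv_*dI_*$ at a \emph{fixed} $x$ by H\"older plus Lemma~\ref{relative entropy lemma}. This cannot work as written: Lemma~\ref{relative entropy lemma} only controls quantities integrated over $\T^3\times\R^3\times\R_+$, not pointwise in $x$. A bound of the form $\int_{\R^3\times\R_+}|f_*|^2\chi\,dv_*dI_*\lesssim\mathcal E(F_0)$ at a fixed $x$ is simply not available. The paper's way to make contact with relative entropy is different and essential: apply Duhamel to $h(t,x,v,I)$ \emph{inside} the weighted $L^1$ integral above, so that the offending term becomes $\int_0^{t-\lambda_1}e^{-\nu(t-s)}\int k_w(v,v_*,I,I_*)\,h(s,x-(t-s)v,v_*,I_*)\,dv_*dI_*ds$; then for $s\le t-\lambda_1$ the change of variables $y=x-(t-s)v$ converts the outer $dv$ integral into $\int_{\T^3}dy$ with Jacobian bounded by $(1+N(t-s)^3)/(t-s)^3\le \lambda_1^{-3}+N$. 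Only after this do the $x$-integral and the relative entropy speak to each other. (The same device handles the $w\Gamma_\pm$ contributions via Lemma~\ref{GE}.)

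Second, your step (iii) appeals to a ``preliminary Duhamel inequality'' $\|h(t)\|_\infty\le CM_0e^{-\nu_0t}+(\text{small})$. No such inequality is available at this stage: a bare Duhamel estimate for $\|h(t)\|_\infty$ produces $K_wh$ and $\Gamma$ contributions of size $O(\bar M)$ and $O(\bar M^2)$, which are not small, and upgrading them to small tails is precisely the content of Lemma~\ref{L infty est} --- which in turn \emph{uses} the present lemma to get $\mathcal R(f)\ge\tfrac12\nu$. The circularity is broken exactly by working with the weighted $L^1$ quantity rather than the $L^\infty$ norm: the initial-data term $\int e^{-\nu t}e^{-|v|^2/4-I/4}|h_0(x-tv,v,I)|\,dvdI\le Ce^{-\nu_0t}\|h_0\|_\infty$ already decays, the near-time slab $[t-\lambda_1,t]$ contributes $\lambda_1(\bar M+\bar M^2)$, the $N$-tails contribute $N^{-1/8}\bar M$, and the remaining bulk is controlled through the change-of-variables plus relative-entropy device as above. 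The threshold $\tilde t$ then depends only on $M_0$ (not $\bar M$) precisely because only the initial-data term carries an explicit time decay, while everything else is beaten down by $(\lambda_1,N,\lambda_2,\varepsilon)$ chosen after $\bar M$ is fixed.
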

\begin{proof}
	Let us recall the definition of $\mathcal{R}(f)$:
		\begin{align*}
		\mathcal{R}(f)=\nu(v,I)+\int_{(\R^3\times\R_+)^3}W(v,v_{*},I,I_{*} \vert v',v'_{*},I',I'_{*})\sqrt{M_*}f_*dv_{*}dv'dv'_{*}dI_{*}dI'dI'_{*}.
	\end{align*}  
	Using the equivalent form \eqref{BLCO} of the collision operator \eqref{CO}, we can express $\mathcal{R}(f)$ by
	\begin{align}\label{RF2}
		\mathcal{R}(f)=\nu(v,I)+\int_{\S^2 \times [0,1]^2 \times \R^3 \times \R_+}B\sqrt{M_*}f_*(r(1-r))^{\delta/2-1}(1-R)^{\delta-1}R^{1/2}d\omega drdRdv_*dI_*.
	\end{align}
	Through directly computing integrals with respect to $r$ and $R$, we have
	\begin{align*}
		\mathcal{R}(f)=\nu(v,I)+C_\delta\int_{\R^3 \times \R_+}B\sqrt{M_*}f_*dv_*dI_*,
	\end{align*}
	where the constant $C_\delta$ depends only on $\delta$. We also use following property
	\begin{align*}
		\left(\frac{|v-v_*|^2}{4}+I+I_*\right)^{1-\alpha/2}&\leq(1+|v|^2+I)^{1-\alpha/2}(1+|v_*|^2+I_*)^{1-\alpha/2}\cr
		&\leq(1+|v|+\sqrt{I})^{2-\alpha}(1+|v_*|+\sqrt{I_*})^{2-\alpha}\cr
		&\leq C\nu(v,I)\nu(v_*,I_*),
	\end{align*}
	where the last inequality was obtained by $\nu(v,I) \simeq (1+|v|+\sqrt{I})^{2-\alpha}$. Then, $\mathcal{R}(f)$ has lower bound as follows:
	\begin{align}\label{OP}
		\begin{split}
			\mathcal{R}(f)&\geq \nu(v,I)\left\{1-C_{\delta,1}\int_{\R^3 \times \R_+}\nu(v_*,I_*)\sqrt{M_*}|f(t,x,v_*,I_*)|dv_*dI_*\right\}\cr
			&\geq \nu(v,I)\left\{1-C_{\delta,2}\int_{\R^3 \times 	\R_+}e^{-\frac{|v_*|^2}{4}-\frac{I_*}{4}}|h(t,x,v_*,I_*)|dv_*dI_*\right\},
		\end{split}
	\end{align}
	where $h(t,x,v_*,I_*)=w(v_*,I_*)f(t,x,v_*,I_*)$.
	Therefore, our goal is to find the constant $C_{\delta,2}>0$ that satisfies the following condition:
\begin{align}\label{OP2}
	\int_{\R^3 \times \R_+}e^{-\frac{|v|^2}{4}-\frac{I}{4}}|h(t,x,v,I)|dvdI\leq \frac{1}{2C_{\delta,2}}.
\end{align}
First, applying Duhamel's principle to \eqref{re BE}, we express $h(t,x,v,I)$ as shown below:
\begin{align*}
	h(t,x,v,I)=e^{-\nu(v,I)t}h_0(x-tv,v,I)+\int_{0}^{t}e^{-\nu(v,I)(t-s)}\left[K_wh+w(v,I)\Gamma(f,f)\right](s,x-(t-s)v,v,I)ds.
\end{align*}
Then, from the above formulation, the left-hand side of \eqref{OP2} can be expressed by
	\begin{align}\label{R OP}
		\begin{split}
			&\int_{\R^3 \times \R_+}e^{-\frac{|v |^2}{4}-\frac{I }{4}}|h(t,x,v,I)|dv dI \cr
			&\leq\int_{\R^3 \times \R_+}e^{-\nu(v ,I )t}e^{-\frac{|v |^2}{4}-\frac{I }{4}}|h_0(x-tv,v ,I )|dv dI \cr
			&+\int_{t-\lambda_{1}}^{t}\int_{\R^3 \times \R_+}e^{-\nu(v ,I )(t-s)}e^{-\frac{|v |^2}{4}-\frac{I }{4}}\left| [K_wh+w(v ,I )\Gamma(f ,f )] (s,x-(t-s)v ,v ,I )\right|dv dI ds\cr
			&+\int_{0}^{t-\lambda_{1}}\int_{\R^3 \times \R_+}e^{-\nu(v ,I )(t-s)}e^{-\frac{|v |^2}{4}-\frac{I }{4}}\left|[K_wh+w(v ,I )\Gamma(f ,f )](s,x-(t-s)v ,v ,I )\right|dv dI ds,
		\end{split}
	\end{align}
	where $\lambda_{1}>0$ is a small positive constant to be chosen later. From now on, we estimate the right-hand side of \eqref{R OP}. For the initial term, we obtain though direct computation:
	\begin{align}\label{IT}
		\int_{\R^3 \times \R_+}e^{-\nu(v,I)t}e^{-\frac{|v|^2}{4}-\frac{I}{4}}|h_0(x-tv,v,I)|dvdI\leq C e^{-\nu_0 t}\|h_0\|_{L^\infty}. 
	\end{align} 
	\hide where the constant $C_\beta>0$ satisfies
	\begin{align*}
			\int_{\R^3 \times \R_+}\frac{1}{(1+|v|+\sqrt{I})^{2\beta}}dvdI&\leq \int_{\R^3 \times \R_+}\frac{1}{(1+|v|)^{\beta}(1+\sqrt{I})^{\beta}}dvdI\cr
			&\leq\int_{\R^3}\frac{1}{(1+|v|)^{\beta}}dv\int_{\R_+}\frac{1}{(1+\sqrt{I})^{\beta}}dI\cr
			&\leq C_\beta,
	\end{align*}
	for $\beta>7$. 
	\unhide
	For other terms, using the lower bound of collision frequency, Lemma \ref{K esti}, and Lemma \ref{nonlinear esti}, we can further control the following terms by 
	\begin{align}\label{OT}
		e^{-\nu(v ,I )(t-s)}\left|[K_wh+w(v ,I )\Gamma(f ,f )](s,x-(t-s)v,v ,I) \right| \leq  C_{\beta}e^{-\frac{\nu_0}{2}(t-s)}\left\{\|h(s)\|_{L^\infty}+\|h(s)\|^2_{L^\infty}\right\}.
	\end{align}
	Therefore, integrating \eqref{OT} over $[t-\lambda_{1},t]$ gives
	\begin{align*}
		&\int_{t-\lambda_{1}}^{t}\int_{\R^3 \times \R_+}e^{-\nu(v ,I )(t-s)}e^{-\frac{|v |^2}{4}-\frac{I }{4}}\left|[K_wh+w(v ,I )\Gamma(f ,f )](s,x-(t-s)v ,v ,I )\right|dv dI ds \cr
		&\leq C_{\beta}\int_{t-\lambda_{1}}^{t}e^{-\frac{\nu_0}{2}(t-s)}\left\{\|h(s)\|_{L^\infty}+\|h(s)\|^2_{L^\infty}\right\}ds \cr
		&\leq C_{\beta}\lambda_{1}\sup_{0\leq s\leq t}\left\{\|h(s)\|_{L^\infty}+\|h(s)\|^2_{L^\infty}\right\}.
	\end{align*}
	Henceforth, we consider the remaining part that is integration over $[0,t-\lambda_{1}]$. For this case, we split into four parts as follows:
	\begin{align*}
		(1)&\ |v|\geq N \quad \text{or}\quad I\geq N, \\
		(2)&\  |v|\leq N,\ |v_*|\geq 2N, \ I \leq N, \cr
		(3)&\ |v| \leq N, \ |v_*|\leq 2N,\ I \leq N, \ I_*\geq 2N,\cr
		(4)&\ |v|\leq N,\ |v_*|\leq 2N,\ I\leq N, \ I_*\leq 2N.
	\end{align*}
	For  case $(1)$, using Lemma \ref{K esti}, it holds that
	\begin{align}\label{C1}
		\begin{split}
				&\int_{0}^{t-\lambda_{1}}\int_{\R^3 \times \R_+}e^{-\nu(v,I)(t-s)}e^{-\frac{|v|^2}{4}-\frac{I}{4}}\left|K_wh(s,x-(t-s)v,v,I)\right|\left(\chi_{\{|v|\geq N\}}+\chi_{\{I\geq N\}}\right) dvdIds \cr
			&\leq
			\frac{C_{\beta}}{N}\int_{0}^{t-\lambda_{1}}e^{-\frac{\nu_0}{2}(t-s)}\|h(s)\|_{L^\infty}ds \cr
			&\leq \frac{C_{\beta,\nu_0}}{N} \sup_{0\leq s\leq t}\|h(s)\|_{L^\infty}.
		\end{split}
	\end{align}
	For case $(2)$, it holds that $|v-v_*|>N$. Then,
	\begin{align}\label{C2}
		\begin{split}
				&\int_{0}^{t-\lambda_{1}}\int_{|v|\leq N, |v_*|\geq 2N, I\leq N, I_* \in \R_+ }e^{-\nu(v,I)(t-s)}e^{-\frac{|v|^2}{4}-\frac{I}{4}}|k_wh(s,x-(t-s)v,v_*,I_*)|dv_*dI_*dvdIds\cr
			&\leq C_{\nu_0}\sup_{0\leq s\leq t}\|h(s)\|_{L^\infty}\int_{|v|\leq N, |v_*|\geq 2N, I\leq N, I_* \in \R_+ }e^{-\frac{|v|^2}{4}-\frac{I}{4}}|k_w(v,v_*,I,I_*)|dv_*dI_*dvdI\cr
			&\leq  C_{\nu_0}\sup_{0\leq s\leq t}\|h(s)\|_{L^\infty}\int_{|v|\leq N, |v_*|\geq 2N, I\leq N, I_* \in \R_+ }e^{-\frac{|v|^2}{4}-\frac{I}{4}}\frac{e^{\frac{|v-v_*|^2}{64}}}{e^{\frac{N^2}{64}}}|k_w(v,v_*,I,I_*)|dv_*dI_*dvdI\cr
			&\leq\frac{C_{\nu_0}}{N}\sup_{0\leq s\leq t}\|h(s)\|_{L^\infty}\int_{\R^3\times \R_+}e^{-\frac{|v|^2}{4}-\frac{I}{4}}\int_{\R^3\times \R_+}e^{\frac{|v-v_*|^2}{64}}| k_w(v,v_*,I,I_*) | dv_*dI_*dvdI\cr
			&\leq\frac{C_{\nu_0}}{N}\sup_{0\leq s\leq t}\|h(s)\|_{L^\infty},
		\end{split}
	\end{align}
	where the last inequality comes from Lemma \ref{K esti}. For case $(3)$, it is directly estimated by 
	\begin{align}\label{C3}
		\begin{split}
		&\int_{0}^{t-\lambda_{1}}\int_{|v|\leq N, |v_*|\leq 2N, I\leq N, I_* \geq 2N }e^{-\nu(v,I)(t-s)}e^{-\frac{|v|^2}{4}-\frac{I}{4}}|k_wh(s,x-(t-s)v,v_*,I_*)|dv_*dI_*dvdIds\cr
		&\leq\sup_{0\leq s\leq t}\|h(s)\|_{L^\infty}\int_{0}^{t-\lambda_{1}}e^{- \frac{\nu_0}{2}(t-s)}\int_{|v|\leq N, |v_*|\leq 2N, I\leq N, I_* \geq 2N }\left(\frac{1+I_*}{1+I_*}\right)^{1/8}|k_w(v,v_*,I,I_*)|dv_*dI_*dvdIds\cr
		&\leq\frac{C_{\nu_0}}{N^{1/8}}\sup_{0\leq s\leq t}\|h(s)\|_{L^\infty}.
		\end{split}
	\end{align}
	For case $(4)$, using H\"older's inequality and Lemma \ref{K esti}, we have
		\begin{align*}
			&\int_{0}^{t-\lambda_{1}}\int_{|v|\leq N, |v_*|\leq 2N,I\leq N,I_*\leq 2N  }e^{- \frac{\nu_0}{2}(t-s)}e^{-\frac{|v|^2}{4}-\frac{I}{4}}\left | k_wh(s,x-(t-s)v,v_*,I_*)\right | dv_*dI_*dvdIds\cr
			&\leq \int_{0}^{t-\lambda_{1}}e^{- \frac{\nu_0}{2}(t-s)}\left(\int_{|v|\leq N, |v_*|\leq 2N,I\leq N,I_*\leq 2N  }e^{-\frac{|v|^2}{2}-\frac{I}{2}}|k_w(v,v_*,I,I_*)|^2dv_*dI_*dvdI\right)^{1/2}\cr
			&\quad \times\left(\int_{|v|\leq N,|v_*|\leq 2N,I\leq N,I_*\leq 2N  }e^{-\frac{|v|^2}{2}-\frac{I}{2}}|h(s,x-(t-s)v,v_*,I_*)|^2dv_*dI_*dvdI\right)^{1/2}ds\cr
			&\leq C\int_{0}^{t-\lambda_{1}}e^{- \frac{\nu_0}{2}(t-s)} \left(\int_{|v|\leq N, |v_*|\leq 2N,I\leq N,I_*\leq 2N  }e^{-\frac{|v|^2}{2}-\frac{I}{2}}|h(s,x-(t-s)v,v_*,I_*)|^2dv_*dI_*dvdI\right)^{1/2}ds. 
		\end{align*}
To estimate the above term, we use the change of variable $y:=x-(t-s)v$ to obtain the following estimate 
		\begin{align}\label{Kn}
			\begin{split}
				&\int_{|v|\leq N, |v_*|\leq 2N,I\leq N,I_*\leq 2N  }e^{-\frac{|v|^2}{2}-\frac{I}{2}}|h(s,x-(t-s)v,v_*,I_*)|^2dv_*dI_*dvdI\cr
				&\leq C_{\beta}\frac{1+N(t-s)^3}{(t-s)^3}\int_{\mathbb{T}^3}\int_{|v_*|\leq2N,I\leq N,I_*\leq2N} e^{-\frac{|v|^2}{2}-\frac{I}{2}}w^2(v_*,I_*)|f(s,y,v_*,I_*)|^2dv_*dI_*dIdy\cr
				&\leq C_{\beta}\left(\frac{1}{\lambda_{1}^3}+N\right)\int_{\mathbb{T}^3}\int_{|v_*|\leq2N,I\leq N,I_*\leq2N}| f(s,y,v_*,I_*)|^2dv_*dI_*dIdy.
			\end{split}
		\end{align}
		To finish the estimate for this term, we split it into two parts as:
		\begin{align}\label{F2}
			\begin{split}
				&\int_{\mathbb{T}^3}\int_{|v_*|\leq2N,I_*\leq2N}|f(s,y,v_*,I_*)|^2dv_*dI_*dy\cr
				& =\int_{\mathbb{T}^3}\int_{|v_*|\leq2N,I_*\leq\lambda_{2}}|f(s,y,v_*,I_*)|^2dv_*dI_*dy+ \int_{\mathbb{T}^3}\int_{|v_*|\leq2N,\lambda_{2}\leq I_*\leq2N}|f(s,y,v_*,I_*)|^2dv_*dI_*dy\cr
				&:=I_1+I_2
			\end{split}
	\end{align}
	For $I_1$, this is computed easily as follows: 
	\begin{align}\label{I1}
		I_1&\leq\lambda_{2}\|h(s)\|^2_{L^\infty}\int_{\mathbb{T}^3}\int_{|v_*|\leq2N}\frac{1}{(1+|v_*|)^{2\beta}}dv_*dy\leq \lambda_{2} C_{\beta}\|h(s)\|^2_{L^\infty}.
	\end{align}
	Applying Lemma \ref{relative entropy lemma} to $I_2$, one obtains that 
	\begin{align}\label{II2}
		\begin{split}
			I_2&\leq \int_{\mathbb{T}^3}\int_{|v_*|\leq2N,\lambda_{2}\leq I_*\leq2N}|f(s,y,v_*,I_*)|^2dv_*dI_*dy\cr
			&\quad+\int_{\mathbb{T}^3}\int_{|v_*|\leq2N,\lambda_{2}\leq 	I_*\leq2N}|f(s,y,v_*,I_*)|^2\left(\chi_{\{ \vert \sqrt{M_*}f_* \vert \leq M_*\}} + \chi_{\{\vert \sqrt{M_*}f_* \vert >M_*\}}\right)dv_*dI_*dy \cr
			&\leq  \mathcal{E}(F_0)+ \sup_{0\leq s \leq t} \Vert h(s) \Vert_{L^\infty} 	\int_{\mathbb{T}^3}\int_{|v_*|\leq2N,\lambda_{2}\leq I_*\leq2N} \frac{\sqrt{M_*}}{\sqrt{M_*}}\vert f(s', y, v_*,I_*)\vert \chi_{\{\vert \sqrt{M_*}f_* \vert > M_*\}}dv_*dI_*dy\cr
			&\leq\mathcal{E}(F_0)+C\frac{e^{2N}}{\lambda_{2}^{(\delta-2)/4}}  \sup_{0\leq s \leq t} \Vert h(s) 	\Vert_{L^\infty}  \mathcal{E}(F_0).
		\end{split}
	\end{align}
	Thus, \eqref{Kn} can be futrher bounded by \eqref{I1} and \eqref{II2}
		\begin{align*}
		&\int_{|v|\leq N,|v_*|\leq 2N,I\leq N,I_*\leq 2N  }e^{-\frac{|v|^2}{4}-\frac{I}{4}}|h(s,x-(t-s)v,v_*,I_*)|^2dvdIdv_*dI_*ds\cr
		&\leq C_{\beta}\left(\frac{1}{\lambda_{1}^3}+N\right)\left(\lambda_{2}\sup_{0\leq s\leq t}\|h(s)\|^2_{L^\infty}+\mathcal{E}(F_0)+\frac{e^{2N}}{\lambda_{2}^{(\delta-2)/4}}  \sup_{0\leq s \leq t} \Vert h(s) \Vert_{L^\infty}  \mathcal{E}(F_0)\right).
	\end{align*}
	In conclusion, we have
		\begin{align}\label{C4}
			\begin{split}
					&\int_{0}^{t-\lambda_{1}}\int_{|v|\leq N, |v_*|\leq 2N,I\leq N,I_*\leq 2N  }e^{- \frac{\nu_0}{2}(t-s)}e^{-\frac{|v|^2}{4}-\frac{I}{4}}\left | k_wh(s,x-(t-s)v,v_*,I_*)\right | dv_*dI_*dvdIds\cr &\leq C_{\beta,\nu_0}\left(\frac{1}{\lambda_{1}^3}+N\right)^{1/2}\left(\lambda_{2}\sup_{0\leq s\leq t}\|h(s)\|^2_{L^\infty}+\mathcal{E}(F_0)+\frac{e^{2N}}{\lambda_{2}^{(\delta-2)/4}}  \sup_{0\leq s \leq t} \Vert h(s) \Vert_{L^\infty}  \mathcal{E}(F_0)\right)^{1/2}.
			\end{split}
	\end{align}
	Combining \eqref{C1}, \eqref{C2}, \eqref{C3} and \eqref{C4}, then we have
		\begin{align}\label{Rkw}
			\begin{split}
				&\int_{0}^{t-\lambda_{1}}\int_{\R^3\times\R_+ }e^{-\nu(v,I)(t-s)}e^{-\frac{|v|^2}{4}-\frac{I}{4}}\left|K_wh(s,x-(t-s)v,v,I)\right|dvdIds \cr
				&\leq \frac{C_{\beta,\nu_0}}{N} \sup_{0\leq s\leq t}\|h(s)\|_{L^\infty}+\frac{C_{\nu_0}}{N}\sup_{0\leq s\leq t}\|h(s)\|_{L^\infty}+\frac{C_{\nu_0}}{N^{1/8}}\sup_{0\leq s\leq t}\|h(s)\|_{L^\infty}\cr
				&\quad+C_{\beta\nu_0}\left(\frac{1}{\lambda_{1}^3}+N\right)^{1/2}\left(\lambda_{2}\sup_{0\leq s\leq t}\|h(s)\|^2_{L^\infty}+\mathcal{E}(F_0)+\frac{e^{2N}}{\lambda_{2}^{(\delta-2)/4}}  \sup_{0\leq s \leq t} \Vert h(s) \Vert_{L^\infty}  \mathcal{E}(F_0)\right)^{1/2}.
			\end{split}
	\end{align}
	From now on, we consider the remaining term, which is the non-linear part. Using Lemma \ref{GE}, we can estimate the part $w\Gamma_+(f,f)$ as follows:
	\begin{align}\label{OI}
		\begin{split}
			&\int_{0}^{t-\lambda_{1}}e^{-\nu(v,I)(t-s)}\int_{|v|\leq N,I\leq N}e^{-\frac{|v|^2}{4}-\frac{I}{4}} |w(v,I)\Gamma_+(f,f)(s,x-(t-s)v,v)|dvdIds\cr
			&\leq C\int_{0}^{t-\lambda_{1}}e^{- \frac{\nu_0}{2}(t-s)}\int_{|v|\leq N,I\leq N}e^{-\frac{|v|^2}{4}-\frac{I}{4}}\\
			&\quad \times \frac{\sup_{0\leq s \leq t}\|h(s)\|_{L^\infty}}{1+|v|+I^{1/4}} \left(\int_{\R^3 \times \R_+} (1+\vert \eta \vert + \sqrt{L})^{-2\beta+8} \vert wf(\eta,L) \vert^2 d\eta dL \right)^{1/2}dvdIds\cr
			&\leq C\sup_{0\leq s \leq t}\|h(s)\|_{L^\infty}\int_{0}^{t-\lambda_{1}}e^{- \frac{\nu_0}{2}(t-s)}\int_{|v|\leq N,I\leq N}e^{-\frac{|v|^2}{4}-\frac{I}{4}}\\
			&\quad \times \left(\int_{\R^3 \times \R_+} (1+\vert \eta \vert + \sqrt{L})^{-2\beta+8} \vert wf(\eta,L) \vert^2 d\eta dL \right)^{1/2}dvdIds.
		\end{split}
	\end{align}
	We consider following two parts:
\begin{align}\label{GE1}
			\begin{split}
				&\int_{|v|\leq N,I\leq N}e^{-\frac{|v|^2}{4}-\frac{I}{4}}\left(\int_{\R^3 \times \R_+} (1+\vert \eta \vert + \sqrt{L})^{-2\beta+8} \vert h(\eta,L) \vert^2 d\eta dL \right)^{1/2}dvdI\cr
				&\leq C\int_{|v|\leq N,I\leq N}e^{-\frac{|v|^2}{4}-\frac{I}{4}}\\
				&\quad \times \left\{\left(\int_{\R^3 \times \R_+}(\chi_{\{|\eta| >N\}}+\chi_{\{L>N\}})+\int_{|\eta|\leq N,L\leq N} \right)\frac{1}{(1+\vert \eta \vert + \sqrt{L})^{2\beta-8}} \vert h(\eta,L) \vert^2 d\eta dL\right\}^{1/2}dvdI.
			\end{split}
	\end{align}
	For $\beta>7$, the first term in \eqref{GE1} can be bounded by
	\begin{align}\label{GE2}
			\begin{split}
				&\int_{|v|\leq N,I\leq N}e^{-\frac{|v|^2}{4}-\frac{I}{4}}\int_{\R^3 \times \R_+}\frac{1}{(1+\vert \eta \vert + \sqrt{L})^{2\beta-8}} \vert h(\eta,L) \vert^2(\chi_{\{|\eta| >N\}}+\chi_{\{L>N\}}) d\eta dLdvdI \cr
				& \leq \sup_{0\leq s \leq t}\|h(s)\|^2_{L^\infty}\int_{\R^3 \times \R_+}\frac{1}{(1+\vert \eta \vert + \sqrt{L})^{2\beta-9}}\frac{1}{(1+\vert \eta \vert + \sqrt{L})}(\chi_{\{|\eta| >N\}}+\chi_{\{L>N\}})d\eta dL\cr
				&\leq\frac{\sup_{0\leq s \leq t}\|h(s)\|^2_{L^\infty}}{N^{1/2}}\int_{\R^3 \times \R_+}\frac{1}{(1+\vert \eta \vert)^{3+(\beta -7)}(1+\vert \eta \vert + \sqrt{L})^{\beta-5}} (\chi_{\{|\eta| >N\}}+\chi_{\{L>N\}})d\eta dL\cr
				& \leq \frac{C_\beta\sup_{0\leq s \leq t}\|h(s)\|^2_{L^\infty}}{N^{1/2}}.
			\end{split}
	\end{align}
	We use H\"older's inequality to estimate the remaining term in \eqref{GE1}, and then
\begin{align}\label{GEH}
	\begin{split}	
		&\int_{|v|\leq N,I\leq N}e^{-\frac{|v|^2}{4}-\frac{I}{4}}\left(\int_{|v|\leq N,I\leq N} (1+\vert \eta \vert + \sqrt{L})^{-2\beta+8} \vert h(\eta,L) \vert^2 d\eta dL \right)^{1/2}dvdI\cr
		&\quad\leq C\left(\int_{|v|\leq N,I\leq N}e^{-\frac{|v|^2}{2}-\frac{I}{2}}dvdI\right)^2\left\{\int_{|v|\leq N,I\leq N,|\eta|\leq N,L\leq N}\frac{1}{(1+\vert \eta \vert + \sqrt{L})^{2\beta-8}} \vert h(\eta,L) \vert^2 d\eta dLdvdI\right\}^{1/2}\cr
		&\quad\leq \left\{\int_{|v|\leq N,I\leq N,|\eta|\leq N,L\leq N}\frac{1}{(1+\vert \eta \vert + \sqrt{L})^{2\beta-8}} \vert h(\eta,L) \vert^2 d\eta dLdvdI\right\}^{1/2}.
	\end{split}
	\end{align}
	For \eqref{GEH}, we compute the following integrals 
	\begin{align*}
		&\int_{|v|\leq N,I\leq N,|\eta|\leq N,I\leq N}\frac{w^2(\eta,L)}{(1+\vert \eta \vert + \sqrt{L})^{2\beta-8}} \vert f(\eta,L) \vert^2 d\eta dLdvdI \cr
		&\leq (1+2N)^8\int_{|v|\leq N,I\leq N,|\eta|\leq N,L\leq N}\vert f(s,x-(t-s)v,\eta,L) \vert^2 d\eta dLdvdI.
	\end{align*}
	Using the change of variable $y:=x-(t-s)v$, we have
	\begin{align*}
		&\int_{|v|\leq N,I\leq N,|\eta|\leq N,L\leq N}\vert f(s,x-(t-s)v,\eta,L) \vert^2 d\eta dLdvdI \cr
		&\leq C\frac{1+N(t-s)^3}{(t-s)^3}\int_{\mathbb{T}^3}\int_{I\leq N, |\eta|\leq N,L\leq N}\vert f(s,y,\eta,L) \vert^2 d\eta dLdIdy.
	\end{align*}
	From the estimate obtained in \eqref{II2}, we get
	\begin{align}\label{GE3}
		\begin{split}
			&\int_{|v|\leq N,I\leq N,|\eta|\leq N,Lc\leq N}\frac{w^2(\eta,L)}{(1+\vert \eta \vert + \sqrt{L})^{2\beta-8}} \vert f(\eta,L) \vert^2 d\eta dLdvdI \cr
			&\leq (1+2N)^8\frac{1+N(t-s)^3}{(t-s)^3}\left\{\lambda_{2}\sup_{0\leq s\leq t}\|h(s)\|^2_{L^\infty}+\mathcal{E}(F_0)+\frac{e^{2N}}{\lambda_{2}^{(\delta-2)/4}}  \sup_{0\leq s \leq t} \Vert h(s) 	\Vert_{L^\infty}  \mathcal{E}(F_0)\right\}.
		\end{split}
	\end{align}
	After combining \eqref{GE2} and \eqref{GE3}, and inserting them into \eqref{OI}, we have
	\begin{align}\label{RG+}
		\begin{split}
				&\sup_{0\leq s \leq t}\|h(s)\|_{L^\infty}\int_{0}^{t-\lambda_{1}}e^{- \frac{\nu_0}{2}(t-s)}\int_{|v|\leq N,I\leq N}e^{-\frac{|v|^2}{4}-\frac{I}{4}}\\
			&\quad \times \left(\int_{\R^3 \times \R_+} (1+\vert \eta \vert + \sqrt{L})^{-2\beta+8} \vert wf(\eta,L) \vert^2 d\eta dL \right)^{1/2}dvdIds\cr
			&\leq C_{\beta,\nu_0}(1+2N)^4\left(\frac{1}{\lambda_{1}^3}+N\right)^{1/2} \sup_{0\leq s \leq t}\|h(s)\|_{L^\infty}\\
			&\quad \times \left\{\frac{C_\beta}{N^{1/4}}\sup_{0\leq s \leq t}\|h(s)\|^2_{L^\infty}+\lambda_{2}\sup_{0\leq s\leq t}\|h(s)\|^2_{L^\infty}+\mathcal{E}(F_0)+\frac{e^{2N}}{\lambda_{2}^{(\delta-2)/4}}  \sup_{0\leq s \leq t} \Vert h(s) 	\Vert_{L^\infty}  \mathcal{E}(F_0)\right\}^{1/2}.
		\end{split}
	\end{align} 
	To estimate the part $w\Gamma_-(f,f)$, we consider the form \eqref{BLCO} of the collision operator. Since the collision frequency $\nu(v,I)$ is equivalent to $(1+|v|+\sqrt{I})^{2-\alpha}$, we have
	\begin{align*}
		|w(v,I)\Gamma_-(f,f)(s,x-(t-s)v,v,I)|&\leq\|h(s)\|_{L^\infty}\int_{\R^3\times\R_+}\left(\frac{|v-v_*|^2}{2}+I+I_*\right)^{1-\alpha/2}\sqrt{M_*}|f_*|dv_*dI_*\cr
		&\leq C\|h(s)\|_{L^\infty}\nu(v,I)\int_{\R^3\times\R_+}\nu(v_*,I_*)\sqrt{M_*}|f_*|dv_*dI_*.
	\end{align*}
	It follows from H\"older's inequality that
	\begin{align*}
		&\int_{\R^3\times\R_+}\nu(v_*,I_*)\sqrt{M_*}|f_*|dv_*dI_*\\
		&\leq\left(\int_{\R^3 \times \R_+}\nu^2(v_*,I_*)M_*dv_*dI_*\right)^{1/2}\left(\int_{\R^3 \times \R_+}|f(s,x-(t-s)v,v_*,I_*)|^2dv_*dI_*\right)^{1/2}\cr
		&\leq C\left(\int_{\R^3 \times \R_+}|f(s,x-(t-s)v,v_*,I_*)|^2dv_*dI_*\right)^{1/2}.
	\end{align*}
	Thus, by using a similar approach to dealing with the part $w\Gamma_+(f,f)$, we can treat  $w\Gamma_-(f,f)$ as follows:
	\begin{align}\label{RG-}
		\begin{split}
				&\int_{0}^{t-\lambda_{1}}e^{-\nu(v,I)(t-s)}\int_{|v|\leq N,I\leq N}e^{-\frac{|v|^2}{4}-\frac{I}{4}}|w(v,I)\Gamma_-(f,f)(s,x-(t-s)v,v,I)|dvdIds\cr
		&\leq C\sup_{0\leq s \leq t}\|h(s)\|_{L^\infty}\int_{0}^{t-\lambda_{1}}e^{- \frac{\nu_0}{2}(t-s)}\int_{|v|\leq N,I\leq N}e^{-\frac{|v|^2}{4}-\frac{I}{4}}\nu(v,I)\cr
		&\quad \times\left(\int_{\R^3 \times \R_+}|f(s,x-(t-s)v,v_*,I_*)|^2dv_*dI_*\right)^{1/2}dvdIds\cr
		&\leq C_{\beta,\nu_0}\left(\frac{1}{\lambda_{1}^3}+N\right)\sup_{0\leq s\leq t}\|h(s)\|_{L^\infty}\\
		&\quad \times \left\{\frac{C_\beta}{N^{1/4}}\sup_{0\leq s \leq t}\|h(s)\|^2_{L^\infty}+\lambda_{2}\sup_{0\leq s\leq t}\|h(s)\|^2_{L^\infty}+\mathcal{E}(F_0)+\frac{e^{2N}}{\lambda_{2}^{(\delta-2)/4}}  \sup_{0\leq s \leq t} \Vert h(s) 	\Vert_{L^\infty}  \mathcal{E}(F_0)\right\}^{1/2}.
		\end{split}
	\end{align}
	In conclusion, combining \eqref{IT}, \eqref{OT}, \eqref{Rkw} \eqref{RG+} and \eqref{RG-},  and using \eqref{HC}, we get
	\begin{align} \label{Rf esti final} 
		\begin{split}
		&\int_{\R^3\times\R_+}e^{-\frac{|v|^2}{4}-\frac{I}{4}}|h(t,x,v,I)|dvdI  \\ 
		&\leq C_{\beta,\nu_0} e^{-\nu_0 t}\|h_0\|_{L^\infty}  \\
		&\quad + C_{\beta,\nu_0}\lambda_{1}\left\{\bar{M}+\bar{M}^2\right\} +\frac{C_{\beta,\nu_0}}{N}\bar{M}+ \frac{C_{\nu_0}}{N}\bar{M}+\frac{C_{\beta,\nu_0}}{N^{1/8}}\bar{M}\cr
		&\quad+ C_{\beta,\nu_0}\left(\frac{1}{\lambda_{1}^3}+N\right)^{1/2}\left\{\lambda_{2}\bar{M}^2+\mathcal{E}(F_0)+\frac{e^{2N}}{\lambda_{2}^{(\delta-2)/4}}  \bar{M} \mathcal{E}(F_0)\right\}^{1/2}\cr
		&\quad+C_{\beta,\nu_0}(1+2N)^4\left(\frac{1}{\lambda_{1}^3}+N\right)^{1/2}\bar{M}\left\{\frac{C_\beta}{N^{1/4}}\bar{M}+\lambda_{2}\bar{M}^2+\mathcal{E}(F_0)+\frac{e^{2N}}{\lambda_{2}^{(\delta-2)/4}}  \bar{M} \mathcal{E}(F_0)\right\}^{1/2}\cr
		&\quad +C_{\beta,\nu_0}\left(\frac{1}{\lambda_{1}^3}+N\right)\bar{M}\left\{\frac{C_\beta}{N^{1/4}}\bar{M}^2+\lambda_{2}\bar{M}^2+\mathcal{E}(F_0)+\frac{e^{2N}}{\lambda_{2}^{(\delta-2)/4}}  \bar{M} \mathcal{E}(F_0)\right\}^{1/2}\cr
		&:= J_1 + J_2.
		\end{split}
	\end{align}
	Here, we choose the time $\tilde{t}=\frac{1}{\nu_0}\ln\left(4C_{\beta,\nu_0} C_{\delta,2} M_0\right)$ so that
	\begin{align*}
			J_1 \leq \frac{1}{4C_{\delta,2}}, \quad t \geq \tilde{t},
	\end{align*}
		where 
	\begin{align*}
			J_1:=C_{\beta,\nu_0} e^{-\nu_0 t}\|h_0\|_{L^\infty}.
	\end{align*}
	To get the desired result, we firstly choose $\lambda_{1}>0$ sufficiently small depending on $\bar{M}$. Also, we choose $N$ large enough depending on $\lambda_1$ and $\bar{M}$, then we choose the constant $\lambda_{2}>0$ small enough depending on $\lambda_{1}$, $N$ and $\bar{M}$. Finally, we choose $\varepsilon$ sufficiently small satisfying $\mathcal{E}(F_0) \leq \varepsilon$ so that 
	\begin{align*}
		J_2  \leq \frac{1}{4C_{\delta,2}}.
	\end{align*}
	In conclusion, we get the following that
	\begin{align*}
		\int_{\R^3\times\R_+}e^{-\frac{|v|^2}{4}-\frac{I}{4}}|h(t,x,v,I)|dvdI \leq \frac{1}{2C_{\delta,2}}, \quad \forall (t,x) \in [\tilde{t},T_0]\times \T^3. 
	\end{align*}
	This proof is complete.
\end{proof}

\subsection{$L^\infty$ estimate}
To treat $L^{\infty}$ estimate over $[0,T_0]$, we apply Duhamel's principle to the reformulated Boltzmann equation 
\begin{align*}
	\p_t f + v\cdot \nabla_x f + \mathcal{R}(f) f =Kf +\Gamma_+ (f,f).
\end{align*} 
By multiplying the velocity weight function $w(v,I)$ onto the equation above, we can express $h(t,x,v,I) = w(v,I)f(t,x,v,I)$ from Duhamel's principle: 
\begin{align*}
	h(t,x,v,I) = e^{-\int_0^t \mathcal{R}(f)(s,x-(t-s)v,v) ds} h(0,x-tv,v) + \int_0^t e^{-\int_s^t \mathcal{R}(f)(\tau, x-(t-\tau)v,v)d\tau} \left[K_w h(s)+ w \Gamma_+(f,f)(s)\right] ds,
\end{align*}
where the operator $K_w$ is given by 
\begin{align} \label{kw}
	K_w f : = \int_{\R^3 \times \R_+} k_w(v,v_*,I,I_*) f(t,x,v_*,I_*) dv_* dI_*. 
\end{align}
\begin{lemma} \label{L infty est}
	Assume \eqref{HC} in Lemma \ref{Rf est} and $\mathcal{E}(F_0) \leq \varepsilon.$ Let $h=h(t,x,v,I)$ be the solution to the reformulated Boltzmann equation 
	\begin{align*}
		\p_t h + v \cdot \nabla_x h + \mathcal{R}(f) h = K_w h + w \Gamma_+(f,f).
	\end{align*}
	Then, for all $(t,x,v,I) \in [0,T_0]\times \T^3 \times \R^3 \times \R_+$, we have 
	\begin{align} \label{main estimate}
		\begin{split}
			\vert h(t,x,v,I) \vert &\leq \bar{C}e^{-\frac{\nu_0}{2}(t-\tilde{t})}\Vert h_0 \Vert_{L^\infty}\left(1+\int_{0}^{t}\|h(s)\|_{L^\infty}ds\right)\cr
			&\quad +\bar{C}e^{\frac{5\nu_0}{4}\tilde{t}}\left(\sup_{0\leq s \leq t}\|h(s)\|_{L^\infty}+\sup_{0\leq s \leq t}\|h(s)\|^2_{L^\infty}+\sup_{0\leq s \leq t}\|h(s)\|^3_{L^\infty}\right)\left[\lambda_{1}+\frac{1}{N}+C_{N,\lambda_{1}}\sqrt{\lambda_{2}}\right]\cr
			&\quad +\bar{C}e^{\nu_0 \tilde{t}} \sqrt{\mathcal{E}(F_0)}\\
			&\quad \quad \times \left[ \left\{C_{N,\lambda_1}\left(1+\sup_{0\leq s \leq t} \Vert h(s) \Vert_{L^{\infty}}\right) +C_{N,\lambda_1,\lambda_2} \left(\sup_{0\leq s \leq t} \Vert h(s) \Vert_{L^{\infty}}^{1/2}+\sup_{0\leq s \leq t} \Vert h(s) \Vert_{L^{\infty}}^{3/2}\right)\right\}\right]\cr
			&\quad +\bar{C}e^{\nu_0\tilde{t}}\sqrt{\mathcal{E}(F_0)}\\
			&\quad \quad \times \left(\sup_{0\leq s \leq t}\|h(s)\|_{L^\infty}+\sup_{0\leq s \leq t}\|h(s)\|^2_{L^\infty}\right)\left\{C_{N,\lambda_{1}}\left(1+C_{N,\lambda_{2}} \sup_{0\leq s \leq t} \Vert h(s) 	\Vert_{L^\infty}\right)\right\}^{\frac{1}{2}},
		\end{split}
	\end{align}  
	where $\lambda_1>0, \lambda_2>0$ and $\varepsilon>0$ can be arbitrarily small, and $N>0$ can be arbitrarily large. 
\end{lemma}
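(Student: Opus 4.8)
The strategy is the ``double Duhamel'' scheme: iterate the mild formulation for $h$ twice, trade the velocity decay of the kernel $k_w$ against the streaming Jacobian by a change of variables, and control the resulting spatial $L^2_x$ integrals of $f$ by the initial relative entropy through Lemma~\ref{relative entropy lemma}, exactly as in the proof of Lemma~\ref{Rf est}.

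First I would fix $t\in[0,T_0]$ and start from the Duhamel representation of $h$ stated just before the lemma. For the exponential factor I split $[0,t]$ at $\tilde t$: on $[0,\tilde t]$ only $\mathcal{R}(f)\ge 0$ is available, giving $e^{-\int_s^t\mathcal{R}(f)\,d\tau}\le 1$, while on $[\tilde t,t]$ Lemma~\ref{Rf est} yields $\mathcal{R}(f)\ge\tfrac12\nu\ge\tfrac{\nu_0}{2}$ and hence $e^{-\int_s^t\mathcal{R}(f)\,d\tau}\le e^{-\frac{\nu_0}{2}(t-\max(s,\tilde t))}$. This already bounds the free term by $\bar Ce^{-\frac{\nu_0}{2}(t-\tilde t)}\|h_0\|_{L^\infty}$; the prefactors $e^{\frac{5\nu_0}{4}\tilde t}$ and $e^{\nu_0\tilde t}$ multiplying the remaining terms of \eqref{main estimate} simply record the loss incurred on the initial layer $[0,\tilde t]$, where the good lower bound is not yet active. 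Next, in $\int_0^t e^{-\int_s^t\mathcal{R}(f)\,d\tau}\,w\Gamma_+(f,f)(s)\,ds$ I invoke Lemma~\ref{GE} to bound $|w\Gamma_+(f,f)(s)|$ by $\tfrac{\|h(s)\|_{L^\infty}}{1+|v|+I^{1/4}}$ times $\bigl(\int_{\R^3\times\R_+}(1+|v_*|+\sqrt{I_*})^{-2\beta+8}|h(s,x-(t-s)v,v_*,I_*)|^2\,dv_*dI_*\bigr)^{1/2}$; I cut the $(v_*,I_*)$-integral at $|v_*|,I_*\le N$ (the complement costing $O(N^{-1/4})\sup\|h\|^2_{L^\infty}$, using $\beta>7$), then on the truncated set apply H\"older's inequality in $v$, perform the change of variables $v\mapsto y:=x-(t-s)v$ (Jacobian $(t-s)^{-3}$, handled by a cutoff $t-s\ge\lambda_1$, the short-time part being estimated crudely via Lemma~\ref{nonlinear esti}), and arrive at $\int_{\T^3}\int_{|v_*|\le N,I_*\le N}|f(s,y,v_*,I_*)|^2\,dv_*dI_*dy$.

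That spatial integral is split exactly as in \eqref{F2}--\eqref{II2}: the part $I_*\le\lambda_2$ is $\le\lambda_2\,C_\beta\sup\|h\|^2_{L^\infty}$, and the part $I_*\ge\lambda_2$ is $\le\mathcal{E}(F_0)+C e^{2N}\lambda_2^{-(\delta-2)/4}\sup\|h\|_{L^\infty}\mathcal{E}(F_0)$ by Lemma~\ref{relative entropy lemma}. It remains to treat $\int_0^t e^{-\int_s^t\mathcal{R}(f)\,d\tau}K_w h(s)\,ds$, for which a single application of $k_w$ does not produce a spatial integral; here I substitute the Duhamel formula for $h(s)$ a second time. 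This generates (i) $K_w$ applied to the free data, which after integrating \eqref{kw esti} and the time variable gives $\lesssim e^{-\frac{\nu_0}{2}(t-\tilde t)}\|h_0\|_{L^\infty}\int_0^t\|h(s)\|_{L^\infty}\,ds$ --- the source of the factor $1+\int_0^t\|h(s)\|_{L^\infty}\,ds$; (ii) the genuine double-kernel term $\int_0^t\!\int_0^s e^{-\int_s^t\mathcal{R}\,d\tau'}e^{-\int_\tau^s\mathcal{R}\,d\tau'}\int k_w(v,v_*)k_w(v_*,\eta)\,h(\tau,x-(t-s)v-(s-\tau)v_*,\eta,L)$; and (iii) $K_w$ applied to $w\Gamma_+(f,f)(\tau)$, handled as the first-level $\Gamma_+$ term above together with an extra $k_w$-integration. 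For (ii) I truncate to $|v_*|,|\eta|,I_*,L\le N$ (complement absorbed by \eqref{kw esti}, costing $O(N^{-1/8})\sup\|h\|_{L^\infty}$), restrict to $\tau\le s-\lambda_1$ (short-time part crude), change variables $v_*\mapsto y:=x-(t-s)v-(s-\tau)v_*$ with Jacobian $(s-\tau)^{-3}\le\lambda_1^{-3}$, apply H\"older's inequality in $(v_*,\eta)$ and the $L^2$-bound \eqref{kw_l2} of $k_w$ to reduce to $\int_{\T^3}\int_{|\eta|\le N,L\le N}|f(\tau,y,\eta,L)|^2\,d\eta dL dy$, which is then estimated by the same dichotomy \eqref{F2}--\eqref{II2}. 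Collecting the free-data term, the $\bigl[\lambda_1+N^{-1}+C_{N,\lambda_1}\sqrt{\lambda_2}\bigr]$-type terms from the $\lambda_2$- and truncation-parts, and the $\sqrt{\mathcal{E}(F_0)}$-type terms from the entropy parts (each multiplied by the relevant powers of $\sup\|h\|_{L^\infty}$, up to cubic, coming from the factor of $\|h\|_{L^\infty}$ pulled out of $\Gamma_+$ combined with those produced in \eqref{II2}) yields \eqref{main estimate}.

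I expect the main obstacle to be the bookkeeping of the double iteration: one must arrange the time cutoffs $\lambda_1$ so that both streaming Jacobians $(t-s)^{-3}$ and $(s-\tau)^{-3}$ are integrable, keep careful track of which single factor of $\|h(s)\|_{L^\infty}$ is extracted from the quadratic $\Gamma_+$ (so that no power higher than cubic is created and the remaining $L^2$-integral is genuinely controlled by $\mathcal{E}(F_0)$ plus a $\lambda_2$-small term), and verify that every high-velocity region is absorbed by the decay of $k_w$ in \eqref{kw esti} rather than by a smallness one does not possess. Once this structure is set up, the individual estimates are the ones already performed in Lemmas~\ref{K esti}, \ref{GE}, \ref{relative entropy lemma} and in the proof of Lemma~\ref{Rf est}.
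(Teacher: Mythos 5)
Your overall blueprint (Duhamel, split the exponential at $\tilde t$, iterate Duhamel inside the $K_w$ term, invoke Lemma~\ref{GE} for $\Gamma_+$, and control the resulting spatial $L^2_x$ integrals by Lemma~\ref{relative entropy lemma}) is the paper's approach. However, there is a genuine gap in your treatment of the first-level $w\Gamma_+(f,f)$ term. You propose to ``apply H\"older's inequality in $v$'' and then ``perform the change of variables $v\mapsto y:=x-(t-s)v$'' on the truncated integral
\[
\int_{|v_*|\le N,\,I_*\le N}(1+|v_*|+\sqrt{I_*})^{-2\beta+8}\,\bigl|h\bigl(s,x-(t-s)v,v_*,I_*\bigr)\bigr|^2\,dv_*\,dI_*.
\]
But in this lemma $(t,x,v,I)$ is a \emph{fixed} evaluation point --- you are estimating $|h(t,x,v,I)|$ pointwise, so there is no $v$-integral available to trade against the streaming Jacobian. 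You appear to have transposed a step from the proof of Lemma~\ref{Rf est}, where the quantity of interest is $\int e^{-|v|^2/4-I/4}|h(t,x,v,I)|\,dv\,dI$ and hence a $(v,I)$-integration (with Maxwellian weight) is genuinely present. You even recognize the issue for the $K_w$ branch (``a single application of $k_w$ does not produce a spatial integral; here I substitute the Duhamel formula a second time''), but you do not apply the same remedy to the $\Gamma_+$ branch, where it is equally necessary.

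The correct fix, and what the paper does, is to apply Duhamel a second time to $h\bigl(s,x-(t-s)v,\eta,L\bigr)$ \emph{inside} the $L^2_{\eta,L}$ norm produced by Lemma~\ref{GE}. This splits the norm into $II_1+II_2+II_3$ (free data, short-time, and long-time contributions, the last with $K_w h$ and $w\Gamma_+(f,f)$ at time $\tau$), evaluated along the shifted characteristic $\hat X(\tau)=x-(t-s)v-(s-\tau)\eta$. Only then is there an $\eta$-integration (inherited from the $L^2_{\eta,L}$ norm of Lemma~\ref{GE}) against which the change of variables $\eta\mapsto y:=\hat X(\tau)$ can be performed, with Jacobian $(s-\tau)^{-3}$ controlled by the $\lambda_1$-cutoff. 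A related bookkeeping symptom: your argument attributes the factor $1+\int_0^t\|h(s)\|_{L^\infty}\,ds$ in \eqref{main estimate} to the free-data part of the $K_w$ iteration, but that part yields only $Ce^{\nu_0\tilde t}e^{-\frac{\nu_0}{2}t}\|h_0\|_{L^\infty}$ (cf.\ \eqref{kw case}); the time integral $\int_0^t\|h(s)\|_{L^\infty}\,ds$ actually arises from the free-data piece $II_1$ of the \emph{second} Duhamel applied to $\Gamma_+$, multiplied by the $\|h(s)\|_{L^\infty}$ that Lemma~\ref{GE} extracts --- a factor your version of the argument would fail to produce.
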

\begin{proof}
	Note that it follows from Lemma \ref{Rf est} that 
	\begin{align*}
		\mathcal{R}(f)(t,x,v,I) \geq 
		\begin{cases}
			0, \quad t \in [0,\tilde{t}], \\ 
			\frac{1}{2}\nu(v,I), \quad t \in [\tilde{t},\infty). 
		\end{cases}
	\end{align*}
	Thus, for all $t\in [0,T_0]$, one obtains that 
	\begin{align*}
		&e^{-\int_0^t \mathcal{R}(f)(\tau,x-(t-\tau)v,v,I) d\tau} \\
		&= e^{-\int_0^t \mathcal{R}(f)(\tau,x-(t-\tau)v,v,I) d\tau} \chi_{\{t \leq \tilde{t}\}}+ e^{-\int_0^{\tilde{t}} \mathcal{R}(f)(\tau,x-(t-\tau)v,v,I) d\tau}e^{-\int_{\tilde{t}}^t \mathcal{R}(f)(\tau,x-(t-\tau)v,v,I) d\tau} \chi_{\{t >\tilde{t}\}}\\
		&\leq e^{\frac{\nu_0}{2}\tilde{t}} e^{-\frac{\nu_0}{2}t} \chi_{\{t \leq \tilde{t}\}} + e^{\frac{\nu_0}{2}\tilde{t}} e^{-\frac{\nu_0}{2}t} \chi_{\{t > \tilde{t}\}}\\
		&\leq e^{\frac{\nu_0}{2}\tilde{t}} e^{-\frac{\nu_0}{2}t}. 
	\end{align*}
	Using the above fact, we have 
	\begin{align} \label{duhamel}
		\vert h(t,x,v,I) \vert &\leq e^{\frac{\nu_0}{2}\tilde{t}} e^{-\frac{\nu_0}{2}t} \Vert h_0 \Vert_{L^\infty} + \int_0^t e^{\frac{\nu_0}{2}\tilde{t}} e^{-\frac{\nu_0}{2}(t-s)} \vert K_w h (s) \vert  ds +  \int_0^t e^{\frac{\nu_0}{2}\tilde{t}} e^{-\frac{\nu_0}{2}(t-s)} \vert w \Gamma_+(f,f)(s) \vert ds. 	
	\end{align}
	Firstly, we treat the term containing $K_w h$ in \eqref{duhamel}. Using the definition \eqref{kw} of the operator $K_w$, we have 
	\begin{align*}
		\int_0^t e^{\frac{\nu_0}{2}\tilde{t}} e^{-\frac{\nu_0}{2}(t-s)} \vert K_w h (s) \vert  ds \leq  \int_0^t e^{\frac{\nu_0}{2}\tilde{t}} e^{-\frac{\nu_0}{2}(t-s)} \int_{\R^3 \times \R_+} \vert k_w (v,v_*, I,I_*) h(s,x-(t-s)v, v_*,I_*) \vert dv_* dI_* ds.
	\end{align*}
	We split it into several cases regarding $v_*$ and $I_*$.  \\ 
	
	\textbf{Case 1.} ($\vert v \vert \geq N$ or $I \geq N$) Using Lemma \ref{K esti}, we obtain
	\begin{align} \label{K case1}
		\begin{split}
			&\int_0^t e^{\frac{\nu_0}{2}\tilde{t}} e^{-\frac{\nu_0}{2}(t-s)} \int_{\R^3 \times \R_+} \vert k_w (v,v_*, I,I_*) h(s,x-(t-s)v, v_*,I_*) \vert (\chi_{\{\vert v \vert \geq N\}}+\chi_{\{I \geq N\}}) dv_* dI_* ds \\
			&\leq Ce^{\frac{\nu_0}{2} \tilde{t}}\left(\frac{1}{N}+ \frac{1}{N^{1/4}}\right) \sup_{0\leq s \leq t} \Vert h(s) \Vert_{L^{\infty}}. 
		\end{split}
	\end{align}
	In the subsequent cases, we only consider $\vert v \vert \leq N$ and $I \leq N$. \\
	
	\textbf{Case 2.} ($\vert v_* \vert \geq 2N$) In this case, due to $\vert v - v_* \vert \geq N$, we have
	\begin{align} \label{K case2}
		\begin{split}
			&\int_0^t e^{\frac{\nu_0}{2}\tilde{t}} e^{-\frac{\nu_0}{2}(t-s)} \int_{\R_+}\int_{\vert v_* \vert \geq 2N} \vert k_w (v,v_*, I,I_*) h(s,x-(t-s)v, v_*,I_*) \vert  dv_* dI_* ds \\
			&\leq  e^{-\frac{N^2}{32}} \int_0^t e^{\frac{\nu_0}{2}\tilde{t}} e^{-\frac{\nu_0}{2}(t-s)} \int_{\R_+}\int_{\vert v_* \vert \geq 2N} \vert k_w (v,v_*, I,I_*) h(s,x-(t-s)v, v_*,I_*) \vert e^{\frac{\vert v - v_* \vert ^2}{32}} dv_* dI_* ds \\
			& \leq C e^{-\frac{N^2}{32}} e^{\frac{\nu_0}{2} \tilde{t}} \sup_{0\leq s \leq t} \Vert h(s) \Vert_{L^{\infty}},		
		\end{split}
	\end{align}
	where we used Lemma \ref{K esti} in the last inequality.\\
	
	\textbf{Case 3.} ($\vert v_* \vert \leq 2N$ and $I_* \geq 2N$) Similar to Case 2, we use Lemma \ref{K esti} to get
	\begin{align} \label{K case3}
		\begin{split}
			&\int_0^t e^{\frac{\nu_0}{2}\tilde{t}} e^{-\frac{\nu_0}{2}(t-s)} \int_{I_* \geq 2N}\int_{\vert v_* \vert \leq 2N} \vert k_w (v,v_*, I,I_*) h(s,x-(t-s)v, v_*,I_*) \vert  dv_* dI_* ds\\
			&\leq \frac{1}{N^{1/8}} \int_0^t e^{\frac{\nu_0}{2}\tilde{t}} e^{-\frac{\nu_0}{2}(t-s)} \int_{I_* \geq 2N}\int_{\vert v_* \vert \leq 2N} \vert k_w (v,v_*, I,I_*) h(s,x-(t-s)v, v_*,I_*) \vert(1+I_*)^{1/8}  dv_* dI_* ds\\
			& \leq  \frac{1}{N^{1/8}} e^{\frac{\nu_0}{2} \tilde{t}} \sup_{0\leq s \leq t} \Vert h(s) \Vert_{L^{\infty}}. 
		\end{split}
	\end{align}
	Hence, we only consider the case $\{\vert v_* \vert \leq 2N, I_* \leq 2N\}$. \\
	
	\textbf{Case 4.}($\vert v_* \vert \leq 2N, I_* \leq 2N$)
	We will apply Duhamel's principle once again, and separate into the following three parts as shown below
	\begin{align} \label{kw case}
		\begin{split}
			&\int_0^t e^{\frac{\nu_0}{2}\tilde{t}} e^{-\frac{\nu_0}{2}(t-s)} \int_{I_* \leq 2N}\int_{\vert v_* \vert \leq 2N} \vert k_w (v,v_*, I,I_*) h(s,x-(t-s)v, v_*,I_*) \vert  dv_* dI_* ds\\ 
			&\leq Ce^{\nu_0 \tilde{t}} e^{-\frac{\nu_0}{2} t} \Vert h_0 \Vert_{L^\infty}  \\
			&\quad + e^{\nu_0 \tilde{t}}\int_0^t ds e^{-\frac{\nu_0}{2} (t-s)}\int_0^s ds'  e^{-\frac{\nu_0}{2} (s-s')} \iint_{ \vert v_* \vert \leq 2N,I_* \leq 2N} dv_* dI_*  \\
			&\quad \quad \times \iint_{\R^3 \times \R_+} dv_{**}dI_{**}\vert k_w (v,v_*, I,I_*) k_w (v_{*},v_{**},I_*,I_{**})h(s',x-(t-s)v-(s-s')v_*, v_{**},I_{**}) \vert  \\
			&\quad + e^{\nu_0 \tilde{t}}\int_0^t ds e^{-\frac{\nu_0}{2} (t-s)}\int_0^s ds'  e^{-\frac{\nu_0}{2} (s-s')} \iint_{\vert v_* \vert \leq 2N,I_* \leq 2N} dv_* dI_* \vert k_w (v,v_*, I,I_*) w \Gamma_+ (f,f)(s') \vert \\ 
			&: = I_1 + I_2 + I_3. 
		\end{split}
	\end{align}
	To estimate $I_2$, we similarly divide into several cases concerning $v_{**}$ and $I_{**}$. The following cases 
	\begin{align*}
		\{ \vert v_{**} \vert \geq 3N\} \quad \textrm{and} \quad \{ \vert v_{**} \vert \leq 3N, I_{**} \geq 3N\}
	\end{align*}
	can be treated in the same way as Case 2 and Case 3. Hence, we omit the above cases and focus on main case $\{ \vert v_{**} \vert \leq 3N, I_{**} \leq 3N \}$. Using Lemma \ref{K esti}, H\"{o}lder's inequality and the change of variables $y = x- (t-s)v - (s-s') v_*$, we obtain 
	\begin{align}\label{XI2}
		\begin{split}
			&I_2 \chi_{\{\vert v_{**} \vert \leq 3N, I_{**}\leq 3N\}} \\
			&\leq e^{\nu_0 \tilde{t}}\int_0^t ds e^{-\frac{\nu_0}{2}(t-s)}\left[\int_0^{s-\lambda_1} + \int_{s-\lambda_1}^s \right]ds' e^{-\frac{\nu_0}{2}(s-s')}\\
			&\quad \times \left(\int_{ \vert v_* \vert \leq 2N,\vert v_{**} \vert \leq 3N, I_* \leq 2N, I_{**} \leq 3N } \vert k_w (v,v_*, I,I_*) k_w (v_{*},v_{**},I_*,I_{**}) \vert ^2 dv_* dv_{**} dI_* dI_{**}\right)^{1/2}\\ 
			& \quad \times \left ( \int_{ \vert v_* \vert \leq 2N,\vert v_{**} \vert \leq 3N, I_* \leq 2N, I_{**} \leq 3N} \vert h(s' ,x-(t-s)v-(s-s')v_{*}, v_{**},I_{**})\vert ^2 dv_* dv_{**} dI_* dI_{**} \right)^{1/2}\\
			& \leq C_N e^{\nu_0 \tilde{t}} \int_0^t e^{-\frac{\nu_0}{2}(t-s)} \int_0^{s-\lambda_1} e^{-\frac{\nu_0}{2}(s-s')} \\
			&\quad \quad  \times \left ( \int_{y \in \T^3, \vert v_{**} \vert \leq 3N,I_{**} \leq 3N} \left(\frac{1+N(s-s')^3}{(s-s')^3}\right)\vert h(s' ,y, v_{**},I_{**})\vert ^2 dy dv_{**}dI_{**} \right)^{1/2} ds' ds\\
			&\quad + C \lambda_1 e^{\nu_0 \tilde{t}} \sup_{0\leq s \leq t}\Vert h(s) \Vert_{L^{\infty}} \\
			&\leq C_{N,\lambda_1} e^{\nu_0 \tilde{t}} \int_0^t \int_0^{s-\lambda_1} e^{-\frac{\nu_0}{2}(t-s')} \left( \int_{y \in \T^3, \vert v_{**} \vert \leq 3N,I_{**} \leq 3N} \vert f(s', y, v_{**},I_{**})\vert^2 dydv_{**}dI_{**}\right)^{1/2} ds'ds
			\\
			&\quad + C \lambda_1 e^{\nu_0 \tilde{t}} \sup_{0\leq s \leq t}\Vert h(s) \Vert_{L^{\infty}}. 
		\end{split}
	\end{align} 
	Using Lemma \ref{relative entropy lemma}, we compute the integral part of the last line in \eqref{XI2}
	\begin{align} \label{L2 entropy}
		\begin{split}
			&\int_{y \in \T^3, I_{**} \leq 3N, \vert v_{**}\vert \leq 3N} \vert f(s', y, v_{**},I_{**})\vert^2 dydI_{**}dv_{**} \\
			&=\left(\int_{y \in \T^3, \lambda_2 \leq I_{**} \leq 3N, \vert v_{**}\vert \leq 3N}+ \int_{y \in \T^3,  I_{**} \leq \lambda_2, \vert v_{**}\vert \leq 3N}\right) \vert f(s', y, v_{**},I_{**})\vert^2 dydI_{**}dv_{**} \\
			& \leq C\lambda_2 \sup_{0\leq s \leq t} \Vert h(s) \Vert _{L^\infty}^2 \\
			&\quad +\int_{y \in \T^3, \lambda_2 \leq I_{**} \leq 3N, \vert v_{**}\vert \leq 3N} \vert f(s', y, v_{**},I_{**})\vert^2 \left(\chi_{\{ \vert \sqrt{M_{**}}f_{**} \vert \leq M_{**}\}} + \chi_{\{\vert \sqrt{M_{**}}f_{**} \vert >M_{**}\}}\right)dydI_{**}dv_{**}\\
			&\leq C\lambda_2 \sup_{0\leq s \leq t} \Vert h(s) \Vert _{L^\infty}^2 + \mathcal{E}(F_0) \\
			&\quad+ \sup_{0\leq s \leq t} \Vert h(s) \Vert_{L^\infty} \int_{y \in \T^3, \lambda_2 \leq I_{**} \leq 3N, \vert v_{**}\vert \leq 3N} \frac{\sqrt{M_{**}}}{\sqrt{M_{**}}}\vert f(s', y, v_{**},I_{**})\vert \chi_{\{\vert \sqrt{M_{**}}f_{**} \vert > M_{**}\}}dydI_{**}dv_{**}\\ 
			& \leq C\lambda_2  \sup_{0\leq s \leq t} \Vert h(s) \Vert _{L^\infty}^2 + \mathcal{E}(F_0) + C_{N,\lambda_2}  \sup_{0\leq s \leq t} \Vert h(s) \Vert_{L^\infty}  \mathcal{E}(F_0).
		\end{split}
	\end{align}
	Thus, we can further estimate $I_2$ over $\{ \vert v_{**} \vert \leq 3N, I_{**} \leq 3N \}$: 
	\begin{align} \label{K I2}
		\begin{split}
			&I_2 \chi_{\{\vert v_{**} \vert \leq 3N, I_{**}\leq 3N\}} \\
			&\leq e^{\nu_0 \tilde{t}} \left[\left(C_{N,\lambda_1} \sqrt{\lambda_2} + C \lambda_1\right)  \sup_{0\leq s \leq t} \Vert h(s) \Vert_{L^{\infty}} +\sqrt{\mathcal{E}(F_0)}\left(C_{N,\lambda_1}+ C_{N,\lambda_1,\lambda_2} \sup_{0\leq s \leq t} \Vert h(s) \Vert_{L^{\infty}}^{1/2} \right)\right].
		\end{split}
	\end{align}
	Next, we use Lemma \ref{GE} to estimate $I_3$ in \eqref{kw case}, and then 
	\begin{align*}
		I_3 &\leq C e^{\nu_0 \tilde{t}}\int_0^t ds e^{-\frac{\nu_0}{2} (t-s)}\int_0^s ds'  e^{-\frac{\nu_0}{2} (s-s')} \iint_{\vert v_* \vert \leq 2N, I_* \leq 2N} dv_* dI_* \vert k_w (v,v_*, I,I_*) \vert \\ 
		& \quad \times \Vert wf(s') \Vert_{L^{\infty}} \left ( \int_{\R^3 \times \R_+} dv_{**} dI_{**} (1+ \vert v_{**} \vert + \sqrt{I_{**}})^{-2\beta+8}\vert h(s', x-(t-s)v-(s-s')v_{*}, v_{**},I_{**}) \vert ^2 \right)^{1/2}.
	\end{align*}
	Similar to the $K_w$ case, we focus on the main case $\{ \vert v_{**} \vert \leq 3N, I_{**} \leq 3N\}$. Using Lemma \ref{GE}, H\"{o}lder's inequality, the change of variables $y=x-(t-s)v - (s-s')v_*$, and \eqref{L2 entropy}, we get
	\begin{align} \label{K I3}
		\begin{split}
			&I_3 \chi_{\{\vert v_{**}\vert\leq 3N, I_{**}\leq 3N\}}\\
			&\leq C e^{\nu_0 \tilde{t}}\int_0^t ds e^{-\frac{\nu_0}{2} (t-s)}\left[\int_0^{s-\lambda_1} +\int_{s-\lambda_1}^s \right] ds'  e^{-\frac{\nu_0}{2} (s-s')} \iint_{\vert v_* \vert \leq 2N,I_* \leq 2N} dv_* dI_* \vert k_w (v,v_*, I,I_*) \vert \\ 
			& \quad \times \Vert h(s') \Vert_{L^{\infty}} \left ( \int_{\vert v_{**} \vert \leq 3N, I_{**}\leq 3N} dv_{**} dI_{**} (1+ \vert v_{**} \vert + \sqrt{I_{**}})^{-2\beta+8}\vert h(s', x-(t-s)v-(s-s')v_{*}, v_{**},I_{**}) \vert ^2 \right)^\frac{1}{2}\\
			& \leq  C_{N}e^{\nu_0 \tilde{t}} \sup_{0\leq s \leq t} \Vert h(s) \Vert_{L^{\infty}} \int_0^t ds e^{-\frac{\nu_0}{2}(t-s)}  \int_0^{s-\lambda_1} ds' e^{-\frac{\nu_0}{2}(s-s')} \\
			&\quad \quad \times \left( \int_{y \in \T^3, I_{**} \leq 3N, \vert v_{**}\vert \leq 3N} \left(\frac{1+N(s-s')^3}{(s-s')^3}\right)\vert f(s', y, v_{**},I_{**})\vert^2 dydI_{**}dv_{**}\right)^{1/2} \\
			&\quad + C \lambda_1 e^{\nu_0 \tilde{t}} \sup_{0\leq s \leq t}\Vert h(s) \Vert_{L^{\infty}}^2 \\ 
			&\leq e^{\nu_0 \tilde{t}} \left[ \left(C_{N,\lambda_1}\sqrt{\lambda_2} +C \lambda_1 \right)\sup_{0\leq s \leq t} \Vert h(s) \Vert_{L^{\infty}} ^2 + \sqrt{\mathcal{E}(F_0)} \left(C_{N,\lambda_1}\sup_{0\leq s \leq t} \Vert h(s) \Vert_{L^{\infty}} +C_{N,\lambda_1,\lambda_2} \sup_{0\leq s \leq t} \Vert h(s) \Vert_{L^{\infty}}^{3/2}\right)\right].
		\end{split}
	\end{align}
	Combining \eqref{K case1}, \eqref{K case2}, \eqref{K case3}, \eqref{kw case}, \eqref{K I2}, and \eqref{K I3}, we have
	\begin{align} \label{K final esti}
		\begin{split}
			&\int_0^t e^{\frac{\nu_0}{2}\tilde{t}} e^{-\frac{\nu_0}{2}(t-s)} \vert K_w h (s) \vert  ds\\ &\leq Ce^{\nu_0 \tilde{t}} e^{-\frac{\nu_0}{2} t} \Vert h_0 \Vert_{L^\infty} \\ 
			&\quad + e^{\nu_0\tilde{t}} \left(\sup_{0\leq s \leq t} \Vert h(s) \Vert_{L^{\infty}} + \sup_{0\leq s \leq t} \Vert h(s) \Vert_{L^{\infty}}^2 \right) \left[ \frac{C}{N^{1/8}} +\left(C_{N,\lambda_1}\sqrt{\lambda_2} + C\lambda_1 \right) \right]\\
			& \quad +e^{\nu_0 \tilde{t}} \sqrt{\mathcal{E}(F_0)}\left[ \left\{C_{N,\lambda_1}\left(1+\sup_{0\leq s \leq t} \Vert h(s) \Vert_{L^{\infty}}\right) +C_{N,\lambda_1,\lambda_2} \left(\sup_{0\leq s \leq t} \Vert h(s) \Vert_{L^{\infty}}^{1/2}+\sup_{0\leq s \leq t} \Vert h(s) \Vert_{L^{\infty}}^{3/2}\right)\right\}\right].
		\end{split}
	\end{align}
	From now on, we consider the part $w\Gamma_+(f,f)$. We apply Lemma \ref{GE} to obtain the estimate 
	\begin{align}\label{GG}
		\begin{split}
			&\int_{0}^{t}e^{\frac{\nu_0}{2}\tilde{t}}e^{-\frac{\nu_0}{2}(t-s)}|w(v,I)\Gamma_+(f,f)(s,x-(t-s)v,v,I)|ds \cr
			&\leq \int_{0}^{t}e^{\nu_0\tilde{t}}e^{-\frac{\nu_0}{2}(t-s)}\frac{C\|h(s)\|_{L^\infty}}{1+|v|+I^{1/4}}\left(\int_{\R^3 \times \R_{+}}\left(1+|\eta|+\sqrt{L}\right)^{-2\beta+8}|h(s,x-(t-s)v,\eta,L)|^2d\eta dL \right)^\frac{1}{2}ds.
		\end{split}
	\end{align}
	To further estimate \eqref{GG}, we separate into two parts as follows:
	\begin{align*}
		(1)\; &|\eta|\geq N \quad \text{or} \quad L\geq N, \cr
		(2)\; &|\eta|\leq N, L\leq N.
	\end{align*}
	For case $(1)$, by direct computation, we have
	\begin{align}\label{GGE1}
		\begin{split}
			&\int_{0}^{t}e^{\nu_0\tilde{t}}e^{-\frac{\nu_0}{2}(t-s)}|w(v,I)\Gamma_+(f,f)(s,x-(t-s)v,v,I)|ds\cr
			&\leq C\sup_{0\leq s \leq t}\|h(s)\|^2_{L^\infty}\int_{0}^{t}e^{\nu_0\tilde{t}}e^{-\frac{\nu_0}{2}(t-s)}\left(\int_{\R^3 \times \R_+}\frac{1}{\left(1+|\eta|+\sqrt{L}\right)^{2\beta-8}} \left(\chi_{\{|\eta| \geq N\}}+\chi_{\{L \geq N\}}\right)d\eta dL\right)^\frac{1}{2}ds \cr\\
			&\leq Ce^{\nu_0\tilde{t}}\sup_{0\leq s \leq t}\|h(s)\|^2_{L^\infty}\int_{0}^{t}dse^{-\frac{\nu_0}{2}(t-s)}\\
			&\quad \times \left(\int_{\R^3 \times \R_+}\frac{1}{\left(1+|\eta|+\sqrt{L}\right)^{2\beta-14}\Big(1+|\eta|\Big)^{7/2}\left(1+\sqrt{L}\right)^{5/2}} \left(\chi_{\{|\eta| \geq N\}}+\chi_{\{L \geq N\}}\right)d\eta dL\right)^\frac{1}{2} \\
			&\leq \frac{e^{\nu_0\tilde{t}}C_{\beta,\nu_0}}{N^{1/4}}\sup_{0\leq s \leq t}\|h(s)\|^2_{L^\infty},
		\end{split}
	\end{align}
	where the last inequality comes from $\beta > 7$. 
	To estimate case $(2)$, we use Duhamel's principle to get

	\begin{align}\label{GDE}
		\begin{split}
			\left(\int_{|\eta|\leq N, L\leq N}\left(1+|\eta|+\sqrt{L}\right)^{-2\beta+8}|h(s,x-(t-s)v,\eta,L)|^2d\eta dL\right)^{1/2}\leq II_1+II_2+II_3.
		\end{split}
	\end{align}
	Here, $II_1$, $II_2$, and $II_3$ are defined by
	{\footnotesize
		\begin{align*}
			II_1&:=e^{\nu_0\tilde{t}}\left(\int_{|\eta|\leq N,L\leq N}\left(1+|\eta|+\sqrt{L}\right)^{-2\beta+8}\left|e^{-\frac{\nu_0}{2}s} \| h_0 \|_{L^\infty} \right|^2d\eta dL\right)^\frac{1}{2},\cr
			II_2&:=e^{\nu_0\tilde{t}}\left(\int_{|\eta|\leq N, L\leq N}\left(1+|\eta|+\sqrt{L}\right)^{-2\beta+8}\left|\int_{s-\lambda_{1}}^{s}e^{-\frac{\nu_0}{2}(s-\tau)}\left|[K_wh+w\Gamma_+(f,f)](\tau,\hat{X}(\tau),\eta,L)\right|d\tau\right|^2d\eta dL\right)^\frac{1}{2},\cr
			II_3&:=e^{\nu_0\tilde{t}}\left(\int_{|\eta|\leq N, L\leq N}\left(1+|\eta|+\sqrt{L}\right)^{-2\beta+8}\left|\int_{0}^{s-\lambda_{1}}e^{-\frac{\nu_0}{2}(s-\tau)}\left|[K_wh+w\Gamma_+(f,f)](\tau,\hat{X}(\tau),\eta,L)\right|d\tau\right|^2d\eta dL\right)^\frac{1}{2},
		\end{align*}
	}
	where the $\hat{X}(\tau):=x-(t-s)v-(s-\tau)\eta$. For $II_1$ and $II_2$, it follows from Lemma \ref{K esti} and Lemma \ref{nonlinear esti} that 
	\begin{align}\label{GDEI1}
		\begin{split}
			II_1&\leq e^{\nu_0\tilde{t}}e^{-\frac{\nu_0}{2}s}\Vert h_0 \Vert_{L^\infty} \left(\int_{|\eta|\leq N, L\leq N}\left(1+|\eta|+\sqrt{L}\right)^{-2\beta+8}d\eta dL\right)^\frac{1}{2}\cr
			&\leq C_{\beta,\nu_0}e^{\nu_0\tilde{t}}e^{-\frac{\nu_0}{2}s}\Vert h_0 \Vert_{L^\infty},
		\end{split}
	\end{align}
	and
	\begin{align}\label{GDEI2}
		\begin{split}
			II_2\leq C_{\beta,\nu_0}e^{\nu_0\tilde{t}}\lambda_1\left[\sup_{0\leq \tau \leq s}\|h(\tau)\|_{L^\infty}+\sup_{0\leq \tau \leq s}\|h(\tau)\|^2_{L^\infty}\right].
		\end{split}
	\end{align}
	For $II_3$, we consider the following two separate parts:
	\begin{align}\label{II31}
		II_{31}:=e^{\nu_0\tilde{t}}\left(\int_{|\eta|\leq N, L\leq N}\left(1+|\eta|+\sqrt{L}\right)^{-2\beta+8}\left|\int_{0}^{s-\lambda_{1}}e^{-\frac{\nu_0}{2}(s-\tau)}\left|K_wh(\tau,\hat{X}(\tau),\eta,L)\right|d\tau\right|^2d\eta dL\right)^\frac{1}{2},
	\end{align}
	and
	{\small
		\begin{align}\label{II32}
			II_{32}:=e^{\nu_0\tilde{t}}\left(\int_{|\eta|\leq N, L\leq N}\left(1+|\eta|+\sqrt{L}\right)^{-2\beta+8}\left|\int_{0}^{s-\lambda_{1}}e^{-\frac{\nu_0}{2}(s-\tau)}\left|w\Gamma_+(f,f)(\tau,\hat{X}(\tau),\eta,L)\right|d\tau\right|^2d\eta dL\right)^\frac{1}{2}.
		\end{align}
	}
	For $II_{31}$, we separate into two parts in the following way:
	\begin{align}\label{GDEkwS}
		\begin{split}
			&\int_{0}^{s-\lambda_{1}}e^{-\frac{\nu_0}{2}(s-\tau)}\left|K_wh(\tau,\hat{X}(\tau),\eta,L)\right|d\tau\cr
			&\leq\int_{0}^{s-\lambda_{1}}e^{-\frac{\nu_0}{2}(s-\tau)}\int_{\R^3\times\R_+}\left|k_w(\eta,\eta_*,L,L_*)h(\tau,\hat{X}(\tau),\eta_*,L_*)\right|d\eta_*dL_*d\tau\cr
			&\leq\int_{0}^{s-\lambda_{1}}e^{-\frac{\nu_0}{2}(s-\tau)}\int_{\R^3 \times \R_+}\left |k_w(\eta,\eta_*,L,L_*)h(\tau,\hat{X}(\tau),\eta_*,L_*)\right|\left(\chi_{\{| \eta_* | \geq 2N\}}+\chi_{\{L_*  \geq 2N\}}\right)d\eta_*dL_*d\tau\cr\\
			&\quad + \int_{0}^{s-\lambda_{1}}e^{-\frac{\nu_0}{2}(s-\tau)}\int_{|\eta_*|\leq 2N,L_*\leq 2N} \left| k_w(\eta,\eta_*,L,L_*)h(\tau,\hat{X}(\tau),\eta_*,L_*)\right| d\eta_* dL_* d\tau \\ 
			&\leq \frac{C_{\nu_0}}{N^{1/8}}\sup_{0\leq \tau \leq s}\|h(\tau)\|_{L^\infty}\\
			&\quad+\int_{0}^{s-\lambda_{1}}e^{-\frac{\nu_0}{2}(s-\tau)}\int_{|\eta_*|\leq 2N,L_*\leq 2N}\left|k_w(\eta,\eta_*,L,L_*)h(\tau,\hat{X}(\tau),\eta_*,L_*)\right|d\eta_*dL_*d\tau,
		\end{split}
	\end{align}
	where the last inequality could be obtained by Lemma \ref{K esti} and $|\eta- \eta_*| \geq N$ under $| \eta | \leq N$. 
	Inserting \eqref{GDEkwS} into \eqref{II31} gives the following estimate for $II_{31}$: 
	{\small
		\begin{align}\label{II31-A}
			\begin{split}
				II_{31}&\leq  \frac{C_{\beta,\nu_0}e^{\nu_0\tilde{t}}}{N^{1/8}}\sup_{0\leq \tau \leq s}\|h(\tau)\|_{L^\infty}\cr
				&\quad+e^{\nu_0\tilde{t}}\left(\int_{|\eta|\leq N,L\leq N}\left|\int_{0}^{s-\lambda_{1}}e^{-\frac{\nu_0}{2}(s-\tau)}\int_{|\eta_*|\leq 2N,L_*\leq 2N}k_w(\eta,\eta_*,L,L_*)h(\tau,\hat{X}(\tau),\eta_*,L_*)d\eta_*dL_*d\tau\right|^2d\eta dL\right)^\frac{1}{2}.
			\end{split}
		\end{align}
	}
	The second term in the right-hand side of \eqref{II31-A} can be estimated by H\"{o}lder's  inequality and Lemma \ref{K esti} as follows:
	\begin{align}\label{KL2}
		\begin{split}
			&e^{\nu_0\tilde{t}}\left(\int_{|\eta|\leq N,L\leq N}\left|\int_{0}^{s-\lambda_{1}}e^{-\frac{\nu_0}{2}(s-\tau)}\int_{|\eta_*|\leq 2N,L_*\leq 2N}k_w(\eta,\eta_*,L,L_*)h(\tau,\hat{X}(\tau),\eta_*,L_*)d\eta_*dL_*d\tau\right|^2d\eta dL\right)^\frac{1}{2}\cr
			&\leq C_{\beta,\nu_0}e^{\nu_0\tilde{t}}\left(\int_{|\eta|\leq N,L\leq N}\int_{0}^{s-\lambda_{1}}e^{-\frac{\nu_0}{2}(s-\tau)}\int_{|\eta_*|\leq 2N,L_*\leq 2N}\left|k_w(\eta,\eta_*,L,L_*)\right|^2d\eta_*dL_*d\eta dLd\tau\right)^\frac{1}{2}\cr
			&\quad \times
			\left(\int_{|\eta|\leq N,L\leq N}\int_{0}^{s-\lambda_{1}}e^{-\frac{\nu_0}{2}(s-\tau)}\int_{|\eta_*|\leq 2N,L_*\leq 2N}|f(\tau,\hat{X}(\tau),\eta_*,L_*)|^2d\eta_*dL_*d\eta dLd\tau\right)^\frac{1}{2}\\
			&\leq C_{\beta,\nu_0,N}e^{\nu_0\tilde{t}}
			\left(\int_{0}^{s-\lambda_{1}}e^{-\frac{\nu_0}{2}(s-\tau)}\int_{|\eta|\leq N,L\leq N}\int_{|\eta_*|\leq 2N,L_*\leq 2N}|f(\tau,\hat{X}(\tau),\eta_*,L_*)|^2d\eta_*dL_*d\eta dLd\tau\right)^\frac{1}{2}.
		\end{split}
	\end{align}
	To complete the estimate, we use the change of variables $y:=\hat{X}(\tau)=x-(t-s)v-(s-\tau)\eta$ and the same method with \eqref{F2}, \eqref{KL2}. Thus, we have 
	\begin{align}\label{KL23}
		\begin{split}
			&C_{\beta,\nu_0,N}e^{\nu_0\tilde{t}}\left(\int_{0}^{s-\lambda_{1}}e^{-\frac{\nu_0}{2}(s-\tau)}\int_{|\eta|\leq N,L\leq N}\int_{|\eta_*|\leq 2N,L_*\leq 2N}|f(\tau,\hat{X}(\tau),\eta,L)|^2d\eta_*dL_*d\eta dLd\tau\right)^\frac{1}{2}\cr
			&\leq C_{\beta,\nu_0,N}e^{\nu_0\tilde{t}}\left[\left(\frac{1}{\lambda^3_{1}}+N\right)\left\{\lambda_{2}\sup_{0\leq \tau\leq s}\|h(\tau)\|^2_{L^\infty}+\mathcal{E}(F_0)+C\frac{e^{2N}}{\lambda_{2}^{(\delta-2)/4}}  \sup_{0\leq \tau \leq s} \Vert h(\tau) 	\Vert_{L^\infty}  \mathcal{E}(F_0)\right\}\right]^{\frac{1}{2}}.
		\end{split}
	\end{align}
	Hence, $II_{31}$ can be further controlled by 
	{\small
		\begin{align}\label{I3}
			\begin{split}
				II_{31}&\leq\frac{ C_{\beta,\nu_0}e^{\nu_0\tilde{t}}}{N^{1/8}}\sup_{0\leq \tau\leq s}\|h(\tau)\|_{L^\infty}\cr&\quad+C_{\beta,\nu_0,N}e^{\nu_0\tilde{t}}\left[\left(\frac{1}{\lambda^3_{1}}+N\right)\left\{\lambda_{2}\sup_{0\leq \tau\leq s}\|h(\tau)\|^2_{L^\infty}+\mathcal{E}(F_0)+C\frac{e^{2N}}{\lambda_{2}^{(\delta-2)/4}}  \sup_{0\leq \tau \leq s} \Vert h(\tau) 	\Vert_{L^\infty}  \mathcal{E}(F_0)\right\}\right]^{\frac{1}{2}}.
			\end{split}
		\end{align}
	}
	Similarly, for $II_{32}$ in \eqref{II32}, we can deal with the following terms by using Lemma \ref{GE} 
	\begin{align*}
		&\left|\int_{0}^{s-\lambda_{1}}e^{-\frac{\nu_0}{2}(s-\tau)}|w\Gamma_+(f,f)(\tau,\hat{X}(\tau),\eta,L)|d\tau\right|^2\cr
		&\leq		C\left |\int_{0}^{s-\lambda_{1}}e^{-\frac{\nu_0}{2}(s-\tau)}\|h(\tau)\|_{L^\infty}\left(\int_{\R^3 \times \R_+}\left(1+|\eta_*|+\sqrt{L_*}\right)^{-2\beta+8}|h(\tau,\hat{X}(\tau),\eta_*,L_*)|^2d\eta_*dL_*\right)^{1/2}d\tau\right|^2.
	\end{align*}
	Additionally, this term is divided into two parts similar to \eqref{GDEkwS}. Applying the method \eqref{GGE1} for $\{|\eta_*|\geq 2N \} \cup\{L_*\geq 2N \}$ parts yields that 
	{\small
		\begin{align}\label{I3G}
			\begin{split}
				II_{32}&\leq \frac{C_{\beta,\nu_0}e^{\nu_0\tilde{t}}}{N^{1/4}}\sup_{0\leq \tau \leq s}\|h(\tau)\|^2_{L^\infty}\cr
				&\quad+Ce^{\nu_0\tilde{t}}\sup_{0 \leq \tau \leq s }\|h(\tau)\|_{L^\infty}\left(\int_{0}^{s-\lambda_{1}}e^{-\frac{\nu_0}{2}(s-\tau)}\int_{|\eta|\leq N, L\leq N}\int_{|\eta_*|\leq 2N,L_*\leq 2N}|h(\tau,\hat{X}(\tau),\eta_*,L_*)|^2 d\eta_*dL_*d\eta dL d\tau\right)^\frac{1}{2}.
			\end{split}
		\end{align}}
	Using the change of variables $y:=\hat{X}(\tau)=x-(t-s)v-(s-\tau)\eta$ and the same method as \eqref{F2}, \eqref{I3G}, $II_{32}$ can be estimated by
	\begin{align}\label{I3GR}
		\begin{split}
			II_{32}&\leq \frac{C_{\beta,\nu_0}e^{\nu_0\tilde{t}}}{N^{1/4}}\sup_{0\leq \tau \leq s}\|h(\tau)\|^2_{L^\infty}+C_{\nu_0}e^{\nu_0\tilde{t}}\sup_{0\leq \tau \leq s}\|h(\tau)\|_{L^\infty}\\
			&\quad \times \left[\left(\frac{1}{\lambda^3_{1}}+N\right)\left\{\lambda_{2}\sup_{0\leq \tau\leq s}\|h(\tau)\|^2_{L^\infty}+\mathcal{E}(F_0)+C\frac{e^{2N}}{\lambda_{2}^{(\delta-2)/4}}  \sup_{0\leq \tau \leq s} \Vert h(\tau) 	\Vert_{L^\infty}  \mathcal{E}(F_0)\right\}\right]^{\frac{1}{2}}.
		\end{split}
	\end{align}
	Combining \eqref{I3} and \eqref{I3GR}, we can estimate $II_3$ as shown below:
	{\small
		\begin{align}\label{GDEI3}
			\begin{split}
				II_3&\leq \frac{C_{\beta,\nu_0}e^{\nu_0\tilde{t}}}{N^{1/8}}\left(\sup_{0\leq \tau \leq s}\|h(\tau)\|_{L^\infty}+\sup_{0\leq \tau \leq s}\|h(\tau)\|^2_{L^\infty}\right)\cr
				&\quad+ C_{\beta,\nu_0}e^{\nu_0\tilde{t}}\left\{\left(\frac{1}{\lambda^3_{1}}+N\right)\left(\lambda_{2}\sup_{0\leq s\leq t}\|h(s)\|^2_{L^\infty}+\mathcal{E}(F_0)+C\frac{e^{2N}}{\lambda_{2}^{(\delta-2)/4}}  \sup_{0\leq s \leq t} \Vert h(s) 	\Vert_{L^\infty}  \mathcal{E}(F_0)\right)\right\}^{\frac{1}{2}}\cr
				&\quad+ C_{\nu_0}e^{\nu_0\tilde{t}}\sup_{0\leq \tau \leq s}\|h(\tau)\|_{L^\infty}\\
				&\quad \quad \times \left\{\left(\frac{1}{\lambda^3_{1}}+N\right)\left(\lambda_{2}\sup_{0\leq s\leq t}\|h(s)\|^2_{L^\infty}+\mathcal{E}(F_0)+C\frac{e^{2N}}{\lambda_{2}^{(\delta-2)/4}}  \sup_{0\leq s \leq t} \Vert h(s) 	\Vert_{L^\infty}  \mathcal{E}(F_0)\right)\right\}^{\frac{1}{2}}.
			\end{split}
		\end{align}
	}
	By \eqref{GDEI1}, \eqref{GDEI2}, and \eqref{GDEI3}, we get the estimate for \eqref{GDE}. If we insert the estimate for \eqref{GGE1} and \eqref{GDE} into \eqref{GG}, then we control the part $w\Gamma_+(f,f)$ in this way:
	\begin{align} \label{GR}
		\begin{split}
			&\int_{0}^{t}e^{\nu_0\tilde{t}}e^{-\frac{\nu_0}{2}(t-s)}|w(v,I)\Gamma_+(f,f)(s,x-(t-s)v,v,I)|ds\cr
			&\leq C_{\beta,\nu_0}e^{\nu_0\tilde{t}}e^{-\frac{\nu_0}{2}t}\Vert h_0 \Vert_{L^\infty}\int_{0}^{t}\|h(s)\|_{L^\infty}ds\cr
			&\quad+C_{\beta,\nu_0}e^{\frac{5\nu_0}{4}\tilde{t}} \lambda_1 \left(\sup_{0\leq s \leq t}\|h(s)\|^2_{L^\infty}+\sup_{0\leq s\leq t}\|h(s)\|^3_{L^\infty}\right)+\frac{C_{\beta,\nu_0}e^{\nu_0\tilde{t}}}{N^{1/8}}\left(\sup_{0\leq s \leq t}\|h(s)\|^2_{L^\infty}+\sup_{0\leq s \leq t}\|h(s)\|^3_{L^\infty}\right)\cr
			&\quad+C_{N,\nu_0}e^{\frac{\nu_0}{4}\tilde{t}}\left(\sup_{0\leq s \leq t}\|h(s)\|_{L^\infty}+\sup_{0\leq s \leq t}\|h(s)\|^2_{L^\infty}\right)\\
			&\quad \quad \times \left\{\left(\frac{1}{\lambda^3_{1}}+N\right)\left(\lambda_{2}\sup_{0\leq s\leq t}\|h(s)\|^2_{L^\infty}+\mathcal{E}(F_0)+C\frac{e^{2N}}{\lambda_{2}^{(\delta-2)/4}}  \sup_{0\leq s \leq t} \Vert h(s) 	\Vert_{L^\infty}  \mathcal{E}(F_0)\right)\right\}^{\frac{1}{2}} \\ 
			&\leq C_{\beta,\nu_0}e^{\nu_0\tilde{t}}e^{-\frac{\nu_0}{2}t}\Vert h_0 \Vert_{L^\infty}\int_{0}^{t}\|h(s)\|_{L^\infty}ds\cr
			&\quad+C_{\beta,\nu_0}e^{\frac{5\nu_0}{4}\tilde{t}}\left(\sup_{0\leq s \leq t}\|h(s)\|^2_{L^\infty}+\sup_{0\leq s \leq t}\|h(s)\|^3_{L^\infty}\right)\left[\lambda_{1}+\frac{1}{N}+C_{N,\lambda_{1}}\sqrt{\lambda_{2}}\right]\cr
			&\quad+C_{N,\nu_0}e^{\nu_0\tilde{t}}\sqrt{\mathcal{E}(F_0)}\left(\sup_{0\leq s \leq t}\|h(s)\|_{L^\infty}+\sup_{0\leq s \leq t}\|h(s)\|^2_{L^\infty}\right)\left\{C_{N,\lambda_{1}}\left(1+C_{N,\lambda_{2}} \sup_{0\leq s \leq t} \Vert h(s) 	\Vert_{L^\infty}  \right)\right\}^{\frac{1}{2}}.
		\end{split}
	\end{align}
	In conclusion, from \eqref{duhamel},\eqref{K final esti}, and \eqref{GR}, it holds that:
	\begin{align*}
		|h(t,x,v,I)|&\leq \bar{C}e^{-\frac{\nu_0}{2}(t-\tilde{t})}\Vert h_0 \Vert_{L^\infty}\left(1+\int_{0}^{t}\|h(s)\|_{L^\infty}ds\right)\cr
		&\quad+\bar{C}e^{\frac{5\nu_0}{4}\tilde{t}}\left(\sup_{0\leq s \leq t}\|h(s)\|_{L^\infty}+\sup_{0\leq s \leq t}\|h(s)\|^2_{L^\infty}+\sup_{0\leq s \leq t}\|h(s)\|^3_{L^\infty}\right)\left[\lambda_{1}+\frac{1}{N}+C_{N,\lambda_{1}}\sqrt{\lambda_{2}}\right]\cr
		&\quad+\bar{C}e^{\nu_0 \tilde{t}} \sqrt{\mathcal{E}(F_0)}\\
		&\quad \quad \times \left[ \left\{C_{N,\lambda_1}\left(1+\sup_{0\leq s \leq t} \Vert h(s) \Vert_{L^{\infty}}\right) +C_{N,\lambda_1,\lambda_2} \left(\sup_{0\leq s \leq t} \Vert h(s) \Vert_{L^{\infty}}^{1/2}+\sup_{0\leq s \leq t} \Vert h(s) \Vert_{L^{\infty}}^{3/2}\right)\right\}\right]\cr
		&\quad+\bar{C}e^{\nu_0\tilde{t}}\sqrt{\mathcal{E}(F_0)}\\
		&\quad \quad \times \left(\sup_{0\leq s \leq t}\|h(s)\|_{L^\infty}+\sup_{0\leq s \leq t}\|h(s)\|^2_{L^\infty}\right)\left\{C_{N,\lambda_{1}}\left(1+C_{N,\lambda_{2}} \sup_{0\leq s \leq t} \Vert h(s) 	\Vert_{L^\infty}\right)\right\}^{\frac{1}{2}},
	\end{align*}
	where the constant $\bar{C}:=\max\{1,C_{\beta,\nu_0},C_{N,\nu_0}\}$. Therefore, the proof is completed.
\end{proof}

\section{Local-in-time existence in $L^{\infty}$}
\begin{lemma} \cite{DLpoly} \label{local existence} 
	Let $\beta>7$ in the velocity weight function $w(v,I)$. Suppose $F_0 = M(v,I) + \sqrt{M(v,I)} f_0 (x,v,I)$ $\geq 0$ and $\Vert wf_0 \Vert_{L^{\infty}} < \infty$. Then, there exists a positive time $\hat{t}_0$ such that the initial value problem \eqref{poly be} on the polyatomic Boltzmann equation has a unique solution $F(t,x,v,I) = M(v,I) + \sqrt{M(v,I)} f(t,x,v,I) \geq 0$ on $t \in [0,\hat{t}_0]$, satisfying 
	\begin{align*}
		\sup_{0\leq t \leq \hat{t}_0} \Vert wf(t) \Vert_{L^{\infty}} \leq 2 \Vert wf_0 \Vert_{L^{\infty}}.
	\end{align*} 
\end{lemma}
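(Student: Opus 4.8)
The plan is to construct the solution by a positivity-preserving Picard iteration in the weighted space $L^{\infty}_w$ on a short time interval and then pass to the limit. Recalling that the equation can be written as $\partial_t f + v\cdot\nabla_x f + \mathcal{R}(f)f = Kf + \Gamma_+(f,f)$, I would set $f^0 := 0$ and, for $n\ge 0$, let $f^{n+1}$ solve the linear problem
\[
\partial_t f^{n+1} + v\cdot\nabla_x f^{n+1} + \mathcal{R}(f^n)f^{n+1} = Kf^n + \Gamma_+(f^n,f^n),\qquad f^{n+1}\big|_{t=0}=f_0,
\]
which is solved explicitly along characteristics by Duhamel's principle. Writing $F^n := M + \sqrt M f^n$, a direct computation (using $Q(M,M)=0$) shows that $F^{n+1}$ satisfies $\partial_t F^{n+1} + v\cdot\nabla_x F^{n+1} + F^{n+1}\mathcal{R}(f^n) = Q_+(F^n,F^n)$ with $F^{n+1}|_{t=0}=F_0$, and that $\mathcal{R}(f^n)$ is exactly the loss frequency of $F^n$, hence $\mathcal{R}(f^n)\ge 0$ whenever $F^n\ge 0$. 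Since $F_0\ge 0$ and $Q_+(F^n,F^n)\ge 0$, an immediate induction using the Duhamel formula $F^{n+1}(t)=e^{-\int_0^t\mathcal{R}(f^n)}F_0(x-tv,\cdot)+\int_0^t e^{-\int_s^t\mathcal{R}(f^n)}Q_+(F^n,F^n)(s,x-(t-s)v,\cdot)\,ds$ shows $F^n\ge 0$ for all $n$.

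Next, with $h^n := wf^n$ and $M_1 := \|wf_0\|_{L^\infty}$, I would prove by induction that $\sup_{0\le t\le \hat t_0}\|h^n(t)\|_{L^\infty}\le 2M_1$ for a suitably small $\hat t_0 = \hat t_0(M_1)$. Multiplying the $f^{n+1}$-equation by $w$ and applying Duhamel, $|h^{n+1}(t)|\le e^{-\int_0^t\mathcal{R}(f^n)}|h_0|+\int_0^t e^{-\int_s^t\mathcal{R}(f^n)}\bigl(|K_w h^n|+|w\Gamma_+(f^n,f^n)|\bigr)(s)\,ds$, and the crucial point is that the positivity of $F^n$ gives $e^{-\int_s^t\mathcal{R}(f^n)}\le 1$, so the collision frequency $\nu$ hidden in $\mathcal{R}(f^n)$ never needs to be used. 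By Lemma~\ref{K esti} one has $|K_w h^n(s)|\le C\|h^n(s)\|_{L^\infty}$ (the kernel bound gives $\int |k_w|\,dv_* dI_*\le C$), and by Lemma~\ref{GE} together with $\int_{\R^3\times\R_+}(1+|v_*|+\sqrt{I_*})^{-2\beta+8}\,dv_*dI_*<\infty$ (valid for $\beta>7$) one has $|w\Gamma_+(f^n,f^n)(s)|\le C\|h^n(s)\|_{L^\infty}^2$. Hence $\|h^{n+1}(t)\|_{L^\infty}\le M_1+Ct\bigl(\sup_{[0,t]}\|h^n\|_{L^\infty}+\sup_{[0,t]}\|h^n\|_{L^\infty}^2\bigr)$, and choosing $\hat t_0$ so small that $C\hat t_0(2M_1+4M_1^2)\le M_1$ closes the induction.

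Finally, I would show that $\{h^n\}$ is Cauchy in $C([0,\hat t_0];L^\infty_w)$ by a contraction estimate: subtracting two consecutive iteration equations, $d^n := f^{n+1}-f^n$ satisfies a linear transport equation whose source is $Kd^{n-1}+\Gamma_+(d^{n-1},f^n)+\Gamma_+(f^{n-1},d^{n-1})$ plus the loss-difference term $-\bigl(\mathcal{R}(f^n)-\mathcal{R}(f^{n-1})\bigr)f^n$, each piece linear in $d^{n-1}$; estimating these as above, using the uniform bound $\|h^n\|_{L^\infty}\le 2M_1$, the kernel estimates of Lemma~\ref{K esti}, and the damping $e^{-\int_s^t\mathcal{R}(f^n)}\le 1$, yields $\sup_{[0,t]}\|wd^n\|_{L^\infty}\le C\hat t_0(1+M_1)\sup_{[0,t]}\|wd^{n-1}\|_{L^\infty}$; shrinking $\hat t_0$ further so that $C\hat t_0(1+M_1)\le\tfrac12$ gives a genuine contraction. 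The sequence then converges to a limit $f=h/w$ with $\sup_{[0,\hat t_0]}\|wf(t)\|_{L^\infty}\le 2\|wf_0\|_{L^\infty}$; passing to the limit in the Duhamel identity shows that $F=M+\sqrt M f$ is a mild solution of \eqref{poly be} on $[0,\hat t_0]$, and $F\ge 0$ as an $L^\infty$-limit of nonnegative functions. Uniqueness follows from the same difference estimate combined with Gr\"onwall's inequality. The main obstacle is the contraction estimate: the loss-difference term $-(\mathcal{R}(f^n)-\mathcal{R}(f^{n-1}))f^n$ carries the unbounded collision frequency, so one must arrange the kernel bounds (exploiting the Gaussian factor in $\sqrt M$ and the $K$-type structure of $\ell$), the exponential damping, and the shortness of $\hat t_0$ so that the resulting estimate stays within the weighted $L^\infty$ framework; the uniform bound from the previous step and the positivity of $F^n$ are both essential here.
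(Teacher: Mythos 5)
The paper does not prove Lemma~\ref{local existence}; it is cited directly from \cite{DLpoly}, so there is no in-paper argument to compare against. Your scheme is the standard positivity-preserving iteration, and the first two steps are sound: the algebraic identity $\partial_t F^{n+1}+v\cdot\nabla_x F^{n+1}+\mathcal{R}(f^n)F^{n+1}=Q_+(F^n,F^n)$ is correct (one has $\mathcal{R}(f^n)M=Q_-(M,F^n)$ and the cross terms assemble into $Q_+(F^n,F^n)$ via $Q(M,M)=0$), which gives $F^n\geq 0$ by induction; and the uniform bound $\sup_{[0,\hat t_0]}\|h^n\|_{L^\infty}\leq 2M_1$ closes correctly because you use $e^{-\int_s^t\mathcal{R}(f^n)}\leq 1$ together with the $\nu$-free bounds $\|K_w h^n\|_{L^\infty}\lesssim\|h^n\|_{L^\infty}$ (Lemma~\ref{K esti}) and $\|w\Gamma_+(f^n,f^n)\|_{L^\infty}\lesssim\|h^n\|_{L^\infty}^2$ (Lemma~\ref{GE} plus integrability of $(1+|v_*|+\sqrt{I_*})^{-2\beta+8}$ for $\beta>7$), so a factor of $\hat t_0$ appears in front of everything except the initial data.

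The contraction step, however, has a genuine gap. The loss-difference source in the $d^n$ equation is $-(\mathcal{R}(f^n)-\mathcal{R}(f^{n-1}))f^n=-\Gamma_-(f^n,d^{n-1})$, and the only available pointwise bound is of the form
\begin{align*}
|w\,\Gamma_-(f^n,d^{n-1})| \leq \|wf^n\|_{L^\infty}\,\|wd^{n-1}\|_{L^\infty}\int_{(\R^3\times\R_+)^3}W\,\frac{\sqrt{M_*}}{w_*(II_*)^{\delta/2-1}}\,dv_*dv'dv'_*dI_*dI'dI'_* \leq C\,\nu(v,I)\,\|wf^n\|_{L^\infty}\|wd^{n-1}\|_{L^\infty},
\end{align*}
and the $\nu(v,I)\sim(1+|v|+\sqrt I)^{2-\alpha}$ factor is unavoidable here, exactly as in Lemma~\ref{nonlinear esti}. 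In your Duhamel step you only invoke $e^{-\int_s^t\mathcal{R}(f^n)}\leq 1$ (all that positivity gives), so the resulting contribution is $\int_0^t C\nu(v,I)\|wf^n(s)\|_{L^\infty}\|wd^{n-1}(s)\|_{L^\infty}\,ds\lesssim\nu(v,I)\,\hat t_0\,M_1\sup\|wd^{n-1}\|$, which is unbounded in $(v,I)$; the asserted inequality $\sup\|wd^n\|_{L^\infty}\leq C\hat t_0(1+M_1)\sup\|wd^{n-1}\|_{L^\infty}$ does not follow. The usual fix --- retaining the $e^{-\nu(t-s)}$ damping and writing $\mathcal{R}(f^n)=\nu+P(f^n)$ --- does not help either, since $|P(f^n)|\lesssim\nu\|h^n\|_{L^\infty}$ with $\|h^n\|_{L^\infty}$ of order $M_1$ (large, by design of this paper), so $\mathcal{R}(f^n)\geq(1-CM_1)\nu$ is worthless. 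You flag this as ``the main obstacle'' and gesture at ``kernel bounds, exponential damping, and shortness of $\hat t_0$,'' but none of those, as invoked, removes the $\nu$-growth; something more is needed (e.g., establishing Cauchyness in a $\nu^{-1}$-weighted or otherwise weaker norm and then upgrading via the uniform $L^\infty$ bound, or proving a genuine pointwise lower bound $\mathcal{R}(f^n)\gtrsim\nu$ on $[0,\hat t_0]$ using the structure of $F^n$). As written, the Cauchy step --- and hence existence, uniqueness, and passage to the limit --- is not established.
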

\section {Global-in-time existence in $L^{\infty}$}
\begin{theorem} \cite{DLpoly} \label{small global}
	Assume the same conditions in Lemma \ref{local existence}. Suppose the conservation laws \eqref{laws} with $(M_I,J_I,E_I)=(0,0,0)$. 
	Then, there are constants $\kappa >0$ and $\vartheta>0$ such that if $\| wf_0 \| \leq \kappa$, the polyatomic Boltzmann equation \eqref{poly be} has a unique global mild solution $F(t,x,v,I) = M(v,I)+ \sqrt{M(v,I)} f(t,x,v,I) \geq 0$ satisfying 
	\begin{align*}
		\| wf(t) \|_{L^{\infty}} \leq C e^{-\vartheta t} \| wf_0 \|_{L^{\infty}}, \quad \forall t \geq 0,
	\end{align*}
	where $\vartheta>0$ is a positive constant.  
\end{theorem}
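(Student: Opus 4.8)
The plan is to run the now-standard continuity argument in the spirit of Guo's $L^2$--$L^\infty$ framework \cite{Guo10}: the local existence result (Lemma~\ref{local existence}) produces a short-time solution, and one closes globally by proving an a~priori estimate asserting that whenever a solution exists on $[0,t]$ with $\sup_{0\le s\le t}\|wf(s)\|_{L^\infty}$ below a fixed small threshold, it in fact obeys $\|wf(t)\|_{L^\infty}\le Ce^{-\vartheta t}\|wf_0\|_{L^\infty}$ with $C,\vartheta>0$ independent of the time horizon. Choosing $\kappa$ small enough that $C\kappa$ stays below the threshold, the solution never saturates the bound and hence extends to all $t\ge0$; uniqueness is inherited from Lemma~\ref{local existence}.

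First I would prove exponential decay in $L^2_{x,v,I}$. Testing the reformulated equation \eqref{re BE} against $f$ and invoking the coercivity of the linearized operator $L$ for the polyatomic collision kernel (established in \cite{BN2}), one obtains
\begin{align*}
	\frac12\frac{d}{dt}\|f(t)\|_{L^2}^2 + \delta_0 \|(\mathbf I-\mathbf P)f(t)\|_{\nu}^2 \le \langle \Gamma(f,f),f\rangle,
\end{align*}
where $\mathbf P$ denotes the $L^2$-projection onto the collision invariants $\{1,v,\tfrac{|v|^2}{2}+I\}\sqrt M$ and $\|\cdot\|_\nu$ the $\nu(v,I)$-weighted $L^2$-norm; the trilinear term is bounded by $C\|wf\|_{L^\infty}\big(\|(\mathbf I-\mathbf P)f\|_\nu^2+\|f\|_{L^2}^2\big)$ and is therefore absorbed once $\|wf\|_{L^\infty}$ is small. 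Since this estimate sees only the microscopic part, I would recover dissipation for the macroscopic coefficients $a(t,x),b(t,x),c(t,x)$ of $\mathbf Pf$ by extracting their evolution system from the fluid moments of \eqref{re BE}, bounding its source by $(\mathbf I-\mathbf P)f$ and the nonlinearity, and using the vanishing conservation laws \eqref{laws} together with the Poincar\'e inequality on $\T^3$ to control $\|\mathbf Pf\|_{L^2}$ by $\|(\mathbf I-\mathbf P)f\|_\nu$ up to higher-order terms. This yields a differential inequality $\tfrac{d}{dt}\mathcal G(t)+\lambda\mathcal G(t)\le 0$ for an energy functional $\mathcal G(t)\simeq\|f(t)\|_{L^2}^2$, hence $\|f(t)\|_{L^2}\le Ce^{-\lambda t}\|f_0\|_{L^2}$.

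Next comes the $L^2$--$L^\infty$ bootstrap. Along backward characteristics Duhamel's formula gives
\begin{align*}
	h(t,x,v,I) = e^{-\nu(v,I)t}h_0(x-tv,v,I) + \int_0^t e^{-\nu(v,I)(t-s)}\big[K_w h + w\Gamma(f,f)\big](s,x-(t-s)v,v,I)\,ds,
\end{align*}
with $h=wf$. The quadratic term is handled by Lemma~\ref{nonlinear esti}, $|w\Gamma(f,f)|\le C\nu\|wf\|_{L^\infty}^2$, whose smallness closes it. For $K_w h$ one iterates Duhamel once more: writing $K_w$ via its kernel $k_w$, the large-$(v_*,I_*)$ contribution is small by \eqref{kw esti}, and on the truncated region $k_w\in L^2$ by \eqref{kw_l2}, so after the change of variables $y=x-(t-s)v-(s-\tau)v_*$ (whose Jacobian is integrable away from $s=\tau$) the resulting double integral is controlled by a time-averaged $\|f(\tau)\|_{L^2}$, which decays like $e^{-\lambda\tau}$ by the previous step. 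Collecting the contributions produces $\|h(t)\|_{L^\infty}\le Ce^{-\vartheta t}\|h_0\|_{L^\infty}$ for some $\vartheta>0$, provided $\|h_0\|_{L^\infty}$ lies below a fixed small constant.

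The main obstacle is the macroscopic estimate underpinning the exponential $L^2$ decay: local coercivity of $L$ dissipates only $(\mathbf I-\mathbf P)f$, and recovering a closed decay estimate requires carefully treating the hydrodynamic system for $a,b,c$ on the torus and exploiting the zero conservation laws --- precisely the place where the polyatomic structure, with the internal energy entering the energy moment $\tfrac{|v|^2}{2}+I$, must be accounted for. Once the $L^2$ decay is secured, the $L^\infty$ bootstrap and the continuity argument are routine, which completes the proof.
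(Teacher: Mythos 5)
The paper does not prove Theorem~\ref{small global}; it is quoted verbatim from \cite{DLpoly} and used as a black box in the final stage of the proof of Theorem~\ref{main thm} (once $\|h(T_0)\|_{L^\infty}\le\kappa$, one appeals to \cite{DLpoly} to continue the solution for $t\ge T_0$ with exponential decay). Your sketch is the standard $L^2$--$L^\infty$ bootstrap of Guo's framework -- coercivity of $L$ on $(\mathbf I-\mathbf P)f$ from \cite{BN2}, a macroscopic estimate on the torus using the vanishing conservation laws to control $\mathbf Pf$, exponential $L^2$ decay, then two Duhamel iterations with the change of variables and the $L^2$ kernel bound \eqref{kw_l2} to propagate decay to the weighted $L^\infty$ norm, closed by a continuity argument starting from Lemma~\ref{local existence} -- and this is indeed the method \cite{DLpoly} uses for the small-amplitude polyatomic problem, so your proposal is consistent with the cited source.

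One point worth flagging: the macroscopic estimate is the place where the polyatomic structure genuinely departs from the monatomic case. The kernel of $L$ is spanned by $\{1,v_1,v_2,v_3,\tfrac{|v|^2}{2}+I\}\sqrt{M}$ with the internal-energy variable entering only through the last invariant, so the local conservation laws couple $c(t,x)$ to a moment of $f$ in $I$ as well as in $v$, and the test functions used to extract dissipation on $a,b,c$ must be chosen with the correct $I$-weights (and with $\Gamma(\delta/2)$-normalizations coming from $M$ in \eqref{mu}). Your proposal acknowledges this caveat but leaves it at the level of ``must be accounted for''; that is the one step that is not routine and where a complete proof would have to do nontrivial work specific to \eqref{mu}. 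With that caveat, the overall route is correct and matches \cite{DLpoly}.
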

\noindent \textbf{Proof of Theorem \ref{main thm}} 
For simplicity of notation, we restate \eqref{main estimate} in Lemma \ref{L infty est} as 
\begin{align} \label{main revised}
	\Vert h(t) \Vert _{L^{\infty}} \leq  \bar{C}e^{-\frac{\nu_0}{2}(t-\tilde{t})}\Vert h_0 \Vert_{L^\infty}\left(1+\int_{0}^{t}\|h(s)\|_{L^\infty}ds\right) +D,  
\end{align}
where 
\begin{align} \label{D}
	\begin{split}
		D&:= \bar{C}e^{\frac{5\nu_0}{4}\tilde{t}}\left(\sup_{0\leq s \leq t}\|h(s)\|_{L^\infty}+\sup_{0\leq s \leq t}\|h(s)\|^2_{L^\infty}+\sup_{0\leq s \leq t}\|h(s)\|^3_{L^\infty}\right)\left[\lambda_{1}+\frac{1}{N}+C_{N,\lambda_{1}}\sqrt{\lambda_{2}}\right]\cr
		&\quad +\bar{C}e^{\nu_0 \tilde{t}} \sqrt{\mathcal{E}(F_0)}\left[ \left\{C_{N,\lambda_1}\left(1+\sup_{0\leq s \leq t} \Vert h(s) \Vert_{L^{\infty}}\right) +C_{N,\lambda_1,\lambda_2} \left(\sup_{0\leq s \leq t} \Vert h(s) \Vert_{L^{\infty}}^{1/2}+\sup_{0\leq s \leq t} \Vert h(s) \Vert_{L^{\infty}}^{3/2}\right)\right\}\right]\cr
		&\quad +\bar{C}e^{\nu_0\tilde{t}}\sqrt{\mathcal{E}(F_0)}\left(\sup_{0\leq s \leq t}\|h(s)\|_{L^\infty}+\sup_{0\leq s \leq t}\|h(s)\|^2_{L^\infty}\right)\left\{C_{N,\lambda_{1}}\left(1+C_{N,\lambda_{2}} \sup_{0\leq s \leq t} \Vert h(s) 	\Vert_{L^\infty}\right)\right\}^{\frac{1}{2}}. 
	\end{split}
\end{align}
If we define 
\begin{align*}
	Z(t):= 1+ \int_0^t \Vert h(s) \Vert_{L^{\infty}} ds, 
\end{align*}
then it follows from \eqref{main revised} that 
\begin{align*}
	Z'(t) \leq \bar{C}e^{-\frac{\nu_0}{2}(t-\tilde{t})}M_0 Z(t) +D, 
\end{align*}
whenever $\Vert h_0 \Vert_{L^{\infty}} \leq M_0$. 
Multiplying each side of the equation above by $\exp\left \{ - \frac{2\bar{C}}{\nu_0} e^{\frac{\nu_0}{2} \tilde{t}} M_0 (1-e^{-\frac{\nu_0}{2}t})\right \}$, we obtain 
\begin{align*}
	\left(Z(t)\exp\left \{ - \frac{2\bar{C}}{\nu_0} e^{\frac{\nu_0}{2} \tilde{t}} M_0 (1-e^{-\frac{\nu_0}{2}t})\right \} \right)' \leq D, \quad \forall t \in [0,T_0].
\end{align*}
Integrating over the time interval $[0,t]$, we get 
\begin{align} \label{proof est 1}
	\begin{split}
		Z(t) &\leq (1+Dt) \exp\left \{  \frac{2\bar{C}}{\nu_0} e^{\frac{\nu_0}{2} \tilde{t}} M_0 (1-e^{-\frac{\nu_0}{2}t}) \right \} \\
		&\leq (1+Dt)\exp\left \{  \frac{2\bar{C}}{\nu_0} e^{\frac{\nu_0}{2} \tilde{t}} M_0  \right \}. 
	\end{split}
\end{align}
Next, substituting \eqref{proof est 1} into \eqref{main revised} yields that 
\begin{align} \label{proof est 2}
	\Vert h(t) \Vert_{L^{\infty}} \leq \bar{C}M_0\exp\left \{ \frac{\nu_0}{2} \tilde{t}+ \frac{2\bar{C}}{\nu_0} e^{\frac{\nu_0}{2} \tilde{t}}M_0\right\} (1+Dt) e^{-\frac{\nu_0}{2}t} + D, 
\end{align}
for all $0\leq t \leq T_0$. We define 
\begin{align} \label{Mbar}
	\bar{M} := 4\bar{C}M_0\exp\left \{ \frac{\nu_0}{2} \tilde{t}+ \frac{2\bar{C}}{\nu_0} e^{\frac{\nu_0}{2} \tilde{t}}M_0\right\},
\end{align}
and 
\begin{align} \label{T0}
	T_0 := \frac{4}{\nu_0} \ln \frac{\bar{M}}{\kappa},
\end{align}
for $0<\kappa <1$. Using \eqref{proof est 2} and the definition of $\bar{M}$ in \eqref{Mbar}, we have 
\begin{align*}
	\Vert h(t) \Vert_{L^\infty} \leq \frac{1}{4} \bar{M} (1+Dt) e^{-\frac{\nu_0}{2} t} +D  \leq \frac{1}{4} \bar{M} (1+2D) e^{-\frac{\nu_0}{4}t} +D, 
\end{align*}
where we have used $t e^{-\frac{\nu_0}{4} t} \leq 2$. In \eqref{D}, we first take $\lambda_1 =\lambda_1(\bar{M})>0$ small enough, then $N= N(\lambda_1, \bar{M})>0$ sufficiently large, and then $\lambda_2 = \lambda_2 (\lambda_1 ,N, \bar{M})>0$ sufficiently small. Finally, let $\mathcal{E}(F_0) \leq \varepsilon = \varepsilon(\lambda_1,\lambda_2,N, \bar{M})$ sufficently small, so that 
\begin{align*}
	D \leq \min\left \{\frac{\bar{M}}{8}, \frac{\kappa}{4}, \frac{1}{4} \right\}.
\end{align*} 
Therefore, 
\begin{align} \label{proof est 3}
	\Vert h(t) \Vert_{L^{\infty}} \leq \frac{3}{8} \bar{M}+ \frac{1}{8} \bar{M}= \frac{1}{2} \bar{M},
\end{align}
for all $0\leq t \leq T_0$, which implies that a priori assumption \eqref{HC} is closed.

Next, we will extend the solution to the Boltzmann equation to the time interval $[0,T_0]$ using \eqref{proof est 3} and Lemma \ref{local existence}. By Lemma \ref{local existence} and the definition \eqref{Mbar} of $\bar{M}$, the solution to the Boltzmann equation exists on the interval $[0,\hat{t}_0]$ and satisfies 
\begin{align*}
	\sup_{0\leq t \leq \hat{t}_0} \Vert h(t) \Vert_{L^{\infty}} \leq 2 \Vert h_0 \Vert_{L^{\infty}} \leq \frac{1}{2} \bar{M}. 
\end{align*}
Setting $\hat{t}_0$ as the initial time, Lemma \ref{local existence} provides the local existence time $\tilde{t} = \hat{t}_0 (\bar{M}/2)$ satisfying
\begin{align*}
	\sup_{\hat{t}_0 \leq t \leq \hat{t}_0 + \tilde{t}} \Vert h(t) \Vert_{L^{\infty}} \leq 2 \Vert h(\hat{t}_0)\Vert_{L^{\infty}} \leq \bar{M}. 
\end{align*}
Since a priori assumption \eqref{HC} holds for $t \in [0, \hat{t}_0 + \tilde{t}]$, the estimate \eqref{proof est 3} is also satisfied for $t \in [0, \hat{t}_0  +\tilde{t}]$. Hence, it follows from \eqref{proof est 3} that 
\begin{align*}
	\sup_{0\leq t \leq \hat{t}_0 + \tilde{t}} \Vert h(t) \Vert_{L^{\infty}} \leq \frac{1}{2} \bar{M}. 
\end{align*}
By repeating the procedure up to $T_0$, the solution $h(t)$ to the Boltzmann equation uniquely exists for $t\in [0,T_0]$ and satisfies \eqref{proof est 3}. From definition \eqref{T0} of $T_0$ and the estimate \eqref{proof est 2}, one obtains that
\begin{align*}
	\Vert h(T_0) \Vert_{L^{\infty}} \leq \frac{3}{8} \kappa + \frac{1}{4} \kappa < \kappa, 
\end{align*}
because of $D \leq \frac{\kappa}{4}$. For $t \in [T_0, \infty)$, Theorem \ref{small global} gives the global well-posedness and expoential decay property of the solution to the Boltzmann equation with $\Vert h_0 \Vert \leq \kappa $. Thus, treating $\Vert h(T_0) \Vert_{L^{\infty}}$ as initial data in Theorem \ref{small global} yields that 
\begin{align*}	
	\Vert h(t) \Vert_{L^{\infty}} \leq C e^{-\vartheta (t- T_0)} \Vert h(T_0) \Vert_{L^{\infty}}, \quad \forall t \geq T_0. 
\end{align*}
\newline 
\noindent {\bf Acknowledgement:}
G.-H. Ko and S.-J. Son are supported by the National Research Foundation of Korea (NRF)
grant funded by the Korean government (MSIT) (RS-2023-00219980 and RS-2023-00210484).
S.-J. Son is supported by the National Research Foundation of Korea (NRF) grant funded by the
Korean government (MSIT) (RS-2023-00212304).

\end{document}